\newtheorem{Theorem}{Theorem}[section]
\newtheorem{Definition}{Definition}[section]
\newtheorem{Proposition}[Theorem]{Proposition}
\newtheorem{Lemma}[Theorem]{Lemma}
\newtheorem{Corollary}[Theorem]{Corollary}
\newtheorem{Claim}[Theorem]{Claim}
\newtheorem{Remark}[Theorem]{Remark}
\newtheorem{theoremalph}{Theorem}
\newtheorem{corollaryalph}{Corollary}
\def\DD{{\mathbb D}}
\def\LL{{\mathbb L}}
 \def\NN{{\mathbb N}} 
\def\PP{{\mathbb P}}
 \def\RR{{\mathbb R}}
 \def\ZZ{{\mathbb Z}}
\def\cA{{\mathcal A}}    
\def\cB{{\mathcal B}}  \def\cH{{\mathcal H}} \def\cN{{\mathcal N}} 
   \def\cP{{\mathcal P}} 
  \def\cK{{\mathcal K}}  
\def\cF{{\mathcal F}}  \def\cL{{\mathcal L}}
\begin{document}

\title[SRB measures for some diffeomorphisms]{SRB measures for some diffeomorphisms with dominated splittings as zero noise limits}

\author{Zeya Mi}
\date{\today}

\begin{abstract}
In this paper, we provide a technique result on the existence of Gibbs $cu$-states for diffeomorphisms with dominated splittings. More precisely, for given $C^2$ diffeomorphim $f$ with dominated splitting $T_{\Lambda}M=E\oplus F$ on an attractor $\Lambda$, by considering some suitable random perturbation of $f$, we show that for any zero noise limit of ergodic stationary measures, if it has positive integrable Lyapunov exponents along invariant sub-bundle $E$, then its ergodic components contain Gibbs $cu$-states associated to $E$. With this technique, we show the existence of SRB measures and physical measures for some systems exhibitting dominated splittings and weak hyperbolicity.
\end{abstract}
\thanks{Z. Mi was partially supported by NSFC 11801278 and The Startup Foundation for Introducing Talent of NUIST(Grant No. 2017r070)}

\maketitle

\section{Introduction and main results}\label{se1}
Our aim of this paper is to show the existence of SRB (or physical) measures for some systems beyond hyperbolicity. The existence is based on a criterion established to get Gibbs $cu$-sates by techniques of adding random perturbations.

SRB measures were discovered by Sinai, Ruelle and Bowen in 1970s for uniformly hyperbolic systems \cite{Sin72,BoR75,Bow75,Rue76}.
After that, the SRB theory rapidly became a central topic of dynamical systems. Beyond the uniform hyperbolicity, up to now, there is no systematic approach for the construction of SRB measures, it remains to be a challenging problem in the ergodic theory of dynamical systems(see \cite{v2,pa} for related conjectures). An invariant Borel probability is said to be an SRB measure if it has positive Lyapunov exponents almost everywhere and its disintegration along Pesin unstable manifolds are absolutely continuous w.r.t. Lebesgue measures. SRB measures can also charactered by the fact they have positive Lyapunov exponents and satisfy the Pesin entropy formula \cite{LeY85}. Sometimes, SRB measures are expect to be physical observable \cite{ER}, known as \emph{physical measures},  i.e., the sets of generic points have positive Lebesgue measure on the manifold (e.g. see \cite{y02} for a survey).

As for the construction of SRB (or physical) measures one should refer the Gibbs $u$-states and Gibbs $cu$-states. Gibbs $u$-states were introduced and studied by Pesin and Sinai \cite{ps82} for partially hyperbolic diffeomorphisms admitting strong unstable sub-bundles, which requires the absolute continuity of conditional measures along strong unstable manifolds(see \cite[Chapter 11]{bdv} for more details). In \cite{ABV00}, Alves, Bonatti and Viana proposed the notion of Gibbs $cu$-states (see Definition \ref{DCU}) for diffeomorphisms with dominated splittings as a nonuniform notion of Gibbs $u$-states. It turns out that Gibbs $u$-states and Gibbs $cu$-states are crucial candidates of SRB measures and physical measures(e.g. \cite{ABV00, bv, CY05, AV15, MCY17}).

In this paper, we deal with the problem of the existence of SRB measures for some systems exhibitting dominated splitting. The key idea is to add small random noise to the deterministic dynamical system and prove that as noise levels tends to zero, the limit of the \emph{ergodic} stationary measures, called the \emph{randomly ergodic limit}(see Definition \ref{randomlylimit}), has ergodic components to be Gibbs $cu$-states associated to some sub-bundle $E$ whenever this randomly ergodic limit appears some weak expansion along $E$.  Under some extra assumptions on the other directions of the sub-bundles, one can make these Gibbs $cu$-states to be SRB measures or physical measures.

Let $f$ be a $C^2$ diffeomorphism on a compact Riemannian manifold $M$. Let $\Lambda$ be an \emph{attractor} for $f$ with the \emph{trapping region} $U$, i.e., $\Lambda$ is a compact $f$-invariant subset of $M$, and $U$ is the open neighborhood of $\Lambda$ such that
$f(\overline{U})\subset U$ and $\Lambda=\cap_{n\ge 0}f^n(U)$.
We say $\Lambda$ admits a \emph{dominated splitting}, if there exists a $Df$-invariant splitting $T_{\Lambda}M=E\oplus F $ and constant $0<\tau<1$ such that
$\|Df|_{F(x)}\|\le \tau \|Df^{-1}|_{E(f(x))}\|$ for any $x\in \Lambda$.
In this case, we say $E$ dominates $F$, and denote it by $E\oplus_{\succ} F$.

\begin{theoremalph}\label{m2}
Let $f$ be a $C^2$ diffeomorphism with an attractor $\Lambda$ admitting a dominated splitting $T_{\Lambda} M=E\oplus_{\succ} F$. If $\mu$ is a randomly ergodic limit supported on $\Lambda$ such that
\begin{itemize}
\item $\displaystyle{\int \log \|Df^{-1}|_E\|^{-1}{\rm d}\mu>0}$;
\bigskip
\item $\lim_{n\to +\infty}\frac{1}{n}\log \|Df^{n}|_{F(x)}\|\le 0$ for $\mu$-almost every $x$.
\end{itemize}
Then there exist ergodic components of $\mu$ as SRB measures.
\end{theoremalph}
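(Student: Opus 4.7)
The plan is to apply the paper's main technical criterion producing Gibbs $cu$-states as randomly ergodic limits, and then upgrade them to SRB measures by using the non-positivity of exponents along $F$.

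First I would invoke the criterion advertised in the abstract: since $\mu$ is a randomly ergodic limit supported on $\Lambda$ satisfying $\int \log \|Df^{-1}|_E\|^{-1}\, d\mu > 0$, a positive-measure set of ergodic components of $\mu$ are Gibbs $cu$-states associated to $E$ in the sense of Definition \ref{DCU}. Fix one such ergodic component $\nu$. By definition, $\nu$ is $f$-invariant and ergodic, has strictly positive Lyapunov exponents along $E$, and its conditional measures along a family of local $cu$-disks tangent to $E$ are absolutely continuous with respect to the induced Lebesgue measure.

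Next I would analyse the Oseledets splitting of $\nu$. The second hypothesis $\lim_{n\to+\infty} \tfrac{1}{n}\log \|Df^n|_{F(x)}\| \le 0$, valid $\mu$-a.e. and hence $\nu$-a.e., forces every $\nu$-Lyapunov exponent along $F$ to be non-positive. Combined with strict positivity along $E$, this identifies the Pesin unstable subspace at $\nu$-typical points with $E$. A standard graph-transform argument exploiting the domination $E\oplus_{\succ} F$ then shows that Pesin unstable manifolds are tangent to $E$ and locally coincide with $cu$-disks from the Gibbs $cu$-state. Consequently the absolute continuity along $cu$-disks transfers to absolute continuity of conditional measures along Pesin unstable manifolds, which, together with positive exponents on $E$ and non-positive exponents on $F$, makes $\nu$ an SRB measure.

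The main obstacle is this last identification: checking that the $cu$-disks appearing in the Gibbs $cu$-state definition actually saturate Pesin unstable leaves at $\nu$-typical points, and that the absolute continuity structure passes through any holonomy used in the identification. This is a standard consequence of the domination $E\oplus_{\succ} F$ together with the non-positivity of exponents along $F$, but it is precisely where the second hypothesis of the theorem is genuinely used; without it, the $cu$-disks could carry non-trivial unstable directions coming from $F$, and the identification with Pesin unstable manifolds would fail.
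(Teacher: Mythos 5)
Your proposal is correct and follows essentially the same route as the paper: apply Theorem \ref{main} to obtain a positive-measure set of ergodic components that are Gibbs $cu$-states, observe via the ergodic decomposition that almost every component has non-positive exponents along $F$, and intersect the two sets so that for such a component the Oseledets unstable subspace equals $E$, whence $W^u=W^{E,u}$ and the Gibbs $cu$-state property is literally the SRB property. The only phrasing to tighten is that a full-$\mu$-measure condition passes to $\nu$-a.e. only for $\mu$-almost every ergodic component $\nu$, so you should pick $\nu$ in the intersection of the two sets rather than fixing it beforehand; also, no holonomy or graph-transform argument is actually needed for the identification, since the two definitions coincide leafwise once the $F$-exponents are non-positive.
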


By adjusting the condition on $F$-direction one can deduce the following corollary.
\begin{corollaryalph}\label{cp}
Under the assumption of Theorem \ref{m2}, if we have
$$\lim_{n\to +\infty}\frac{1}{n}\log \|Df^{n}|_{F(x)}\|< 0$$
for $\mu$-almost every $x$,
then there exist physical measures in the ergodic components of $\mu$.
\end{corollaryalph}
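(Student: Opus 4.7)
The plan is to upgrade the SRB measures produced by Theorem \ref{m2} to physical measures by exploiting the strict negativity of the exponents along $F$. First I would invoke Theorem \ref{m2} to extract an ergodic component $\nu$ of $\mu$ which is an SRB measure associated with $E$. Since the ergodic decomposition is compatible with Birkhoff integration, the two hypotheses on $\mu$ pass to $\nu$ for almost every component, so $\nu$-almost every point has positive Lyapunov exponents along $E$ and, by the strengthened assumption of the corollary, strictly negative Lyapunov exponents along $F$. In particular $\nu$ is a hyperbolic ergodic measure in the sense of Pesin.

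Next, on a full $\nu$-measure set $R$ of regular points, Pesin's stable manifold theorem provides local stable manifolds $W^s_{\mathrm{loc}}(x)$ tangent to $F(x)$, while the SRB property (being a Gibbs $cu$-state with positive exponents along $E$) yields local unstable manifolds $W^u_{\mathrm{loc}}(x)$ tangent to $E(x)$ on which the disintegration of $\nu$ is equivalent to Lebesgue. Writing $B(\nu)$ for the ergodic basin of $\nu$, the crucial observation is that the whole global stable manifold $W^s(x)$ lies in $B(\nu)$ for every $x\in R$: any two orbits on a common stable leaf get exponentially close, hence they share the same Birkhoff averages for continuous observables.

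Then I would run the classical saturation argument linking SRB measures to physical measures. Pick a regular point $y\in R$ and a small Pesin block $\Lambda_\ell$ through $y$; on $W^u_{\mathrm{loc}}(y)$ the conditional measure of $\nu$ is absolutely continuous and charges $\Lambda_\ell\cap W^u_{\mathrm{loc}}(y)$ positively, so there is a positive Lebesgue measure subset of $W^u_{\mathrm{loc}}(y)$ contained in $R$. Saturating this set by the local stable lamination and using the absolute continuity of the stable holonomy (between two transversals to the stable foliation inside a Pesin block), one obtains a subset of $M$ of positive Lebesgue measure entirely contained in $B(\nu)$, so $\nu$ is a physical measure.

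The main technical point is the absolute continuity of the stable holonomy in this nonuniform setting: the splitting $E\oplus F$ is only dominated, not hyperbolic on $F$, so one cannot invoke the uniform Hirsch--Pugh--Shub theory and must instead appeal to Pesin's absolute continuity theorem applied to $\nu$, which requires strict negativity of the exponents along $F$ at $\nu$-a.e.\ point, precisely the hypothesis added in the corollary. Once this is in place, the above saturation argument delivers the physical measure and concludes the proof.
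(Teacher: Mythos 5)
Your proposal is correct and follows essentially the same route as the paper, which deduces the corollary from Theorem \ref{m2} together with the absolute continuity of the Pesin stable lamination (citing Young's survey for the standard saturation argument). You have simply written out in detail the argument the paper leaves implicit, correctly identifying the role of the strict negativity of the exponents along $F$ in making the stable holonomy available.
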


The existence of SRB measures and physical measures obtained above mainly based the following tool, which focus on the existence of Gibbs $cu$-states by using the method of adding random perturbations.

\begin{theoremalph}\label{main}
Let $f$ be a $C^2$ diffeomorphism with an attractor $\Lambda$ admitting a dominated splitting $T_{\Lambda} M=E\oplus F$. If $\mu$ is a randomly ergodic limit supported on $\Lambda$ such that
$$
\int \log \|Df^{-1}|_E\|^{-1}{\rm d}\mu>0,
$$
then there exist ergodic components of $\mu$ as Gibbs cu-states associated to $E$.
\end{theoremalph}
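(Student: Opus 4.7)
The strategy is to realize the zero-noise limit $\mu$ as a weak-$\ast$ accumulation point of Cesàro averages of Lebesgue-like measures supported on disks tangent to a center-unstable cone field around $E$, with uniformly bounded densities. Once this is achieved, the standard Alves-Bonatti-Viana criterion for Gibbs $cu$-states (see \cite{ABV00} and \cite[Ch.~11]{bdv}) forces every ergodic component of $\mu$ to be a Gibbs $cu$-state associated with $E$.

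First I would extend the dominated splitting $E \oplus F$ to a continuous $Df$-invariant splitting on a neighborhood $U'\subset U$ of $\La$, and fix small continuous cone fields $\cC^E$ and $\cC^F$ around $E$ and $F$ on $U'$ that are respectively forward-invariant under $Df$ and backward-invariant under $Df^{-1}$. For sufficiently small noise levels, the same cone fields are invariant under every perturbation $f_\omega$ appearing in the random dynamics. Let $\mu_{\epsilon_k} \to \mu$ with each $\mu_{\epsilon_k}$ an ergodic stationary measure at noise level $\epsilon_k \to 0$. Since $x \mapsto \log \|Df^{-1}|_{E(x)}\|^{-1}$ is continuous on a neighborhood of $\La$, weak-$\ast$ convergence together with the hypothesis $\int \log\|Df^{-1}|_E\|^{-1}\,d\mu > 0$ yield $\int \log\|Df^{-1}|_E\|^{-1}\,d\mu_{\epsilon_k} > c$ for some fixed $c > 0$ and all large $k$.

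The second step combines the random Birkhoff ergodic theorem applied to the cocycle $\log\|Df^{-1}|_{E(x)}\|^{-1}$ with Pliss' lemma (which applies since the integrand is bounded on a neighborhood of $\La$). This produces, for a positive-$\mu_{\epsilon_k}$-measure set of base points $x$ and a positive-probability set of noise realizations $\underline\omega$, a sequence $\{n_i\}$ of integers of positive lower density at which the finite-time random cocycle along $E$ has enjoyed an average expansion of at least $e^{c n_i/2}$. Fix a small $\cC^E$-admissible disk $D$ through each such base point. At every such ``$c$-random hyperbolic time'' $n_i$, the image of $D$ under the random composition is again tangent to $\cC^E$ (by cone invariance), has inner radius bounded below by some $\de_0>0$, and supports a push-forward of Lebesgue measure on $D$ with uniformly bounded density ratios with respect to leaf-Lebesgue on the image disk. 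The distortion control rests on the $C^2$ regularity of the $f_\omega$'s (uniform in small $\epsilon_k$) together with the Hölder regularity of $E$ along $\cC^E$-disks, which is supplied by domination.

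Next, the ergodicity of $\mu_{\epsilon_k}$ for the random skew product identifies $\mu_{\epsilon_k}$ with a weak-$\ast$ limit of Cesàro averages of push-forwards of ${\rm Leb}|_D$ along random orbits; restricting these averages to hyperbolic-time contributions does not discard a definite fraction of the mass. A diagonal argument in $k$ then exhibits $\mu$ as a weak-$\ast$ accumulation of averages of Lebesgue-like measures on disks tangent to $\cC^E$, pushed forward by \emph{deterministic} iterates of $f$, because the random compositions converge uniformly on compact time windows to powers of $f$ as $\epsilon_k \to 0$. The uniform distortion estimate survives this limit, and the Alves-Bonatti-Viana criterion then concludes that every ergodic component of $\mu$ is a Gibbs $cu$-state associated with $E$.

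The chief technical obstacle is the uniformity required in the second step: ensuring that the bounded-distortion constants and the lower bound on the density of hyperbolic times can be chosen uniformly in the noise level $\epsilon_k$ as $\epsilon_k \to 0$. This demands a careful quantitative version of Pliss' lemma for the random cocycle along $E$, leveraging the uniform lower bound $c$ on the integrated expansion, together with a uniform treatment of the $F$-direction holonomies afforded by domination. A secondary subtlety is that ergodicity of each $\mu_{\epsilon_k}$ does not pass to $\mu$, which is precisely why the statement only produces Gibbs $cu$-states among the ergodic components of $\mu$, and not for $\mu$ itself.
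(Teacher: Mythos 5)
Your plan --- realize $\mu$ as a limit of averaged push-forwards of Lebesgue measure on cone-admissible disks and invoke the Alves--Bonatti--Viana criterion --- is not the paper's route, and it contains a gap I do not see how to close. The step ``the ergodicity of $\mu_{\epsilon_k}$ for the random skew product identifies $\mu_{\epsilon_k}$ with a weak-$\ast$ limit of Ces\`aro averages of push-forwards of ${\rm Leb}|_D$'' is unjustified: ergodicity of the stationary measure gives genericity for $\mu_{\epsilon_k}\times\nu_{\epsilon_k}^{\NN}$-almost every initial condition, and even combined with $\mu_{\epsilon_k}\ll{\rm Leb}_M$ (hypothesis \textbf{(H2)}) this only produces a positive Lebesgue measure subset of the \emph{manifold}; it says nothing about ${\rm Leb}_D$-almost every point of a fixed lower-dimensional disk $D$. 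The same defect infects your Pliss step: you obtain hyperbolic times for a positive $\mu_{\epsilon_k}$-measure set of base points, whereas the ABV distortion-and-accumulation machinery of \cite{ABV00} needs positive density of hyperbolic times for a positive ${\rm Leb}_D$-measure subset of an admissible disk. The theorem's hypothesis only controls the integrated exponent of the limit measure $\mu$, and the paper explicitly stresses (in its comparison with \cite{AV15,MCY17}) that avoiding the construction of a positive Lebesgue measure set of non-uniformly expanding points is the whole point of its argument. A second, independent problem is the diagonal argument: the Ces\`aro averages for $\mu_{\epsilon_k}$ run over time windows of length $n\to\infty$ for each fixed $k$, while $f^n_{\underline\omega}\to f^n$ uniformly only on bounded time windows, so one cannot interchange $n\to\infty$ with $\epsilon_k\to 0$ to replace random compositions by deterministic iterates. (Also, your opening claim that the criterion would force \emph{every} ergodic component of $\mu$ to be a Gibbs $cu$-state is too strong; only those components charging the relevant disks can be expected to be.)

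For contrast, the paper uses \textbf{(H2)} in a different way: since $\mu_{\epsilon_k}\ll{\rm Leb}$, the lifted measure $\mu_{\epsilon_k}^G$ is automatically a \emph{random} Gibbs $cu$-state (Lemma \ref{rGibbs}, via \cite{LiQ95}), so its conditional measures on random unstable disks over a uniform Pesin block $\cH^G_{\lambda_0}$ have densities bounded above and below uniformly in $k$ (Lemma \ref{fde}). A Pliss-type argument along backward orbits (Theorem \ref{main2}) gives $\liminf_k\mu_{\epsilon_k}^G(\cH^G_{\lambda_0})>0$; a foliated chart (Proposition \ref{forc}) then permits passing the uniform density bounds to the limit, yielding absolute continuity of $\mu$ along a partition into $\sigma$-backward-contracting unstable disks (Theorem \ref{bac}); finally Proposition \ref{pe} extracts Gibbs $cu$-states among the ergodic components. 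To repair your proposal you would need to replace the push-forward-of-Lebesgue-on-disks mechanism with an argument that exploits the absolute continuity of the stationary measures themselves, which is essentially what the paper does.
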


In the previous work of Cowieson-Young \cite{CY05}, they showed the existence of SRB measures as zero noise limits in partially hyperbolic attractors with one-dimensional centers. The underlying principle there is that if the zero noise limit  (accumulation point of stationary measures as noise levels tends zero) has the no mixed behavior and the system is random entropy expansiveness, then it satisfies the Pesin entropy formula by passing limit of the random Pesin entropy formula.
With the similar token, but using asymptotically entropy expansiveness property instead, Liu-Lu \cite{LiL15} obtained SRB measures by showing Pesin entropy formula holds for every zero noise limit for some partially hyperbolic attractors. Note that without using the tool of random perturbations, \cite{CY} got the existence of invariant measures satisfying Pesin entropy formula for more general dynamical systems (see \cite{ZYC} also).

In contrast to their strategy, in the proof of Theorem \ref{main} or Theorem \ref{m2}, we do not involve the discussion on Pesin entropy formula, and different to them, we consider special zero noise limits---randomly ergodic limits, and take advantage of this modification, as \emph{ergodic} stationary measures inherit more information from their limit than stationary measures. Indeed, as a central step, we show that under the assumptions of our theorems on $E$-direction, one can get some uniform behaviors with respect to these ergodic stationary measures, which guarantee that the randomly ergodic limit admits the absolute continuity property along a family of unstable manifolds tangent to $E$ (see Theorem \ref{bac} in $\S$\ref{se4}).

%
%
%
%
%
In recently, \cite{AV15, MCY17} showed the existence (and finiteness) of SRB/physical measures on mostly expanding diffeomorphisms. Recall that in both works, the authors make effort to show the existence of positive Lebesgue measure set of (weakly) non-uniformly expanding points, then in terms of the previous techniques from \cite{ADLP17} or \cite{ABV00} to find SRB measures. In $\S$\ref{se6}, as a byproduct of our result, we provide a proof of the existence of SRB measures for systems considered in \cite{MCY17} (it works for \cite{AV15} with simpler arguments). We would like to mention that by our new arguments the partially hyperbolic splittings can be restricted to attractors, rather than the whole manifold, which is pivotal there to find the (weakly) non-uniformly expanding points.

The remainder of this paper is organized as follows. In $\S$\ref{se2}, we recall the definitions of SRB measures and Gibbs $cu$-states. In $\S$\ref{se3}, we study the dynamics under \emph{regular} random perturbations. $\S$\ref{se4} is dedicated to show that any randomly ergodic limit of
Theorem \ref{main} has the absolute continuity property along a family of local unstable disks. In $\S$\ref{se5}, we prove a technique result which asserts the arising of Gibbs $cu$-states from ergodic components of some invariant measures. Finally, the proofs of Theorem \ref{m2} and Theorem \ref{main} are provided in $\S$\ref{se6}, in which some applications of them are also studied.

\bigskip
{\bf{Acknowledgements.}} The author is greatly appreciated for the discussion
with Professor Dawei Yang.

\section{SRB measures and Gibbs cu-states}\label{se2}

Let $M$ be a compact Riemannian manifold, use ${\rm Leb}$ represent the Lebesgue measure of $M$. Given a sub-manifold $\gamma\subset M$, denote by ${\rm Leb}_{\gamma}$ the Lebesgue measure on $\gamma$ induced by the restriction of the Riemannian structure to $\gamma$. Let $d$ denote the distance in $M$, and $\rho$ the distance in the Grassmannian bundle of $TM$ generated by the Riemannian metric. Denote by ${\rm Diff}^2(M)$ the space of $C^2$ diffeomorphisms on $M$.

Given diffeomorphism $f$ on $M$, let $E$ be a $Df$-invariant sub-bundle and $\mu$ an $f$-invariant measure, we define the \emph{integrable Lyapunov exponent along $E$} of $\mu$ by
$$\lambda_E(\mu)=\int \log \|Df^{-1}|_E\|^{-1}{\rm d}\mu.$$
%
%

Given $f\in {\rm Diff}^2(M)$, let $\Lambda$ be an attractor admitting the dominated splitting $T_{\Lambda}M=E\oplus_{\succ} F$. Since the distributions $E$ and $F$ are continuous, we may extend them continuously to some trapping region $U$ of $\Lambda$, denoted by $E$ and $F$ as well. Given $a>0$, define the $E$-direction \emph{cone field}
$\mathcal{E}_a=\{\mathcal{E}_a(x): x\in U\}$ of width $a$ by
$$
\mathcal{E}_a(x)=\left\{\nu=\nu_E+\nu_F \in E(x)\oplus F(x): \|v_F\|\le a\|v_E\|\right\}
$$
for every $x\in U$.
We say a smooth embedded sub-manifold $D$ is \emph{tangent to} $\mathcal{E}_a$ if ${\rm dim}D={\rm dim}E$ and $T_xD\subset \mathcal{E}_a(x)$ for any $x\in D$.

\smallskip

Let $(X,\cA, \mu)$ be a probability space, we say $\cP$ is a \emph{measurable partition} of $X$, when there exists a sequence of countable partitions $\{\cP_k: k\in \NN\}$ of $X$ such that $\cP=\bigvee_{k=0}^{\infty}\cP_k$ (mod 0).
By Rokhlin's disintegration theorem \cite{R62} (see \cite[Appendix C.4]{bdv} also), for measurable partition $\cP$, there exists a unique family of \emph{conditional measures} $\{\mu_{P}: P \in \cP\}$ of $\mu$ w.r.t. $\cP$ such that $\mu_{P}(P)=1$ for $\hat{\mu}$-almost every $P\in \cP$, and for any measurable set $A$, $P\mapsto \mu_{P}(A)$ is measurable with $\mu(A)=\int \mu_P(A){\rm d}\hat{\mu}$, where $\hat{\mu}$ is the quotient measure of $\mu$ w.r.t. $\cP$.


\begin{Definition}\label{def:abs}
Let $\mu$ be a Borel probability and $\cP$ a measurable partition formed by disjoint smooth sub-manifolds, whose union admits positive $\mu$ measure. We say $\mu$ has \emph{absolutely continuous conditional measures} along $\cP$, if the conditional measures $\{\mu_{\gamma}:\gamma \in \cP\}$ of $\mu$ (restricted to $\cup_{\gamma\in \cP}\gamma$) w.r.t. $\cP$ are absolutely continuous w.r.t. corresponding Lebesgue measures on these sub-manifolds, i.e., $\mu_{\gamma}\ll {\rm Leb}_{\gamma}$\footnote{We write $``\mu_1 \ll \mu_2 "$ to denote that $\mu_1$ is absolutely continuous w.r.t. $\mu_2$}  for $\hat{\mu}$-almost every $\gamma\in \cP$.
\end{Definition}

Let $\mu$ be an $f$-invariant Borel probability that has positive Lyaunov exponents almost everywhere, then there exist a family of Pesin unstable manifolds $W^u(x)$ for $\mu$-almost every $x$ (see \cite{bp02} for Pesin theory). A measurable partition $\xi$ is said to be \emph{subordinate to} $W^u$ if for $\mu$-almost every $x$,
\begin{itemize}
\item $\xi(x)\subset W^u(x)$, where $\xi(x)$ is the element of $\xi$ containing $x$,
\item $\xi(x)$ contains an open neighborhood of $x$ in $W^u(x)$.
\end{itemize}
In particular, for diffeomorphism $f$ with dominated splitting of type $E\oplus_{\succ} F$, if $\mu$ is a Borel probability that has positive Lyapunov exponents along $E$, then there exists a Pesin unstable manifold $W^{E,u}(x)$ tangent to $E$ at $x$ with dimension ${\rm dim}E$, for
$\mu$-almost every $x$. Thus, measurable partitions subordinate to $W^{E,u}$ can be defined similarly.

Now we recall the definition of SRB measures, following \cite{y02,ER}:

\begin{Definition}\label{Def:SRB}
Let $f$ be a $C^2$ diffeomorphism on $M$, an $f$-invariant Borel probability $\mu$ is an SRB measure if
$\mu$ has positive Lyapunov exponents almost everywhere, and it has absolutely continuous conditional measures along any measurable partitions subordinate to $W^u$.
\end{Definition}

\begin{Definition}\label{DCU}
Assume that $f$ is a $C^2$ diffeomorphism with an attractor $\Lambda$ admitting a dominated splitting $T_{\Lambda} M=E\oplus F$.

An $f$-invariant Borel probability $\mu$ supported on $\Lambda$ is called a Gibbs cu-state (associated to $E$) if the Lyapunov exponents of $\mu$ along  $E$ are positive and
$\mu$ has absolutely continuous conditional measures along any measurable partition subordinate to $W^{E,u}$.
\end{Definition}

We remark that to check the absolute continuity property for SRB measures and Gibbs $cu$-states, it suffices to verify it for one (not all) measurable partition subordinate to Pesin unstable manifolds(see e.g. \cite[Chapter IV: Remark 2.1]{LiQ95}).


%


\section{Random perturbations}\label{se3}
\subsection{Regular random perturbations}

In this work, we use the random perturbation model of iterations of random maps, the strategy is to consider random orbits generated by an independent and identically distributed random choice of map at each iteration (see \cite{v97}, \cite{Alv03},\cite[Appendix D.2]{bdv}).
Given $f\in {\rm Diff}^2(M)$,  take a metric space $\Omega$ and define the continuous map $\mathscr{F}$ from $\Omega$ to ${\rm Diff}^2(M)$:
\begin{align*}
\mathscr{F}:~~\Omega &  ~~\longrightarrow ~~{\rm Diff}^2(M)\\ ~~\omega &~~ \longmapsto ~~f_{\omega}
\end{align*}
with property $\mathscr{F}(\omega_f)=f$ for some $\omega_f \in \Omega$.

For each $x\in M$, define $\ell_x:\Omega \to M$ such that $\ell_x(\omega)=\cF(\omega)(x)$ for every $\omega \in \Omega$.
Let us introduce a sequence of Borel probability measures $\{\nu_{\varepsilon}\}_{\varepsilon>0}$ on $\Omega$ satisfying:
\begin{itemize}
	\item[\bf{(H1}):] $\{\rm supp(\nu_{\varepsilon})\}_{\varepsilon>0}$ form a nested family of compact connected sets such that
    ${\rm supp}(\nu_{\varepsilon})\rightarrow \{\omega_{f}\},$
	when $\varepsilon\rightarrow 0$.
    \item[\bf{(H2}):]  For each $\varepsilon>0$, $(\ell_x)_{\ast}\nu_{\varepsilon}\ll {\rm Leb}$ for each $x\in M$, where $(\ell_x)_{\ast}\nu_{\varepsilon}(A)=\nu(\ell_x^{-1}A)$ for any measurable subset $A$.
\end{itemize}


\begin{Definition}
We refer to $\mathscr{R}_f=\{\mathscr{F}, (\nu_{\varepsilon})_{\varepsilon>0}\}$ with $\mathscr{F}$ having above settings and $\{\nu_{\varepsilon}\}_{\varepsilon>0}$ satisfying $\bf{(H1)}$ and $\bf{(H2)}$
as a regular random perturbation of $f$. Given $\varepsilon>0$, $\mathscr{R}_f^{\varepsilon}=\{\mathscr{F},\nu_{\varepsilon}\}$ is denoted to be the $\varepsilon$-regular random perturbation of $f$ with $\varepsilon$ as a noise level.
\end{Definition}

\begin{Remark}
We will also use $\mathscr{R}_f=\{\mathscr{F}, (\nu_n)_{n\in \NN}\}$ to denote a regular random perturbation of $f$ if $\{\nu_n\}_{n\in \NN}$ satisfies $\bf{(H1)}$ and $\bf{(H2)}$ in discrete sense: ${\rm supp}(\nu_n)\to \{\omega_f\} $ as $n\to +\infty$, and $(\ell_x)_{\ast} \nu_n\ll {\rm Leb}$ for every $x\in M$ and $n\in \NN$.
\end{Remark}

Let us mention that the regular random perturbations of $C^2$ diffeomorphisms are always exist, see \cite[Example 5.2]{Alv03} for details.
\bigskip

For each $\underline{\omega}=(\cdots, \omega_{-1},\omega_{0},\omega_{1}\cdots)\in \Omega^{\mathbb{Z}}$,
we use the presentation
$$
f_{\underline{\omega}}^n=\left\{
\begin{array}{cl}
  f_{\omega_{n-1}}\circ\cdots \circ f_{\omega_{0}} & ~~n>0~; \\
  id &~~ n=0~;\\
  f_{\omega_n}^{-1}\circ\cdots \circ f_{\omega_{-1}}^{-1} &~~n<0~.
\end{array}\right.
$$
As an important idea to investigate the dynamics under random perturbations, it is useful to introduce the two-sided skew-product map
\begin{align*}
G: \Omega^{\mathbb{Z}}\times M & ~~\longrightarrow ~~\Omega^{\mathbb{Z}}\times M \\
(\underline{\omega}~,~x) &~~\longmapsto ~~(\sigma(\underline{\omega}),f_{\omega_0}(z)).
\end{align*}
where $\sigma$ is the left shift operator.
Let us define the projection maps
\begin{align*}
\PP: \Omega^{\mathbb{Z}}\times M & ~~\longrightarrow ~~\Omega^{\mathbb{N}}\times M \\
(\underline{\omega}~,~x) &~~\longmapsto ~~(\underline{\omega}^{+}~, ~x)
\end{align*}
\begin{align*}
\PP_M: \Omega^{\mathbb{Z}}\times M & ~~\longrightarrow ~~M \\
(\underline{\omega}~,~x) &~~\longmapsto ~~x
\end{align*}
where $\underline{\omega}^{+}=(\omega_0,\omega_1,\cdots)\in \Omega^{\mathbb{N}}$ for $\underline{\omega}=(\cdots,\omega_{-1},\omega_0,\omega_1,\cdots)\in \Omega^{\mathbb{Z}}$.

\begin{Definition}
Given a Borel probability $\nu$ on $\Omega$, we say $\mu$ is a stationary measure for $\nu$ if
$$
\mu(A)=\int \mu(f_{\omega}^{-1}A)d\nu(\omega)
$$
for every Borel subset $A$.
\end{Definition}

For given regular random perturbation $\mathscr{R}_f=\{\mathscr{F}, (\nu_{\varepsilon})_{\varepsilon>0}\}$, one says that $\mu$ is a stationary measure for $\mathscr{R}_f$ if it is a stationary measure for some $\nu_{\varepsilon}$. Furthermore, we say $\mu$ is a stationary measure for $\mathscr{R}_f^{\varepsilon}=\{\mathscr{F}, \nu_{\varepsilon}\}$ as long as it is a stationary measure for $\nu_{\varepsilon}$.
Note that when $\nu$ is reduced to $\delta_{\omega_f}$, then the stationary measures for $\nu$ are $f$-invariant measures. Moreover, we have the following result, see \cite[Lemma 5.5]{Alv03} for a proof.

\begin{Lemma}
Let $\mathscr{R}_f=\{\mathscr{F},(\nu_{\varepsilon})_{\varepsilon>0}\}$ be a random perturbation of $f$. If $\mu_{\varepsilon}$ is a stationary measure for $\nu_{\varepsilon}$ for any $\varepsilon>0$, then all the accumulation points of $\{\mu_{\varepsilon}\}_{\varepsilon>0}$ as $\varepsilon\to 0$ are $f$-invariant measures.
\end{Lemma}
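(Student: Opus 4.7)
The plan is to verify invariance by testing against continuous functions and passing to the limit in the stationary equation. Fix an accumulation point $\mu$ of $\{\mu_{\varepsilon}\}_{\varepsilon>0}$, so there exists a sequence $\varepsilon_n\to 0$ with $\mu_{\varepsilon_n}\to \mu$ in the weak-$\ast$ topology. Given a continuous test function $\phi:M\to\RR$, the stationarity of $\mu_{\varepsilon_n}$ for $\nu_{\varepsilon_n}$ rewrites, by Fubini and approximating $\phi$ by simple functions, as
\begin{equation*}
\int \phi\,\mathrm{d}\mu_{\varepsilon_n} \;=\; \int\!\!\int \phi(f_{\omega}(x))\,\mathrm{d}\mu_{\varepsilon_n}(x)\,\mathrm{d}\nu_{\varepsilon_n}(\omega).
\end{equation*}
The left-hand side converges to $\int\phi\,\mathrm{d}\mu$ by weak-$\ast$ convergence. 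To conclude $\mu$ is $f$-invariant, I need the right-hand side to converge to $\int \phi\circ f\,\mathrm{d}\mu$.

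The key step is uniform convergence of $f_\omega$ to $f$ as the noise vanishes. By hypothesis \textbf{(H1)}, the supports ${\rm supp}(\nu_{\varepsilon})$ form a nested family of compact sets shrinking to $\{\omega_f\}$, and by definition $\mathscr{F}:\Omega\to{\rm Diff}^2(M)$ is continuous with $\mathscr{F}(\omega_f)=f$. In particular, restricting $\mathscr{F}$ to the compact set ${\rm supp}(\nu_{\varepsilon_0})$ for some fixed $\varepsilon_0$ yields a uniformly continuous family, and therefore
\begin{equation*}
\eta_n:=\sup_{\omega\in{\rm supp}(\nu_{\varepsilon_n})}\,\sup_{x\in M}\,d(f_{\omega}(x),f(x)) \;\longrightarrow\; 0 \quad \text{as}\quad n\to\infty.
\end{equation*}
Since $M$ is compact, $\phi$ is uniformly continuous, so $\sup_{x,\omega}|\phi(f_{\omega}(x))-\phi(f(x))|\to 0$ as $n\to\infty$, uniformly over $\omega\in{\rm supp}(\nu_{\varepsilon_n})$ and $x\in M$.

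Combining these two observations, I can write
\begin{equation*}
\left|\int\!\!\int \phi(f_{\omega}(x))\,\mathrm{d}\mu_{\varepsilon_n}\,\mathrm{d}\nu_{\varepsilon_n} - \int \phi(f(x))\,\mathrm{d}\mu_{\varepsilon_n}(x)\right| \;\le\; \sup_{x,\omega}|\phi(f_{\omega}(x))-\phi(f(x))|\; \longrightarrow 0,
\end{equation*}
while $\int\phi\circ f\,\mathrm{d}\mu_{\varepsilon_n}\to\int\phi\circ f\,\mathrm{d}\mu$ by weak-$\ast$ convergence and continuity of $\phi\circ f$. Passing to the limit in the stationarity identity thus gives $\int\phi\,\mathrm{d}\mu=\int\phi\circ f\,\mathrm{d}\mu$ for every continuous $\phi$, whence $\mu$ is $f$-invariant. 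The only real obstacle is making rigorous the $C^0$ uniform convergence $f_\omega\to f$ over ${\rm supp}(\nu_\varepsilon)$, but this is immediate from the continuity of $\mathscr{F}$ together with the nested compact shrinking supports in \textbf{(H1)}; note that absolute continuity \textbf{(H2)} is not needed for this particular statement.
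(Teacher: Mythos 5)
Your proof is correct and is the standard argument: the paper itself gives no proof, deferring to \cite[Lemma 5.5]{Alv03}, and the argument there is essentially the same — rewrite stationarity as $\int\phi\,{\rm d}\mu_{\varepsilon}=\int\!\!\int\phi(f_{\omega}(x))\,{\rm d}\mu_{\varepsilon}\,{\rm d}\nu_{\varepsilon}$ for continuous $\phi$ and pass to the limit using the continuity of $\mathscr{F}$ together with ${\rm supp}(\nu_{\varepsilon})\to\{\omega_f\}$ from \textbf{(H1)}. You are also right that \textbf{(H2)} plays no role here.
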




We need the following observation:
\begin{Lemma}\label{stationaryabs}
Let $\mathscr{R}_f$ be a regular random perturbation of $f$, then any the stationary measure for $\mathscr{R}_f$ is absolutely continuous with respect to Lebesgue measure.
\end{Lemma}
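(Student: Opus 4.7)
The plan is to unwind the defining equation of a stationary measure and then plug in hypothesis $\mathbf{(H2)}$ directly. Let $\mu$ be a stationary measure for some $\nu_\varepsilon$ in the regular random perturbation $\mathscr{R}_f$. First I would rewrite the defining identity
\[
\mu(A)=\int \mu(f_{\omega}^{-1}A)\,d\nu_{\varepsilon}(\omega)
\]
by expanding $\mu(f_\omega^{-1}A) = \int \mathbf{1}_A(f_\omega(x))\,d\mu(x)$ and applying Fubini's theorem (which is legal since the integrand is a bounded nonnegative measurable function on $\Omega\times M$) to obtain
\[
\mu(A)=\int\!\!\int \mathbf{1}_A\bigl(\ell_x(\omega)\bigr)\,d\nu_{\varepsilon}(\omega)\,d\mu(x)=\int (\ell_x)_{\ast}\nu_{\varepsilon}(A)\,d\mu(x).
\]

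Next I would combine this with hypothesis $\mathbf{(H2)}$. Suppose $A\subset M$ is a Borel set with ${\rm Leb}(A)=0$. Then $\mathbf{(H2)}$ says that $(\ell_x)_{\ast}\nu_{\varepsilon}\ll {\rm Leb}$ for every $x\in M$, so $(\ell_x)_{\ast}\nu_{\varepsilon}(A)=0$ for every $x$. The displayed identity above therefore gives $\mu(A)=0$, which is exactly the statement $\mu\ll {\rm Leb}$.

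There is essentially no obstacle here: the lemma is a one-line consequence of the definitions, provided one is careful enough to legitimize the exchange of integrals, and the only nontrivial input is hypothesis $\mathbf{(H2)}$ on the smoothness of the noise. I would present the proof in exactly the two short steps above, with the Fubini interchange and the pointwise vanishing of $(\ell_x)_*\nu_\varepsilon$ on Lebesgue-null sets each explicitly noted.
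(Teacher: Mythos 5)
Your proof is correct and follows exactly the route the paper intends: the paper merely asserts that the lemma ``follows from the hypothesis \textbf{(H2)} of regular random perturbation'' and the definition of stationarity, and your Fubini rewriting $\mu(A)=\int (\ell_x)_{\ast}\nu_{\varepsilon}(A)\,d\mu(x)$ followed by evaluating on a Lebesgue-null set is precisely the omitted detail.
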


This follows from the hypothesis $\bf{(H2)}$ of regular random perturbation. Indeed, for any probability $\nu$ on $\Omega$ and stationary measure $\mu$ for $\nu$, the definition of stationary measure yields that if $(\ell_x)_{\ast}\nu\ll {\rm Leb}$ for every $x\in M$, then $\mu\ll {\rm Leb}$.
\smallskip

For a stationary measure $\mu$ for some Borel probability $\nu$ on $\Omega$, we say a Borel set $A$ is \emph{random invariant} if for $\mu$-a.e. $x\in M$,
$$
x\in A \Longrightarrow f_{\omega}(x)\in A \quad \textrm{for} ~\nu\text{-a.e.}~\omega,
$$
$$
x\in M\setminus A \Longrightarrow f_{\omega}(x)\in M\setminus A \quad \textrm{for} ~\nu\text{-a.e.}~\omega.
$$

\begin{Definition}
A stationary measure $\mu$ is ergodic if every random invariant set has either $\mu$ measure $0$ or $\mu$ measure $1$.
\end{Definition}

By the ergodic decomposition theorem in random case (see e.g. \cite[Appendix A.1]{kiferergodic},\cite[Proposition 5.9]{Alv03}), every stationary measure can be written as a convex combination of ergodic stationary measures. Therefore, the existence of ergodic stationary measures is guaranteed. We introduce the following notion, as a special kind of zero noise limits.

\begin{Definition}\label{randomlylimit}
Given $f\in {\rm Diff}^2(M)$ and an $f$-invariant measure $\mu$, if there is a regular
random perturbation $\mathscr{R}_f=\{\mathscr{F}, (\nu_{\varepsilon})_{\varepsilon>0}\}$ of $f$ such that
$\lim_{n\to +\infty}\mu_{\varepsilon_n}=\mu$ for some subsequence $\{\varepsilon_n\}$, where $\mu_{\varepsilon_n}$ is a stationary measure for $\nu_{\varepsilon_n}$ for every $n\in \NN$.
Then $\mu$ is said to be a randomly ergodic limit.
\end{Definition}

\begin{Lemma}\label{Lem:invariant-measure-G}
For any stationary measure $\mu$ for $\nu$, there is a unique $G$-invariant measure $\mu^G$ on $\Omega^\ZZ\times M$ such that $\PP_*\mu^G=\nu^\NN\times \mu$. We have the following properties:
\begin{itemize}
\item $\mu$ is an ergodic stationary measure if and only if $\mu^G$ is ergodic for $G$.

\item Suppose that $\mu_n$ is a stationary measure for $\nu_n$ for any $n\in \NN$, if $\mu_n\to\mu$, $\nu_n\to\nu$ as $n\to +\infty$, then $\mu_n^G\to \mu^G$ as $n\to +\infty$.
\end{itemize}
\end{Lemma}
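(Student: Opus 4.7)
The plan is to build $\mu^G$ as the natural (inverse-limit) extension of a one-sided skew-product, and then to transfer ergodicity and weak-$\ast$ continuity from that base to the extension.

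First I would introduce the one-sided skew product $F_+ : \Omega^\NN \times M \to \Omega^\NN \times M$, $F_+(\underline{\omega}^+, x) = (\sigma \underline{\omega}^+, f_{\omega_0}(x))$. A direct computation with indicator functions of product sets, Fubini, and the defining identity $\mu(A) = \int \mu(f_\omega^{-1} A)\, d\nu(\omega)$ shows that $\nu^\NN \times \mu$ is $F_+$-invariant. Since $G$ is invertible while $F_+$ is not, the system $(\Omega^\ZZ \times M, G)$ together with $\PP$ realises the standard natural extension of $(\Omega^\NN \times M, F_+, \nu^\NN\times \mu)$; Rokhlin's natural-extension theorem then produces a unique $G$-invariant Borel probability $\mu^G$ with $\PP_* \mu^G = \nu^\NN \times \mu$, settling the existence and uniqueness part of the lemma.

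Next, the ergodicity equivalence goes through $F_+$-ergodicity of $\nu^\NN \times \mu$. Since the natural extension preserves ergodicity, it suffices to identify random-invariant subsets of $M$ with $F_+$-invariant subsets of $\Omega^\NN \times M$ modulo null sets. One direction is immediate: a random-invariant $A \subset M$ lifts to the $F_+$-invariant product $\Omega^\NN \times A$. For the other, given $B \subset \Omega^\NN \times M$ with $F_+^{-1}B = B$ modulo null sets, set $h(x) = \nu^\NN(B_x)$ with $B_x = \{\underline{\omega}^+ : (\underline{\omega}^+, x) \in B\}$; the $F_+$-invariance combined with the i.i.d.\ structure of $\nu^\NN$ and the kernel hypothesis \textbf{(H2)} give $h \circ f_\omega = h$ for $\nu$-a.e.\ $\omega$, a standard Markov $0$-$1$ argument then forces $h \in \{0,1\}$ $\mu$-a.e., and $A = \{h = 1\}$ is the desired random-invariant set with $\mu(A) = (\nu^\NN \times \mu)(B)$.

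For the weak-$\ast$ continuity, $\nu_n \to \nu$ implies $\nu_n^\NN \to \nu^\NN$ weakly on $\Omega^\NN$, so together with $\mu_n \to \mu$ we obtain $\nu_n^\NN \times \mu_n \to \nu^\NN \times \mu$. After passing to a subsequence, an accumulation point $\mu^\ast$ of $\{\mu_n^G\}$ exists in the weak-$\ast$ topology on Borel probabilities of $\Omega^\ZZ \times M$; it is $G$-invariant by continuity of $G$, and $\PP_* \mu^\ast = \nu^\NN \times \mu$ by continuity of $\PP$. Uniqueness from the first step then forces $\mu^\ast = \mu^G$, so the full sequence converges. I expect the main obstacle to be the random-invariant / $F_+$-invariant dictionary of the ergodicity step: the correspondence is not a literal bijection, and the proof genuinely uses both the i.i.d.\ structure of the noise and the absolute-continuity property \textbf{(H2)} to rule out pathological $F_+$-invariant sets that depend nontrivially on the noise coordinates.
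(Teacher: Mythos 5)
Your argument is correct and matches the paper's: the existence/uniqueness and the ergodicity equivalence are exactly the content of the cited results [LiQ95, Propositions I.1.2--1.3], whose standard proofs (natural extension of the one-sided skew product, plus the martingale/$0$--$1$ dictionary between random-invariant sets and $F_+$-invariant sets) you have reproduced, and your weak-$\ast$ convergence step (an accumulation point is $G$-invariant, push it forward under $\PP$, invoke uniqueness) is identical to the paper's. One small remark: hypothesis \textbf{(H2)} plays no role in the ergodicity dictionary --- that equivalence holds for any stationary measure of an i.i.d.\ random system; what the step really uses is harmonicity of $h$ together with the stationary bounded-martingale increment argument to get $h\circ f_\omega=h$ for $\nu\times\mu$-a.e.\ $(\omega,x)$.
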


\begin{proof} It follows form \cite[Proposition 1.2]{LiQ95} that for every stationary measure $\mu$ for some probability $\nu$ on $\Omega$, there exists the unique $G$-invariant measure $\mu^G$ such that $\PP_*\mu^G=\nu^\NN\times \mu$. Moreover, by \cite[Proposition 1.3]{LiQ95}
we have that $\mu$ is ergodic if and only if $\mu^G$ is ergodic for $G$.

Under the assumptions of the second item,
by passing to the limit, one gets that $\mu$ is a stationary measure for $\nu$.
Assume that $\lim_{k\to \infty}\mu_{n_k}^G=\eta$ for some subsequence $\{n_k\}$. Since each $\mu_n^G$ is $G$-invariant, $\eta$ is $G$-invariant.
Moreover, by applying the continuity of $\PP$ one gets
$$
\PP_{\ast}\eta=\PP_{\ast}\lim_{k\to \infty}\mu_{n_k}^G=\lim_{k\to \infty}\PP_{\ast}\mu_{n_k}^G=\lim_{k\to \infty} \nu_{n_k}^{\mathbb{N}}\times \mu_{n_k}=\nu^{\NN}\times \mu.
$$
Since $\mu^G$ is the only $G$-invariant measure such that $\PP_{\ast}\mu^G=\nu^{\NN}\times \mu$, we obtain $\eta=\mu^G$ and thus $\lim_{n\to \infty}\mu_n^G=\mu^G$.

\end{proof}

As a consequence of Lemma \ref{Lem:invariant-measure-G}, we have:

\begin{Corollary}\label{toinv}
Let $\mu_n$ be a sequence of stationary measures for $\nu_n$, if $\nu_n\to \delta_{\omega_f}$ and $\mu_n \to \mu$ as $n\to +\infty$, then $\lim_{n\to \infty}\mu^G_n=\delta_{\omega_f}^{\mathbb{Z}}\times \mu$. In particular, if $\mu$ is a randomly ergodic limit, then $\mu^G=\delta_{\omega_f}^{\mathbb{Z}}\times \mu$.
\end{Corollary}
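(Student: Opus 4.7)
The plan is to reduce everything to the uniqueness statement contained in Lemma \ref{Lem:invariant-measure-G}, together with the continuity statement in its second bullet. The heart of the matter is just to produce an obvious candidate $G$-invariant measure whose projection is correct, and then quote uniqueness.

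First I would observe that by the lemma preceding Lemma \ref{Lem:invariant-measure-G} (the one that says accumulation points of stationary measures for $\nu_\varepsilon$ are $f$-invariant), the weak-$*$ limit $\mu$ is automatically $f$-invariant, i.e.\ a stationary measure for $\delta_{\omega_f}$. Thus the object $\mu^G$ makes sense as the unique $G$-invariant measure on $\Omega^{\mathbb{Z}}\times M$ with $\mathbb{P}_*\mu^G=\delta_{\omega_f}^{\mathbb{N}}\times\mu$. Applying the second bullet of Lemma \ref{Lem:invariant-measure-G} directly, with $\nu_n\to\delta_{\omega_f}$ and $\mu_n\to\mu$, yields $\mu_n^G\to\mu^G$ as $n\to+\infty$.

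Next I would exhibit $\delta_{\omega_f}^{\mathbb{Z}}\times\mu$ as a $G$-invariant measure on $\Omega^{\mathbb{Z}}\times M$ with the correct projection. For invariance, note that $\delta_{\omega_f}^{\mathbb{Z}}$ is concentrated on the constant sequence $\underline{\omega}_f=(\ldots,\omega_f,\omega_f,\ldots)$, and for this sequence $\sigma(\underline{\omega}_f)=\underline{\omega}_f$ and $f_{\omega_f}=f$, so $G(\underline{\omega}_f,x)=(\underline{\omega}_f,f(x))$; combined with the $f$-invariance of $\mu$ this shows $G_*(\delta_{\omega_f}^{\mathbb{Z}}\times\mu)=\delta_{\omega_f}^{\mathbb{Z}}\times\mu$. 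For the projection, $\mathbb{P}_*(\delta_{\omega_f}^{\mathbb{Z}}\times\mu)=\delta_{\omega_f}^{\mathbb{N}}\times\mu$ is immediate from the definition of $\mathbb{P}$. By the uniqueness assertion in Lemma \ref{Lem:invariant-measure-G} we conclude $\mu^G=\delta_{\omega_f}^{\mathbb{Z}}\times\mu$, which combined with the previous step gives $\lim_{n\to\infty}\mu_n^G=\delta_{\omega_f}^{\mathbb{Z}}\times\mu$.

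For the ``in particular'' assertion, suppose $\mu$ is a randomly ergodic limit. Unpacking Definition \ref{randomlylimit}, there is a regular random perturbation $\mathscr{R}_f=\{\mathscr{F},(\nu_\varepsilon)_{\varepsilon>0}\}$ and a sequence $\varepsilon_n\to 0$ with $\mu_{\varepsilon_n}\to\mu$, where $\mu_{\varepsilon_n}$ is stationary for $\nu_{\varepsilon_n}$. Hypothesis $\mathbf{(H1)}$ guarantees $\nu_{\varepsilon_n}\to\delta_{\omega_f}$, so the first part applies and $\mu^G=\delta_{\omega_f}^{\mathbb{Z}}\times\mu$, as desired.

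I do not expect any real obstacle here; the only point requiring a moment's care is verifying that the candidate product measure is genuinely $G$-invariant and that one is entitled to invoke the uniqueness clause of Lemma \ref{Lem:invariant-measure-G}, both of which are checked above in one line.
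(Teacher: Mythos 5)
Your proposal is correct and follows exactly the route the paper intends: the paper states Corollary \ref{toinv} as an immediate consequence of Lemma \ref{Lem:invariant-measure-G}, and your argument fills in the only nontrivial details (that $\delta_{\omega_f}^{\mathbb{Z}}\times\mu$ is $G$-invariant with projection $\delta_{\omega_f}^{\mathbb{N}}\times\mu$, so uniqueness identifies it with $\mu^G$) in the natural way. No gaps.
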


\subsection{Dominated splittings under random perturbations}

Let $\Lambda$ be an attractor for a $C^2$ diffeomorphism $f$ with trapping region $U$, and there exists the dominated splitting $T_{\Lambda}M=E\oplus F$. Fixing a regular random perturbation $\mathscr{R}_f=\{\mathscr{F}, (\nu_{\varepsilon})_{\varepsilon>0}\}$.
Since $f(\overline{U})\subset U$, we can choose $\varepsilon_0$ small enough such that: for every $\varepsilon\le \varepsilon_0$, $f_{\underline{\omega}}(\overline{U})\subset U$ for any $\underline{\omega}\in \mathcal{N}_{\varepsilon_0}^{\mathbb{Z}}$ as well.
For every $\underline{\omega}=(\cdots,\omega_{-1},\omega_{0},\omega_{1}\cdots)\in\mathcal{N}_{\varepsilon}^{\mathbb{Z}}$, $\varepsilon\le \varepsilon_0$, we write
$\Lambda_{\underline{\omega}}=\bigcap_{n\ge1}f_{\sigma^{-n}\underline{\omega}}^n(\overline{U})$.
By construction, $\{\Lambda_{\underline{\omega}}\}$ is a family of compact subsets satisfying $f_{\omega_0}\Lambda_{\underline{\omega}}=\Lambda_{\sigma\underline{\omega}}$ for each $\underline{\omega}\in \mathcal{N}_{\varepsilon}^{\mathbb{Z}}$.
For each $\varepsilon\le \varepsilon_0$, we define
$
\Lambda_{\varepsilon}=\bigcup_{\underline{\omega}\in \mathcal{N}_{\varepsilon}^{\mathbb{Z}}}\{\underline{\omega}\}\times \Lambda_{\underline{\omega}},
$
it is a $G$-invariant compact subset contained in $\mathcal{N}_{\varepsilon}^{\mathbb{Z}}\times U$.
By the choice of $\varepsilon_0$, for every $\varepsilon\le \varepsilon_0$,
any stationary measure for $\nu_{\varepsilon}$ supported on $U$ is concentrated on $\PP_M(\Lambda_{\varepsilon})$ and the corresponding G-invariant measure $\mu^G$ is concentrated on $\Lambda_{\varepsilon}$.


By standard cones argument as in \cite[\S 4.3]{CY05} one has
\begin{Lemma}\label{randomconti} For each $\varepsilon\le \varepsilon_0$,  there are random sub-bundles $E(\underline{\omega},x)$ and $F(\underline{\omega},x)$ for every $(\underline{\omega},x)\in \Lambda_{\varepsilon}$ corresponding to $E$ and $F$. Moreover,
$E(\underline{\omega},x)$ and $F(\underline{\omega},x)$ depend continuously on $(\underline{\omega},x)$.
\end{Lemma}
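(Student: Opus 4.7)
The plan is to carry out the standard invariant cone field construction (as referenced to \cite[\S 4.3]{CY05}), but organized so that $\varepsilon$-uniform contraction of the cones along every random orbit in $\Lambda_\varepsilon$ falls out at the end to give continuity of the resulting sub-bundles.

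First, I would extend the continuous sub-bundles $E$ and $F$ to $\overline{U}$, and for small $a>0$ form cone fields $\mathcal{E}_a$ around $E$ (as already defined in $\S$\ref{se2}) and, symmetrically, $\mathcal{F}_a$ around $F$, on all of $\overline{U}$. The domination $\|Df|_{F(x)}\|\le\tau\|Df^{-1}|_{E(f(x))}\|^{-1}$ with $\tau<1$ gives a strict forward cone invariance for $f$: there exist $a_0>0$ and $\tau'\in(\tau,1)$ such that
\[
Df(x)\,\mathcal{E}_a(x)\subset\mathcal{E}_{\tau' a}(f(x)),\qquad
Df^{-1}(f(x))\,\mathcal{F}_a(f(x))\subset\mathcal{F}_{\tau' a}(x)
\]
for every $x\in\Lambda$ and $a\le a_0$. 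By continuity of $Df$ and since the cone inclusions are open conditions and $\overline{U}$ is compact, these inclusions persist (with a slightly enlarged factor $\tau''<1$) on all of $\overline{U}$.

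Second, using the continuity of $\mathscr{F}:\Omega\to\mathrm{Diff}^2(M)$ and the compactness of $\mathrm{supp}(\nu_\varepsilon)$, I would shrink $\varepsilon_0$ further (to some $\varepsilon_1\le\varepsilon_0$) so that, for every $\omega\in\mathrm{supp}(\nu_{\varepsilon_1})$ and every $x\in\overline{U}$ with $f_\omega(x)\in\overline{U}$,
\[
Df_\omega(x)\,\mathcal{E}_a(x)\subset\mathcal{E}_{\tau'' a}(f_\omega(x)),\qquad
Df_\omega^{-1}(f_\omega(x))\,\mathcal{F}_a(f_\omega(x))\subset\mathcal{F}_{\tau'' a}(x),
\]
with a uniform constant $\tau''<1$. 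Composing these inclusions along any random orbit gives, for every $n\ge 0$ and every $\underline{\omega}\in\mathrm{supp}(\nu_\varepsilon)^{\ZZ}$ with $\varepsilon\le\varepsilon_1$, a geometric contraction of the iterated cones, uniform in $(\underline{\omega},x)\in\Lambda_\varepsilon$.

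Third, for $(\underline{\omega},x)\in\Lambda_\varepsilon$, let $x_{-n}$ denote the unique preimage of $x$ under $f^n_{\sigma^{-n}\underline{\omega}}$ inside $\overline{U}$; its existence and continuous dependence on $(\underline{\omega},x)$ follow from the construction of $\Lambda_{\underline{\omega}}$ together with the fact that $f_\omega$ is a $C^2$ diffeomorphism depending continuously on $\omega$. I would then set
\[
E(\underline{\omega},x)\;=\;\bigcap_{n\ge 0}Df^{n}_{\sigma^{-n}\underline{\omega}}(x_{-n})\,\mathcal{E}_a(x_{-n}),\qquad
F(\underline{\omega},x)\;=\;\bigcap_{n\ge 0}Df^{-n}_{\sigma^{n}\underline{\omega}}(x_{n})\,\mathcal{F}_a(x_{n}),
\]
the second intersection using the forward random orbit $x_n=f^n_{\underline{\omega}}(x)\in U$ guaranteed by $f_{\underline{\omega}}(\overline{U})\subset U$. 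Uniform cone contraction from step two forces each intersection to be a single linear subspace, of the correct dimension ($\dim E$ and $\dim F$ respectively), and of diameter decaying geometrically in $n$. The random equivariance $Df_{\omega_0}E(\underline{\omega},x)=E(\sigma\underline{\omega},f_{\omega_0}x)$, and the corresponding identity for $F$, are immediate from the definitions, as is the fact that $E(\omega_f^{\ZZ},x)=E(x)$ and $F(\omega_f^{\ZZ},x)=F(x)$ on $\Lambda$.

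Finally, each finite-$n$ approximation $(\underline{\omega},x)\mapsto Df^n_{\sigma^{-n}\underline{\omega}}(x_{-n})\mathcal{E}_a(x_{-n})$ is continuous in the Grassmannian Hausdorff topology, since $f_\omega$, $Df_\omega$ and $x_{-n}$ all depend continuously on $(\underline{\omega},x)$. The $\varepsilon$-uniform geometric contraction then upgrades this to uniform convergence to $E(\underline{\omega},x)$ on $\Lambda_\varepsilon$, yielding continuity of the limit; the argument for $F$ is symmetric. I expect the only delicate point to be precisely this uniformity of contraction across all of $\mathrm{supp}(\nu_\varepsilon)^{\ZZ}\times\overline{U}$; once it is established via compactness and continuity of $\mathscr{F}$, the rest is standard.
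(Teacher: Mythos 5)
Your proposal is correct and is exactly the standard invariant cone field argument that the paper itself invokes (it gives no proof beyond citing \cite[\S 4.3]{CY05}): uniform strict cone invariance from domination plus compactness of ${\rm supp}(\nu_{\varepsilon})$, nested intersections along backward/forward random orbits, and continuity from uniform convergence of the continuous finite-stage approximations. No gaps; note only that the paper's displayed domination inequality has a typo (the right-hand side should be $\tau\|Df^{-1}|_{E(f(x))}\|^{-1}$), and you have used the correct form.
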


%

%
%
Recall the notion of local unstable disk in deterministic dynamical systems.
We represent $W^{E,u}_{\delta}(x)$ as the \emph{local unstable disk} of $x$ with radius $\delta$ around $x$, which is tangent to $E$ everywhere and $(C,\lambda)$-\emph{backward contracting} for some $C>0$ and $\lambda\in (0,1)$, i.e.,
$$
d(f^{-n}(y), f^{-n}(z))\le C\lambda^n d(y,z) \quad \textrm{for every}~n\in \NN,
$$
whenever $y,z \in W_{\delta}^{E,u}(x)$.

Similarly, using $W^{E,u}_{\delta}(\underline{\omega},x)$ to denote the \emph{random local unstable disk} of $(\underline{\omega},x)$ with radius $\delta$ around $x$, which is tangent to the random sub-bundle $E(\underline{\omega},\cdot)$ everywhere and $(C,\lambda)$-\emph{backward contracting} for some $C>0, \lambda\in (0,1)$, in the sense that
$$
d(f_{\underline{\omega}}^{-n}(y), f_{\underline{\omega}}^{-n}(z))\le C\lambda^n d(y,z) \quad \textrm{for every}~n\in \NN,
$$
whenever $y,z \in W_{\delta}^{E,u}(\underline{\omega},x)$. Note that we have $W_{\delta}^{E,u}(\underline{\omega},x)\subset \Lambda_{\underline{\omega}}$ if $(\underline{\omega},x)\in \Lambda_{\varepsilon_0}$.

Sometimes, one needs to look random unstable disks on the product space $\Omega^{\ZZ}\times M$. For this,
we use $\{\underline{\omega}\}\times W^{E,u}_{\delta}(\underline{\omega},x)\subset \{\underline{\omega}\}\times M$ to denote the endowed random local unstable disk associated to $W^{E,u}_{\delta}(\underline{\omega},x)$.

\smallskip
Now we state a result which asserts the uniform Lipschitz continuity of random local unstable disks, it is a simple generalization of its deterministic version (see e.g. \cite[$\S$ 6.1]{HP68}). (One can see a direct proof in Appendix \ref{se7}).
\begin{Lemma}\label{hee}
Given $C>0$ and $0<\lambda<1$ there exists constant $L_0>0$, such that for any $(C,\lambda)$-backward contracting random local unstable disk $\gamma=W^{E,u}_{\delta}(\underline{\omega},x)$ of $(\underline{\omega},x)\in \Lambda_{\varepsilon_0}$, we have
$$
\rho(T_y\gamma,T_z\gamma)\le L_0d(x,y)
$$
for any $y,z\in \gamma$.
\end{Lemma}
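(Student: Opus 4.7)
The plan is to adapt the classical graph-transform proof for the $C^{1,1}$-regularity of unstable manifolds (as in Hirsch--Pugh \cite[$\S 6.1$]{HP68}) to the random setting. Using the continuity provided by Lemma \ref{randomconti} and the compactness of ${\rm supp}(\nu_{\varepsilon_0})$, I would first fix a uniformly continuous family of local charts on $\Lambda_{\varepsilon_0}$ trivializing the splitting $E(\underline{\omega},\cdot)\oplus F(\underline{\omega},\cdot)$, and represent $\gamma = W^{E,u}_{\delta}(\underline{\omega},x)$ as the graph of a $C^1$ map $\phi:D_E\to F$ satisfying $\|D\phi\|\le a$, since $T\gamma\subset\mathcal{E}_a$. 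In these charts the Grassmannian distance $\rho(T_y\gamma,T_z\gamma)$ is comparable to $\|D\phi(\pi_E y)-D\phi(\pi_E z)\|$, so the lemma reduces to a uniform Lipschitz bound on $D\phi$ independent of $(\underline{\omega},x)$ and of the particular disk $\gamma$.

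Next, I would pull $\gamma$ back along the random orbit: set $\gamma_n = f_{\underline{\omega}}^{-n}(\gamma)$ and represent $\gamma_n$ as the graph of $\phi_n$ in the chart at $(\sigma^{-n}\underline{\omega},f_{\underline{\omega}}^{-n}x)$. Backward $(C,\lambda)$-contraction shrinks the diameter of $\gamma_n$ at rate $C\lambda^n$, and forward invariance of the cone field $\mathcal{E}_a$ under the dominated splitting keeps $\|D\phi_n\|\le a$ uniformly in $n$. The graph-transform relation between $\phi_{n-1}$ and $\phi_n$, differentiated twice and measured in the Lipschitz norm, yields a recursion
$$
{\rm Lip}(D\phi_{n-1}) \le \alpha\,{\rm Lip}(D\phi_n) + \beta,
$$
in which $\alpha<1$ is produced by the domination constant $\tau$ together with the backward contraction rate $\lambda$ along $T\gamma$, and $\beta$ is controlled by $\sup\{\|f_\omega\|_{C^2}:\omega\in{\rm supp}(\nu_{\varepsilon_0})\}$, which is finite because $\mathscr{F}$ is continuous and ${\rm supp}(\nu_{\varepsilon_0})$ is compact.

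Iterating the recursion backwards from time $n$ gives
$$
{\rm Lip}(D\phi_0) \le \frac{\beta}{1-\alpha} + \alpha^n\,{\rm Lip}(D\phi_n),
$$
and since $\|D\phi_n\|\le a$ for all $n$ while $\alpha<1$, the remainder term tends to $0$ as $n\to\infty$; this yields the uniform Lipschitz constant $L_0 = \beta/(1-\alpha)$, depending only on $C$, $\lambda$, $\tau$, $a$, and the $C^2$ bound on $\mathscr{F}$. Translating back through the chart identification delivers the desired estimate $\rho(T_y\gamma,T_z\gamma)\le L_0\,d(y,z)$. The main obstacle I anticipate is establishing the strict inequality $\alpha<1$ from mere domination rather than from uniform hyperbolicity: this requires careful bookkeeping of the Jacobian factors that appear when differentiating the graph transform twice, combining the domination rate with the backward contraction $\lambda$ to absorb the second-derivative term. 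Uniformity in $\underline{\omega}$ throughout follows automatically from compactness of ${\rm supp}(\nu_{\varepsilon_0})^{\ZZ}$ and continuity of $\omega\mapsto f_\omega$ in the $C^2$ topology.
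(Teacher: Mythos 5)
Your overall strategy---the graph-transform recursion for ${\rm Lip}(D\phi_n)$---is the classical Hirsch--Pugh route, and your identification of the contraction factor $\alpha$ as coming from the domination rate $\tau$ combined with the backward contraction $\lambda$ along the disk is correct (indeed $\alpha\approx\tau\lambda<1$ with room to spare). The gap is in how you terminate the recursion. You bound the remainder term $\alpha^n\,{\rm Lip}(D\phi_n)$ by invoking $\|D\phi_n\|\le a$, but these are different quantities: cone invariance gives a $C^0$ bound on the \emph{first} derivative of $\phi_n$, whereas ${\rm Lip}(D\phi_n)$ is a modulus-of-continuity bound on that derivative (morally a second derivative). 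The disks $\gamma_n=f_{\underline{\omega}}^{-n}(\gamma)$ are diffeomorphic copies of the single disk $\gamma$, which is a priori only $C^1$, so nothing guarantees ${\rm Lip}(D\phi_n)<\infty$, let alone that $\alpha^n\,{\rm Lip}(D\phi_n)\to 0$. As written, the inequality ${\rm Lip}(D\phi_0)\le \beta/(1-\alpha)+\alpha^n\,{\rm Lip}(D\phi_n)$ therefore does not close. The standard repair is either to run the graph transform forward on a smooth seed, check that the set $\{{\rm Lip}(D\phi)\le \beta/(1-\alpha)\}$ is invariant, and pass to the limit defining $\gamma$; or to find some a priori sub-exponential bound on ${\rm Lip}(D\phi_n)$, which is exactly what is missing.

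The paper sidesteps this issue entirely and never differentiates the graph transform twice. It fixes a smooth reference distribution $\widehat{E}$ close to $E$, with Lipschitz constant $\widehat{C}$ known in advance, observes that at time $-n$ the two subspaces $\widehat{E}$ over $f_{\underline{\omega}}^{-n}(y)$ and $f_{\underline{\omega}}^{-n}(z)$ are at Grassmannian distance at most $\widehat{C}C\lambda^n d(y,z)$, pushes them forward $n$ steps using the $\tau^n$ Grassmannian contraction supplied by domination, and uses the convergence $Df_{\sigma^{-n}\underline{\omega}}^n\widehat{E}\to T\gamma$ to pass to the limit, obtaining the uniform bound $\widehat{C}C\,d(y,z)$ directly. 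Both arguments use the same two ingredients ($\tau$ and $\lambda$), but the paper's version replaces your problematic remainder term by the a priori Lipschitz constant of the smooth auxiliary distribution. If you adopt that substitution, the rest of your plan (uniformity in $\underline{\omega}$ via compactness of ${\rm supp}(\nu_{\varepsilon_0})$ and continuity of $\mathscr{F}$, and the reduction of $\rho(T_y\gamma,T_z\gamma)$ to chart coordinates) goes through.
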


We shall consider following typical (random) local unstable disks.
\begin{Definition}\label{bck}
Given $\delta>0$, $\lambda\in (0,1)$,
we say a random local unstable disk $W^{E,u}_{\delta}(\underline{\omega},x)$ of $(\underline{\omega},x)$ is $\lambda$-backward contracting if
\begin{itemize}
\item $W^{E,u}_{\delta}(\underline{\omega},x)$ is $(1, \lambda)$-backward contracting;
\item $\|Df^{-n}|_{E(\underline{\omega},y)}\| \le \lambda^n$ for every $n\in \NN$ and $y\in W^{E,u}_{\delta}(\underline{\omega},x)$.
\end{itemize}
\end{Definition}

%
%
Similarly, a local unstable disk $W^{E,u}_{\delta}(x)$ is said to be \emph{$\lambda$-backward contracting} whenever
it is $(1, \lambda)$-backward contracting and $\|Df^{-n}|_{E(y)}\| \le \lambda^n$ for every $n\in \NN$ and $y\in W^{E,u}_{\delta}(x)$.

\subsection{Random Gibbs $cu$-states}
Throughout this subsection, let $f\in {\rm Diff}^2(M)$ and fix a stationary measure $\mu$ of $\mathscr{R}_f^{\varepsilon}=\{\cF, \nu_{\varepsilon}\}$ for some small $\varepsilon>0$ and assume $\mu^G$ is the $G$-invariant measure associated to $\mu$ given by Lemma \ref{Lem:invariant-measure-G}.

According to the hypotheses \textbf{(H1)}, $\mathcal{N}_{\varepsilon}={\rm supp}(\nu_{\varepsilon})$ is compact, then use the continuity of $\mathscr{F}$, we have that $\mathscr{F}(\mathcal{N}_{\varepsilon})$ is a compact subset of ${\rm Diff}^2(M)$ around $f$, which implies that
\begin{equation}\label{forLY}
\int \left(\log^{+}|f_{\omega}|_{C^1}+\log^{-}|f_{\omega}^{-1}|_{C^1}\right){\rm d}\nu_{\varepsilon}(\omega)<+\infty,
\end{equation}
\begin{equation}\label{forMA}
\int \log^{+}|f_{\omega}^{-1}|_{C^2}{\rm d}\nu_{\varepsilon}(\omega)<+\infty,
\end{equation}
where we use the notation $\log ^{+}a=\max\{\log a,0\}$ and $\log^{-}a=\max\{-\log a,0\}$.


In terms of $(\ref{forLY})$, we have the following proposition \cite[Chapter VI: Proposition 1.2]{LiQ95}
\begin{Proposition}
For $\mu^G$-a.e. $(\underline{\omega},x)$, there exists $r(x)\in \mathbb{N}$ and a decomposition
$$
  T_xM=\bigoplus_{i=1}^{r(x)}E_i(\underline{\omega},x)
 $$
such that there are numbers $\lambda_1(x)>\cdots.\lambda_{r(x)}(x)$ satisfying
$$
\lim_{n\rightarrow \pm \infty}\frac{1}{n}\log \|Df_{\underline{\omega}}^{n}(x)v\|=\lambda_{i}(\underline{\omega},x)=\lambda_{i}(x)
$$
for any non-zero vector $v\in E_i(\underline{\omega},x)$, and every $1\le i \le r(x)$.
\end{Proposition}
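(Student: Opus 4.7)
The plan is to apply the classical Oseledec multiplicative ergodic theorem to the invertible skew-product $G$ on $\Omega^{\ZZ}\times M$ equipped with the $G$-invariant probability $\mu^G$ from Lemma \ref{Lem:invariant-measure-G}, together with the measurable linear cocycle
$$
A(\underline{\omega}, x) = Df_{\omega_0}(x), \qquad A^n(\underline{\omega}, x) = Df_{\underline{\omega}}^n(x),
$$
acting on the pullback of $TM$ via $\PP_M$.

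First I would verify the integrability hypothesis of Oseledec. Since $A^{\pm 1}(\underline{\omega},x)$ depends only on $\omega_0$ and $x$, and since $\PP_\ast \mu^G = \nu_\varepsilon^{\NN}\times \mu$ projects to $\nu_\varepsilon$ on the $\omega_0$-coordinate,
$$
\int \log^+\|A^{\pm 1}\|\, d\mu^G \le \int \bigl(\log^+ |f_\omega|_{C^1}+\log^+ |f_\omega^{-1}|_{C^1}\bigr)\,d\nu_\varepsilon(\omega),
$$
which is finite by (\ref{forLY}) together with the compactness of $\mathscr{F}(\mathcal{N}_\varepsilon)\subset \mathrm{Diff}^2(M)$. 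Oseledec's theorem then produces, for $\mu^G$-almost every $(\underline{\omega}, x)$, an integer $r(\underline{\omega}, x)$, a measurable splitting $T_xM=\bigoplus_{i=1}^{r}E_i(\underline{\omega}, x)$, and real numbers $\lambda_1(\underline{\omega}, x)>\cdots>\lambda_r(\underline{\omega}, x)$ realizing the two-sided limit $\lim_{n\to \pm\infty}n^{-1}\log\|Df_{\underline{\omega}}^n(x)v\| = \lambda_i(\underline{\omega}, x)$ for every nonzero $v\in E_i(\underline{\omega}, x)$. This already gives every conclusion of the proposition except the identities $\lambda_i(\underline{\omega},x)=\lambda_i(x)$ and $r(\underline{\omega},x)=r(x)$.

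To eliminate the $\underline{\omega}$-dependence, fix $x$ and regard the ordered forward Lyapunov spectrum (with multiplicities) as a function of $(\omega_k)_{k\ge 0}$. Altering any finite prefix of the noise perturbs $Df_{\underline{\omega}}^n(x)$ only by a uniformly bounded factor for large $n$, which vanishes after dividing by $n$, so this spectrum is tail-measurable in $(\omega_k)_{k\ge 0}$. Under $\mu^G$ the marginal on $(\omega_k)_{k\ge 0}$ is $\nu_\varepsilon^{\NN}$, which is iid, so Kolmogorov's 0-1 law forces the forward spectrum to be $\nu_\varepsilon^{\NN}$-a.s. equal to constants depending only on $x$. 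By the $G$-invariance of $\mu^G$, the past coordinates $(\omega_k)_{k<0}$ are also iid $\nu_\varepsilon$, and the symmetric argument applied to the backward Lyapunov spectrum yields the same conclusion; combined with the coincidence of forward and backward exponents in the invertible Oseledec theorem, this gives $\lambda_i(\underline{\omega},x) = \lambda_i(x)$ and $r(\underline{\omega},x) = r(x)$ for $\mu^G$-a.e. $(\underline{\omega},x)$.

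The main point of care is the last step: using the iid structure of $\nu_\varepsilon^{\NN}$ and the $G$-invariance of $\mu^G$ to collapse the noise-dependence of the Lyapunov exponents via a tail-triviality argument. The rest is a routine transcription of the deterministic Oseledec theorem to the skew-product, once the cocycle $A$ and the invariant measure $\mu^G$ are set up.
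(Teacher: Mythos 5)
The paper does not actually prove this proposition: it is quoted verbatim from Liu--Qian \cite[Chapter VI: Proposition 1.2]{LiQ95}, with the integrability condition (\ref{forLY}) as the only hypothesis checked. So you are attempting more than the paper does. Your first half --- applying the two-sided Oseledec theorem to the cocycle $A(\underline{\omega},x)=Df_{\omega_0}(x)$ over $(G,\mu^G)$, with integrability supplied by (\ref{forLY}) and the compactness of $\mathscr{F}(\mathcal{N}_{\varepsilon})$ --- is correct and is essentially how the reference begins.

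The gap is in the step that removes the $\underline{\omega}$-dependence. For fixed $x$ the forward Lyapunov spectrum is \emph{not} a tail function of $(\omega_k)_{k\ge 0}$: altering the prefix $\omega_0,\dots,\omega_{k-1}$ does not merely multiply $Df^n_{\underline{\omega}}(x)$ by a factor bounded uniformly in $n$, it changes the orbit point $f^j_{\underline{\omega}}(x)$ for every $j\ge 1$ and hence every subsequent matrix $Df_{\omega_j}(f^j_{\underline{\omega}}(x))$ in the product. Two noise sequences agreeing from time $k$ onward can steer $x$ into entirely different parts of $\Lambda$ with different asymptotic expansion, so Kolmogorov's 0--1 law does not apply. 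The symmetric argument for the backward spectrum has a second defect: under $\mu^G$ the past coordinates $(\omega_k)_{k<0}$ are \emph{not} independent of $x$ --- only $\PP_{*}\mu^G=\nu_{\varepsilon}^{\NN}\times\mu$ is a product, and the fiber measures of $\mu^G$ over $\Omega^{\ZZ}$ genuinely depend on the past --- so even a valid tail argument in $\underline{\omega}^-$ would not hold conditionally on $x$. The standard repair is: the forward exponents, $r$, and the multiplicities are invariant functions of the one-sided skew product $F(\underline{\omega}^+,x)=(\sigma\underline{\omega}^+,f_{\omega_0}(x))$ on $(\Omega^{\NN}\times M,\nu_{\varepsilon}^{\NN}\times\mu)$, and for i.i.d.\ noise every $F$-invariant function coincides a.e.\ with a function of $x$ alone. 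That lemma is proved with the Markov property plus martingale convergence (conditioning $h$ on $\sigma(\omega_0,\dots,\omega_{n-1},x)$ gives $g(f^n_{\underline{\omega}}(x))$ with $g(y)=\int h(\cdot,y)\,{\rm d}\nu_{\varepsilon}^{\NN}$, and the conditional expectations converge to $h$), not with tail-triviality. With it in place, the coincidence of forward and backward exponents in the invertible Oseledec theorem yields $\lambda_i(\underline{\omega},x)=\lambda_i(x)$ and $r(\underline{\omega},x)=r(x)$ as you intended.
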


\begin{Definition}
The numbers $\lambda_i(x)$, $1\le i \le r(x)$ we introduced above are called the random Lyapunov exponents of $\mathscr{R}_f^{\varepsilon}$.
\end{Definition}

Since condition $(\ref{forMA})$ holds, corresponding to the positive(resp. negative) random Lyapunov exponents, one may construct random unstable(resp. stable) manifolds analogous in deterministic dynamical systems. See \cite[Chapter VI: Proposition 1.4]{LiQ95} for detalis.

\begin{Proposition}\label{unstablerandom}
For $\mu^{G}$-a.e. $(\underline{\omega},x)$, if
$\lambda_1(x)>\cdots >\lambda_d(x)$ are the positive random Lyapunov exponents of $(\underline{\omega},x)$, then there exist random unstable manifolds $W^{u,i}(\underline{\omega},x)$,$1\le i \le d$ defined by
$$
W^{u,i}(\underline{\omega},x)=\left\{y\in M: \limsup_{n\rightarrow +\infty}\frac{1}{n}\log d(f_{\underline{\omega}}^{-n}(x),f_{\underline{\omega}}^{-n}(y))\le -\lambda_i(x)\right\}.
$$
In addition, for every $1\le i \le d$, $W^{u,i}(\underline{\omega},x)$ is a $C^1$ sub-manifold tangent to $\bigoplus_{j=1}^{i} E(\underline{\omega},x)$ at $(\underline{\omega},x)$
with dimension equal to {\rm dim}$(\oplus_{j=1}^{i} E(\underline{\omega},x))$. Also, we have
$W^{u,1}(\underline{\omega},x)\subset \cdots \subset W^{u,d}(\underline{\omega},x)$.
\end{Proposition}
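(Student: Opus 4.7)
The plan is to reduce this to the random version of the Hadamard--Perron (stable/unstable manifold) theorem for nonuniformly hyperbolic random dynamical systems. Since the preceding proposition already provides the Oseledets decomposition $T_xM=\bigoplus_{i=1}^{r(x)}E_i(\underline{\omega},x)$ with well-defined random Lyapunov exponents, the task is to exponentiate the linear splitting to genuine invariant submanifolds, and the integrability hypotheses $(\ref{forLY})$ and $(\ref{forMA})$ are precisely what one needs to run Pesin-theoretic arguments in the random setting.

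First I would pass to the inverse cocycle $Df_{\underline{\omega}}^{-1}$ and reindex, so that the positive Lyapunov exponents $\lambda_1(x)>\cdots>\lambda_d(x)$ of $Df_{\underline{\omega}}$ become negative exponents of the inverse, and each candidate unstable manifold becomes a stable manifold for the backward random system. Fix a level $1\le i\le d$ and pick an auxiliary rate $\lambda_i(x)>\beta_i(x)>\lambda_{i+1}(x)$ (or $\beta_d(x)>0$ when $i=d$). Using the integrability of $\log^+|f_\omega^{-1}|_{C^2}$ from $(\ref{forMA})$, one constructs a measurable family of random Lyapunov (Pesin) charts $\Psi_{\underline{\omega},x}^n$ along the orbit $G^n(\underline{\omega},x)$ in which the splitting $\bigoplus_{j\le i}E_j\oplus\bigoplus_{j>i}E_j$ is orthogonal, the linear part of $f_{\omega_n}$ expands the first factor by at least $e^{\beta_i(x)-\varepsilon}$ and contracts the second by at most $e^{\beta_i(x)+\varepsilon}$, and the nonlinear remainder has Lipschitz constant controlled by a tempered function. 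Temperedness follows from the Birkhoff ergodic theorem applied to $\log^+|f_\omega^{-1}|_{C^2}$ using $G$-invariance of $\mu^G$.

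Next I would perform the graph transform in these charts. In each chart, the unstable candidate is the graph of a $C^1$ function from a small disk in $\bigoplus_{j\le i}E_j(\underline{\omega},x)$ to $\bigoplus_{j>i}E_j(\underline{\omega},x)$, and the dominated expansion/contraction rates make the random graph transform a uniform contraction on a suitable space of Lipschitz graphs; its fixed point is the local unstable manifold $W^{u,i}_{\mathrm{loc}}(\underline{\omega},x)$. Standard fiber-contraction arguments upgrade it to $C^1$ and show its tangent space at $x$ equals $\bigoplus_{j\le i}E_j(\underline{\omega},x)$, giving the claimed dimension. Saturating by backward random iterates, $W^{u,i}(\underline{\omega},x)=\bigcup_{n\ge 0}f_{\sigma^{-n}\underline{\omega}}^{n}\bigl(W^{u,i}_{\mathrm{loc}}(G^{-n}(\underline{\omega},x))\bigr)$ produces the immersed manifold, and its characterization as the set of $y$ with $\limsup_n\frac{1}{n}\log d(f_{\underline{\omega}}^{-n}x,f_{\underline{\omega}}^{-n}y)\le-\lambda_i(x)$ follows from the two-sided exponential estimate on the chart-level graphs.

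Finally the nesting $W^{u,1}\subset\cdots\subset W^{u,d}$ is automatic from the definition, since any $y$ backward-contracting at rate $\ge\lambda_i(x)$ automatically contracts at every slower rate $\lambda_j(x)$ with $j>i$; one only needs to verify that the smooth manifold produced by the graph transform at level $i$ is contained in the one at level $i+1$, which follows from the corresponding inclusion of graph spaces. The main obstacle I anticipate is purely technical: establishing temperedness of the random chart constants and uniformity of the Lipschitz remainder estimates under $(\ref{forMA})$, since, unlike the deterministic case, the $C^2$ norms of $f_\omega$ are not a priori uniformly bounded on $\mathrm{supp}(\nu_\varepsilon)$ when $\mathrm{supp}(\nu_\varepsilon)$ fails to be compact; here the compactness built into hypothesis $\mathbf{(H1)}$ together with continuity of $\mathscr{F}$ is exactly what makes the integrals finite, so the rest proceeds as in \cite[Chapter VI]{LiQ95}.
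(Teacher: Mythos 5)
Your outline is a correct sketch of the standard random Pesin unstable manifold theorem (Lyapunov charts along the two-sided orbit, tempered estimates supplied by the integrability conditions (\ref{forLY})--(\ref{forMA}), graph transform, and the automatic nesting from the definition), which is essentially the same route as the paper: the paper gives no proof of this proposition and simply invokes \cite[Chapter VI: Proposition 1.4]{LiQ95}, whose proof is precisely the argument you describe.
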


Now we assume that $\Lambda$ is an attractor of $f$ with dominated splitting $T_{\Lambda}M=E\oplus_{\succ}F$. Shrinking $\varepsilon$ if necessary, we assume $\varepsilon$ is less than $\varepsilon_0$ given in Lemma \ref{randomconti},
it follows that for $\mu^G$-a.e. $(\underline{\omega},x)$ there exists an integer $1\le d(x) \le r(x)-1$ such that
$$
E(\underline{\omega},x)=\bigoplus_{i=1}^{d(x)}E_i(\underline{\omega},x),~~ F(\underline{\omega},x)=\bigoplus_{i=d(x)+1}^{r(x)}E_i(\underline{\omega},x).
$$
Moreover, if the Lyapunov exponents of $\mu^G$ along $E$ are positive, namely $\lambda_{d(x)}(x)>0$ for $\mu^G$-a.e. $(\underline{\omega},x)$, then there exists a Pesin unstable manifold $W^{E,u}(\underline{\omega},x):=W^{u,d(x)}(\underline{\omega},x)$ tangent to $E$ for $\mu^G$-almost every $(\underline{\omega},x)$.

Analogous to the deterministic setting, we can introduce the measurable partition subordinate to Pesin unstable manifolds (in different levels), follows after \cite[Chapter VI: E]{LiQ95}.
\begin{Definition}
$\xi$ is called a measurable partition \emph{subordinate} to $W^{u,i}$ w.r.t. $\mu^G$ if for $\mu^G$-a.e $(\underline{\omega},x)$, $\xi(\underline{\omega},x)\subset \{\underline{\omega}\}\times M$, and $\xi_{\underline{\omega}}(x)=\{y:(\underline{\omega},y)\in \xi(\underline{\omega},x)\}\subset W^{u,i}(\underline{\omega},x)$ contains a neighborhood of $x$ open in $W^{u,i}(\underline{\omega},x)$.
\end{Definition}

Similarly, one can define the measurable partitions subordinate to $W^{E,u}$. Now we can give the precise definition of random Gibbs $cu$-state as below.

\begin{Definition}
We say $\mu^G$ is a Gibbs cu-state (associated to $E$), if the Lyapunov exponents of $\mu^G$ along $E$ are positive and  $\mu^G$ has absolutely continuous conditional measures along $\xi$ for any
measurable partition $\xi$ subordinate to $W^{E,u}$.
\end{Definition}

As a result of the regularity of the random perturbations, we have the following important fact.

\begin{Lemma}\label{rGibbs}
If the Lyapunov exponents of $\mu^G$ along $E$ are positive, then
$\mu^G$ is the random Gibbs cu-states associated to $E$.
\end{Lemma}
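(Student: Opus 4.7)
The plan is to derive the conclusion from the random Pesin theory developed in Liu--Qian [LiQ95], by exploiting the two ingredients made available to us: the absolute continuity of the noise (hypothesis \textbf{(H2)}), and the existence of a full set of random unstable manifolds tangent to $E$.

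First I would record the consequence of regularity. By \textbf{(H2)} and Lemma \ref{stationaryabs}, the stationary measure $\mu$ is absolutely continuous with respect to Lebesgue on $M$. This is really the only feature of $\mathscr{R}_f^\varepsilon$ that will be used, and it is the mechanism by which the smoothing effect of the noise is carried into the invariant structure of $\mu^G$. Moreover, by the hypothesis on Lyapunov exponents along $E$ and Proposition \ref{unstablerandom}, for $\mu^G$-a.e.\ $(\underline\omega,x)$ there exists a $C^1$ random unstable manifold $W^{E,u}(\underline\omega,x)$ tangent to $E(\underline\omega,x)$ with $\dim W^{E,u}=\dim E$. Fix a measurable partition $\xi$ subordinate to $W^{E,u}$; the goal is to prove that the conditional measures $\{\mu^G_{\xi(\underline\omega,x)}\}$ are absolutely continuous with respect to the Riemannian volume on the corresponding leaves.

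Next I would invoke the random Pesin theorem in the form established in [LiQ95, Chapter IV]. The scheme is: the absolute continuity of $\nu_\varepsilon$, transferred to $\mu$ via Step 1, forces the random relative entropy of $G$ with respect to the shift to attain the Margulis--Ruelle upper bound, i.e.\ the random Pesin entropy formula holds; and then the random Ledrappier--Young characterization converts this entropy equality into the absolute continuity of conditional measures along random Pesin unstable manifolds. Combined with the dominated $Df$-invariant decomposition $T_\Lambda M=E\oplus F$, which forces the Oseledets filtration of $\mu^G$ to respect the $E$/$F$-split, the partial unstable foliation tangent to $E$ is genuinely an $Df$-invariant absolutely continuous foliation on the random Pesin blocks. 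Hence the Liu--Qian absolute continuity statement applies directly to $W^{E,u}$ and to any $\xi$ subordinate to it, which is exactly the definition of a random Gibbs $cu$-state associated to $E$.

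The main obstacle will be Step 3, namely promoting the standard random Pesin/Ledrappier--Young output, which a priori gives absolute continuity along the full Pesin unstable lamination (corresponding to the union of \emph{all} positive Lyapunov exponents of $\mu^G$), to absolute continuity along the partial lamination $W^{E,u}$. If $F$ happens to have some positive exponents too, $W^{E,u}$ is a proper sub-foliation of the full unstable lamination, and one needs the refined Ledrappier--Young partial-dimension analysis applied to the $E$-block of the Oseledets decomposition. The domination $E\oplus_\succ F$ is the key tool here: it ensures that $E$ is $C^0$ (and on Pesin blocks Lipschitz) and $Df$-invariant, so Pesin's absolute continuity argument can be run intrinsically inside the $E$-direction, independently of what happens in $F$. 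This is the only delicate point; everything else is a verbatim application of the Liu--Qian machinery, justified by the absolute continuity of $\nu_\varepsilon$ built into the notion of a regular random perturbation.
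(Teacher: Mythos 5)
Your proposal follows essentially the same route as the paper: absolute continuity of the stationary measure (Lemma \ref{stationaryabs}) yields, via the Liu--Qian random SRB theorem, absolutely continuous conditional measures along the full random unstable manifolds, and the remaining issue is exactly the one you identify --- descending from $W^u$ to the partial lamination $W^{E,u}$ --- which the paper handles by the Ledrappier--Young construction of nested subordinate partitions together with essential uniqueness of disintegration. Your detour through the random Pesin entropy formula is merely the internal mechanism of the Liu--Qian theorem that the paper cites directly, so the two arguments coincide in substance.
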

\begin{proof}
We sketch the proof of this lemma.
By Lemma \ref{stationaryabs}, $\mu$ is absolutely continuous w.r.t. Lebesuge measure. This together with Theorem 1.1 of \cite[Chapter VI]{LiQ95} imply that $\mu$ has absolutely continuous conditional measures along any measurable partition subordinate to $W^u$. Use the same manner of \cite[Lemma 9.11]{LyA2} one can construct two measurable partitions $\xi \prec \xi^E$ ($\xi^E$ is finer than $\xi$) such that $\xi$ and $\xi^E$ are subordinate to $W^{u}$ and $W^{E,u}$, respectively. Thus, the essential uniqueness of conditional measures \cite[Exercise 5.21]{VO} implies  that $\mu$ has absolutely continuous conditional measures along $\xi^E$,  this shows that $\mu^G$ is a random Gibbs $cu$-state associated to $E$ by definition.
\end{proof}

Now we discuss the densities of conditional measures of random Gibbs $cu$-states along measurable partitions consisted of random local unstable disks that are backward contracted by a uniform rate.

\begin{Lemma}\label{fde}
Let $\cP$ be a measurable partition formed by disjoint endowed random local unstable disks with uniform radius, and all these random local unstable disks are $(C,\sigma)$-backward contracting for some $C>0$ and $\sigma\in (0,1)$.
Then there exists $K>0$ such that for any random Gibbs cu-state admitting positive measure on the union of the elements from $\cP$, the densities of the conditional measures of $\mu$ along $\cP$ are between $1/K$ and $K$.
\end{Lemma}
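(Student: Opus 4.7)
The plan is to use the standard Jacobian-product formula for the conditional densities of a random Gibbs $cu$-state, then show that this product is uniformly bounded thanks to the uniform backward-contraction rate and the $C^2$ regularity of the random maps.

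First, I would invoke the classical density formula from random Pesin theory (see \cite[Chapter VI]{LiQ95}): for $\mu^G$-a.e.\ $(\underline\omega,x)$ and any measurable partition $\xi$ subordinate to $W^{E,u}$, the conditional density $\rho_{(\underline\omega,x)}$ of $\mu^G$ on $\xi(\underline\omega,x)$ with respect to Lebesgue satisfies, for any two points $y,z$ in the same plate,
$$
\frac{\rho_{(\underline\omega,x)}(y)}{\rho_{(\underline\omega,x)}(z)}=\prod_{n=1}^{\infty}\frac{J^u\bigl(\sigma^{-n}\underline\omega,f_{\underline\omega}^{-n}(z)\bigr)}{J^u\bigl(\sigma^{-n}\underline\omega,f_{\underline\omega}^{-n}(y)\bigr)},\qquad J^u(\underline\omega,x):=\bigl|\det\bigl(Df_{\omega_0}|_{E(\underline\omega,x)}\bigr)\bigr|.
$$
By essential uniqueness of conditional disintegrations the same formula holds with $\xi$ replaced by the given partition $\cP$, whose plates are random local unstable disks of uniform radius.

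Second, I would bound the logarithm of each factor. Because ${\rm supp}(\nu_\varepsilon)\subset \mathcal{N}_{\varepsilon_0}$ is compact and $\cF$ is continuous into ${\rm Diff}^2(M)$, the function $(\omega,y,V)\mapsto \log|\det(Df_\omega|_V)|$ has uniformly bounded $C^1$-norm on the relevant compact domain of subspaces. Combined with the continuity of $E(\underline\omega,\cdot)$ from Lemma \ref{randomconti} and the uniform Lipschitz control of tangent planes along each unstable disk from Lemma \ref{hee}, this yields a constant $L$, depending only on $\cF$, $\varepsilon_0$, and $L_0$, such that for every $n\ge 1$ and every pair $y,z$ in a common plate of $\cP$,
$$
\bigl|\log J^u\bigl(\sigma^{-n}\underline\omega,f^{-n}_{\underline\omega}(y)\bigr)-\log J^u\bigl(\sigma^{-n}\underline\omega,f^{-n}_{\underline\omega}(z)\bigr)\bigr|\le L\,d\bigl(f^{-n}_{\underline\omega}(y),f^{-n}_{\underline\omega}(z)\bigr).
$$
Invoking the $(C,\sigma)$-backward contraction, $d(f^{-n}_{\underline\omega}(y),f^{-n}_{\underline\omega}(z))\le C\sigma^n d(y,z)\le 2CD\sigma^n$, where $D$ is the uniform diameter of the plates, and summing the geometric series gives
$$
\Bigl|\log\frac{\rho_{(\underline\omega,x)}(y)}{\rho_{(\underline\omega,x)}(z)}\Bigr|\le \frac{2LCD}{1-\sigma}=:K_0,
$$
a bound independent of the particular Gibbs $cu$-state $\mu^G$.

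Finally, to upgrade the ratio estimate to two-sided absolute bounds, I would normalize: on each plate $\gamma\in\cP$ the density $\rho_\gamma$ integrates to $1$ against ${\rm Leb}_\gamma$, and Lemma \ref{hee} together with the uniform radius forces ${\rm Leb}_\gamma(\gamma)$ to lie in a fixed interval bounded away from $0$ and $\infty$; combining this with the ratio bound $e^{\pm K_0}$ produces a universal $K$ such that $1/K\le\rho_\gamma\le K$. The main obstacle lies in the Lipschitz-type estimate of the second step: one must juggle three simultaneous sources of variation---the random label $\omega_{-n}$, the base point $f^{-n}_{\underline\omega}(\cdot)$, and the subspace $E(\sigma^{-n}\underline\omega,\cdot)$---and check that they all contribute Lipschitz terms controlled by the intra-leaf distance, uniformly over $\underline\omega\in\mathcal{N}_{\varepsilon_0}^{\mathbb{Z}}$ and over all candidate Gibbs $cu$-states. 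Once this uniformity is in place, the rest is a routine geometric-series bound.
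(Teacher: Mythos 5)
Your proposal is correct and follows essentially the same route as the paper's proof: the Jacobian-product formula for the conditional densities from \cite[Chapter VI]{LiQ95}, a Lipschitz bound on $\log J^E$ obtained from the uniform $C^2$ bounds on $f_{\underline{\omega}}$ together with Lemma \ref{hee}, a geometric-series estimate via the $(C,\sigma)$-backward contraction, and finally normalization using the uniform Lebesgue bounds on the plates. No substantive differences.
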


\begin{proof}
Rewrite
$$
\cP=\big\{\{\underline{\omega}\}\times \gamma: ~\textrm{each}~ \gamma~ \textrm{is}~(C,\sigma)\textrm{-backward contracting}\big\}.
$$
Let $P$ be the union of the element of $\cP$.  
Let $\alpha, \beta$ be the constants satisfying that for each $\{\underline{\omega}\}\times \gamma \in \cP$,
\begin{itemize}
\item $d(y,z)\le \alpha$ for every $y,z\in \gamma$;
\item $1/\beta \le {\rm Leb}(\gamma)\le \beta$.
\end{itemize}
Since $f_{\underline{\omega}}$ are $C^2$ diffeomrophisms nearby $f$ with a fixed radius, thus there exists a uniform bound on the Lipschitz constant of $Df_{\underline{\omega}}$. Combining this fact with Lemma \ref{hee} yields that for every $y,z$,
$$
|\log J^E(\underline{\omega},x)-\log J^E(\underline{\omega},y)|\le L_1d(x,y)
$$
for some constant $L_1>0$ depending only on $f$.
Therefore, for any random local unstable disk $\gamma$ which is $(C,\sigma)$-backward contracting, for every $y,z\in \gamma$ and any $n\in \NN$,
\begin{eqnarray*}
\sum_{j=1}^n\left|\log \frac{J^E(G^{-j}(\underline{\omega},y))}{J^E(G^{-j}(\underline{\omega},z))}\right|&=& \sum_{j=1}^{n}\left|\log J^E(G^{-j}(\underline{\omega},y))-J^E(G^{-j}(\underline{\omega},z))\right|\\
&\le &L_1\sum_{j=1}^n d(f_{\underline{\omega}}^{-j}(y), f_{\underline{\omega}}^{-j}(z))\\
&\le &L_1\sum_{j=1}^n C\sigma^j d(y,z)\\
&\le & \frac{L_1C\sigma \alpha}{1-\sigma}:=\log M.
\end{eqnarray*}
Consequently, we obtain
\begin{equation}\label{fd}
\frac{1}{M}\le \prod_{j=1}^{\infty}\frac{J^E(G^{-j}(\underline{\omega},y))}{J^E(G^{-j}(\underline{\omega},z))}\le M.
\end{equation}
Now let us take $\mu$ as a random Gibbs cu-state such that $\mu(P)>0$.
Let $\rho$ be the density of the conditional measures of $\mu$ along $\cP$ w.r.t. its Lebesgue measures.
Moreover, for $\hat{\mu}$-almost every $\{\underline{\omega}\}\times \gamma \in \cP$, the density $\rho$ has following formula (see \cite[ Chapter VI: Proposition 2.2
and Corollary 8.1]{LiQ95} or \cite[Proposition 2.1]{cv}):
$$
\frac{\rho(\underline{\omega},y)}{\rho(\underline{\omega},z)}=\prod_{j=1}^{\infty}\frac{J^E(G^{-j}(\underline{\omega},z))}
{J^E(G^{-j}(\underline{\omega},y))}
$$
for $\mu_{\{\underline{\omega}\}\times \gamma}$-almost every $(\underline{\omega},y)$ and $(\underline{\omega},z)$ in $\{\underline{\omega}\}\times \gamma$.
Moreover, we have
\begin{equation}\label{rho}
\rho(\underline{\omega},z)=\frac{\kappa((\underline{\omega},y),(\underline{\omega},z))}{\displaystyle{\int_\gamma \kappa((\underline{\omega},y).(\underline{\omega},z))d{\rm Leb}_{\gamma}(z)}},
\end{equation}
where
$$
\kappa((\underline{\omega},y),(\underline{\omega},z))=\prod_{j=1}^{\infty}\frac{J^u(G^{-j}(\underline{\omega},y))}{J^u(G^{-j}(\underline{\omega},z))}.
$$
By the choices of $\beta$ and the estimate (\ref{fd}),  the formula (\ref{rho}) yields
$$
\frac{1}{\beta M^2}\le \rho(\underline{\omega},z)\le \beta M^2.
$$
Thus, it suffices to take  $L=\beta M^2$ to complete the proof.

\end{proof}

\section{Absolutely continuous conditional measures}\label{se4}
As the main technical step for proving Theorem \ref{m2} and Theorem \ref{main}, we shall make effort to show the following result in this section.
\begin{theoremalph}\label{bac}
Let $f$ be a $C^2$ diffeomorphism with an attractor $\Lambda$ admitting a dominated splitting $T_{\Lambda} M=E\oplus F$.

If $\mu$ is a randomly ergodic limit satisfying $\lambda_E(\mu)>0$, then there exists $\delta>0$, $0<\sigma<1$ and a measurable partition $\cF$ with following properties:
\begin{itemize}
\item the union of elements from $\cF$ has positive $\mu$-measure;
\item each $\gamma\in \cF$ is a $\sigma$-backward contracted local unstable disk with radius $\delta$;
\item $\mu$ has absolutely continuous conditional measures along $\cF$.
\end{itemize}
\end{theoremalph}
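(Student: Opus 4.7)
\emph{Reduction to the random Gibbs $cu$-state setting.} Since $\mu$ is a randomly ergodic limit, fix a regular random perturbation $\mathscr{R}_f=\{\mathscr{F},(\nu_{\varepsilon})_{\varepsilon>0}\}$ and a sequence of ergodic stationary measures $\mu_n$ for $\nu_{\varepsilon_n}$ with $\varepsilon_n\to 0$ and $\mu_n\to \mu$. By Corollary \ref{toinv} the associated $G$-invariant measures satisfy $\mu_n^G\to \mu^G:=\delta_{\omega_f}^{\ZZ}\times \mu$. The random sub-bundle $E$ is continuous on $\Lambda_{\varepsilon_0}$ (Lemma \ref{randomconti}) and restricts to the deterministic $E$ on $\{\omega_f^{\ZZ}\}\times \Lambda$, so the cocycle
$$\varphi(\underline{\omega},x):=-\log\|Df_{\omega_0}^{-1}|_{E(G(\underline{\omega},x))}\|$$
is continuous on a neighborhood of ${\rm supp}(\mu^G)$. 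Weak-$*$ convergence then gives $\int \varphi\, d\mu_n^G\to \lambda_E(\mu)>0$, so there is $\lambda_0>0$ with $\int \varphi\, d\mu_n^G\ge \lambda_0$ for all large $n$. By Lemma \ref{rGibbs} every such $\mu_n^G$ is a random Gibbs $cu$-state associated to $E$.

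\emph{Pliss step.} Fix a large $n$. Applying the Birkhoff theorem to $G^{-1}$ and $\varphi$ followed by the Pliss lemma (in the standard form), together with the uniform $C^2$-bound enjoyed by $f_\omega$ for $\omega\in \mathscr{F}({\rm supp}\,\nu_{\varepsilon_0})$, yields constants $\sigma\in(0,1)$, $\theta>0$ and $\delta>0$, all independent of $n$, and a measurable set $H_n\subset \Lambda_{\varepsilon_n}$ with $\mu_n^G(H_n)\ge \theta$, such that every $(\underline{\omega},x)\in H_n$ is a backward hyperbolic point at rate $\sigma$, i.e.
$$\prod_{j=0}^{k-1}\|Df_{\sigma^{-j-1}\underline{\omega}}^{-1}|_{E(G^{-j}(\underline{\omega},x))}\|\le \sigma^k \qquad (k\ge 1).$$
A standard Hadamard--Perron / graph transform argument applied uniformly in $\omega$ then produces, at each such $(\underline{\omega},x)$, a $\sigma$-backward contracted random local unstable disk $W^{E,u}_{\delta}(\underline{\omega},x)$ of radius $\delta$ in the sense of Definition \ref{bck}.

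\emph{Uniform densities and extraction of the limit partition.} For every large $n$ aggregate the disks above into a measurable partition $\cP_n$ of $H_n$ by endowed random local unstable disks, as in \cite{ABV00}. Lemma \ref{fde} applied to the random Gibbs $cu$-state $\mu_n^G$ gives a uniform constant $K>0$ such that the densities of the conditional measures of $\mu_n^G$ along $\cP_n$ lie in $[1/K,K]$. Lemma \ref{hee} equips the family with a common Lipschitz bound on the tangent map $y\mapsto T_y\gamma$, rendering the collection of all disks appearing in the $\cP_n$ precompact in the $C^1$ topology. Passing to a subsequence, a positive-$\mu_n^G$-mass portion of the disks of $\cP_n$ converges in $C^1$; since $\varepsilon_n\to 0$, the limit disks are tangent to the deterministic $E$ and inherit the $\sigma$-backward contraction under $f$, producing a measurable family $\cF$ of $\sigma$-backward contracted local unstable disks of radius $\delta$ whose union has $\mu$-measure at least $\theta$.

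\emph{Disintegration in the limit (the main obstacle).} The delicate remaining point is to transport the uniform density bounds from $\mu_n^G$ along $\cP_n$ to $\mu$ along $\cF$. I would argue locally: cover the union of $\cF$ by finitely many foliation boxes $\Pi\cong D\times T$, where $D$ is a disk transverse to $\cF$ and $T$ parameterises the leaves; use the $C^1$ convergence to realise $\Pi$ as the $C^1$-limit of matching boxes $\Pi_n$ for $\cP_n$ for $n$ large; disintegrate $\mu_n^G |_{\Pi_n}$ as an integral of densities $\rho_n\in[1/K,K]$ against the transverse Lebesgue measure on $T$; and extract a weak-$*$ limit of these densities via Banach--Alaoglu. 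The limiting densities still lie in $[1/K,K]$ and, thanks to $\mu_n^G\to\mu^G$ and the $C^1$ convergence of the boxes, realise the disintegration of $\mu^G$ (equivalently of $\mu$) along $\cF$, yielding the required absolute continuity.
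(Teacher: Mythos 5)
Your overall skeleton matches the paper's: pass to the skew product, use $\mu_n^G\to\delta_{\omega_f}^{\ZZ}\times\mu$, produce a uniform-measure Pliss block of backward contracting points with unstable disks of uniform size, invoke Lemma \ref{rGibbs} and Lemma \ref{fde} for two-sided density bounds, and pass to the zero-noise limit. Two points, however, deserve attention; the second is a genuine gap.

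First, a minor issue: the ``standard form'' of the Pliss lemma, applied to a finite Birkhoff segment, only yields times at which the backward products are controlled \emph{up to the length of that segment}, whereas membership in $\cH^G_{\lambda_0}$ requires $\prod_{j=0}^{k-1}\|Df^{-1}_{\sigma^{-j}\underline{\omega}}|_{E(G^{-j}(\underline{\omega},x))}\|\le\lambda_0^{k}$ for \emph{all} $k\ge1$. The paper has to upgrade Pliss to an infinite-horizon statement (Proposition \ref{Lem:Pliss-infinite}, built on Lemma \ref{Lem:MCYPliss}) to get a positive-density set of points satisfying all infinitely many inequalities, with density $\rho$ independent of $n$. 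This is a known refinement, but it is not the standard Pliss lemma and should be supplied.

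Second, and more seriously, your final step does not close. You work with $n$-dependent partitions $\cP_n$ of $n$-dependent sets $H_n$, and then propose to take $C^1$-limits of the disks and weak-$*$ limits of the densities over ``matching boxes $\Pi_n\to\Pi$''. Portmanteau controls $\mu_n^G$ of a \emph{fixed} set with negligible boundary, not of a moving sequence of boxes, and disintegration is not continuous under weak-$*$ convergence: the identification ``weak-$*$ limit of the conditional densities of $\mu_n^G$ along $\cP_n$ equals the conditional density of $\mu^G$ along $\cF$'' is precisely the assertion to be proved, and it is left as an assertion. You also do not address why the $C^1$-limits of a positive-mass subfamily of disks form a \emph{measurable partition} (pairwise disjointness and measurability of the limit family). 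The paper avoids all of this by a different device: the Pesin block $\cH^G_{\lambda_0}$ is a single compact set, independent of $n$, charged by $\mu^G$ and by all $\mu_n^G$ for large $n$; from it one builds \emph{one fixed} foliated chart $\Phi:\cK\times\DD^{\dim E}\to\cB$ with $\mu^G(\partial\cB)=0$ (Proposition \ref{forc}), so that every $\mu_n^G$ and the limit $\mu^G$ are disintegrated along the \emph{same} partition $\cF^G$. The uniform bound from Lemma \ref{fde} then transfers to the limit by the elementary criterion of Lemma \ref{vp} applied to $\nu_n=\mu_n^G\circ\Phi\to\nu=\mu^G\circ\Phi$ on the product $\cK\times\DD^{\dim E}$. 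To repair your argument you should replace the $n$-dependent partitions by such a fixed chart built from $\cH^G_{\lambda_0}$ (which contains points over all $\underline{\omega}\in\cN_0^{\ZZ}$, hence is seen by every $\mu_n^G$), and then your density estimate becomes an inequality $\nu_n(\xi\times\eta)\le C\,{\rm Leb}(\eta)\,\hat\nu_n(\xi)$ that survives the limit by portmanteau.
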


We will complete the proof of Theorem \ref{bac} in $\S$\ref{se44}, here
we briefly outline the strategy for proving Theorem \ref{bac}. To begin with, let us fix ergodic stationary measures $\mu_n$ converges to $\mu$, consider $\mu_n^G$ and $\mu^G$ as the corresponding $G$-invariant measures of $\mu_n$ and $\mu$ introduced in Lemma \ref{Lem:invariant-measure-G}.

Firstly, we show that there exists a ``Pesin block" $\cH^G_{\lambda_0}$ for some $\lambda_0\in (0,1)$ in $\Omega^{\ZZ}\times M$ possessing positive $\mu_n^G$ measure for all sufficiently large $n$, which consists of points admitting random local unstable disks tangent to $E$ of uniform size and backward contraction rate. This is done in $\S$\ref{se41} and $\S$\ref{se43}.

Secondly, we get a measurable partition $\cF^G$ formed by random local unstable disks generated by points from $\cH^G_{\lambda_0}$, which is induced by a foliated chart established in lift space $\Omega^{\ZZ}\times M$, this is a consequence of Proposition \ref{forc}.

Finally, by the construction of $\cF^G$ one knows that the densities of disintegrations of $\mu_n^G$ along $\cF^G$ are uniformly bounded from above and from below by applying Lemma \ref{fde}, then passing to the limit as noise level tends zero, we get the desired absolute continuity of $\mu$ along a measurable partition $\cF$, where $\cF$ is a family of local unstable disks arising from $\cF^G$ .

\smallskip
\paragraph{\textbf{Settings of this section}.}
Throughout this section, let $\Lambda$ be an attractor with dominated splitting $T_{\Lambda}M=E\oplus_{\succ}F$. Let us take $\varepsilon_0$ as in Lemma \ref{randomconti} and thus, one can fix constant $a>0$ such that $\rho\left(E(\underline{\omega},x),E(x)\right)\le a$ for every $(\underline{\omega},x)\in \Lambda_{\varepsilon_0}$. Moreover, we assume that all the noise levels considered in this section are bounded from above by $\varepsilon_0$.


\subsection{Unstable disks at backward contracting points}\label{se41}
%

Given $\lambda \in (0,1)$, consider the following ``Pesin block":
$$
\cH^G_{\lambda}=\left\{(\underline{\omega},x):  \prod_{i=0}^{n-1} \|Df^{-1}_{\sigma^{-i}\underline{\omega}}|_{E(G^{-i}(\underline{\omega},x))}\|\le \lambda^{n},\quad \forall n\in\NN \right\}.
$$
Analogously, we define
$$
\mathcal{H}_{\lambda}=\left\{x:  \prod_{i=0}^{n-1}\|Df^{-1}|_{E(f^{-i}x)}\| \le \lambda^n, \quad \forall n\in \NN \right\}.
$$

\begin{Definition}\label{bck}
Given $\delta>0$, $\lambda\in (0,1)$,
we say $(\underline{\omega},x)$ is $\lambda$-\emph{backward contracting} if $(\underline{\omega},x)\in \cH^G_{\lambda}$. Similarly, we also say a point $x$ is $\lambda$-\emph{backward contracting} if it is contained in $\cH_{\lambda}$.
\end{Definition}

The lemmas below stress that any backward contracting points inherits the (random) local unstable disk, it is a folklore result which can be deduced from the Plaque family theorem \cite{HPS77}, see \cite[Theorem 4.7]{AB12} for a direct proof.


\begin{Lemma}\label{lu}
Given $\lambda\in (0,1)$, there exist $\sigma \in (\lambda, 1)$ and $\delta=\delta(\lambda)>0$, such that if $(\underline{\omega},x)\in \cH_{\lambda}^G$,
then there exists a random local  unstable disk $W_{\delta}^{E,u}(\underline{\omega},x)$ of $(\underline{\omega},x)$ that is $\sigma$-backward contracting.
\end{Lemma}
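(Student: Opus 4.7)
The plan is to run a graph-transform argument along the backward orbit $\{G^{-n}(\underline{\omega},x)\}_{n\ge 0}$ inside $\Lambda_{\varepsilon_0}$, treating Lemma \ref{lu} as the random-dynamics analogue of the standard plaque family construction at a backward-contracting point. The key structural inputs are the domination $E \oplus_{\succ} F$, the uniform $\lambda$-backward contraction along $E$ given by $(\underline{\omega},x)\in \cH^G_\lambda$, and the uniform $C^2$ control on the fibre maps $f_{\omega}$ that comes from compactness of $\mathscr{F}(\mathcal{N}_{\varepsilon_0})$ in ${\rm Diff}^2(M)$.

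First I would fix charts. At each point $x_n := \pi_M(G^{-n}(\underline{\omega},x))$ pick an exponential-type chart $\phi_n : B_{E(\underline{\omega}_n,x_n)}(0,r) \times B_{F(\underline{\omega}_n,x_n)}(0,r) \to M$ aligned with the random splitting from Lemma \ref{randomconti}, where $\underline{\omega}_n = \sigma^{-n}\underline{\omega}$. In these coordinates the map $f_{\omega_{-n-1}}^{-1}$ from a neighbourhood of $x_n$ to a neighbourhood of $x_{n+1}$ has block form with diagonal blocks whose norms are bounded by $\|Df^{-1}_{\sigma^{-n}\underline{\omega}}|_{E(G^{-n}(\underline{\omega},x))}\|$ on the $E$-block and by $\|Df_{\sigma^{-n-1}\underline{\omega}}|_{F(G^{-n-1}(\underline{\omega},x))}\|^{-1}$ on the $F$-block; domination forces the $E$-block to dominate the $F$-block by a uniform ratio $\tau$. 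Let $\mathcal{G}$ be the space of sequences $\{g_n\}_{n\ge 0}$ of $C^1$ maps $g_n : B_{E(\underline{\omega}_n,x_n)}(0,\delta) \to F(\underline{\omega}_n,x_n)$ with $g_n(0)=0$, $\|g_n\|_{C^0}\le \delta$ and slope at most $a$ (the cone-field width). The graph transform $T$ sends $\{g_n\}$ to the sequence $\{g_n'\}$ whose graphs are the preimages, under the chart-local $f_{\omega_{-n-1}}^{-1}$, of the graph of $g_{n+1}$, cut off at radius $\delta$.

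Next I would verify that $T$ preserves $\mathcal{G}$ and contracts in a suitable sup metric. Cone invariance under $Df^{-1}$ (a direct consequence of the domination, with $a$ chosen from $\tau$) gives the slope bound for $T\{g_n\}$; the $\lambda$-backward contraction along $E$ combined with the bounded nonlinearities and smallness of $\delta$ gives the $C^0$-contraction of $T$. The unique fixed point $\{g_n^*\}$ defines random local unstable disks $\Gamma_n$ at each $x_n$, and the $0$-th one is the candidate $W^{E,u}_\delta(\underline{\omega},x)$. Backward invariance, namely $f_{\underline{\omega}}^{-1}\Gamma_n \supset \Gamma_{n+1}$, is built into the construction, so points of $\Gamma_0$ have genuine backward orbits staying on the family. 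Tangency of $\Gamma_0$ to $E(\underline{\omega},\cdot)$ follows from the slope bound $a$ and the usual argument that the graph's differential lies in the unique $Df^{-1}$-invariant sub-cone, which is $E$.

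Finally I would extract the contraction rate $\sigma$. For $y,z \in \Gamma_0$, write $f_{\underline{\omega}}^{-n}(y),f_{\underline{\omega}}^{-n}(z)\in \Gamma_n$ and estimate their distance in the chart at $x_n$: the $E$-component is contracted by at most $\prod_{i=0}^{n-1}\|Df^{-1}_{\sigma^{-i}\underline{\omega}}|_{E(G^{-i}(\underline{\omega},x))}\|\cdot(1+O(\delta))^n\le (\lambda(1+O(\delta)))^n$, and the $F$-component is dominated by the $E$-component through the slope $\le a$. Choosing $\sigma\in(\lambda,1)$ first and then $\delta=\delta(\lambda)$ small enough so that $\lambda(1+O(\delta))\le \sigma$ and $\|Df^{-n}|_{E(\underline{\omega},y)}\|\le \sigma^n$ for $y\in W^{E,u}_\delta(\underline{\omega},x)$ (using $C^1$-closeness of the chart-linearisation to the differential) yields the two conditions of Definition \ref{bck}. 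The main obstacle I foresee is keeping $\delta$ uniform over $\cH^G_\lambda$: this hinges on the uniform cone field from Lemma \ref{randomconti} and on a Lipschitz-modulus bound for $Df_\omega$ uniform in $\omega\in \mathcal{N}_{\varepsilon_0}$, both of which are available so the argument closes.
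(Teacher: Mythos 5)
The paper offers no proof of this lemma, deferring to the plaque family theorem of Hirsch--Pugh--Shub and to the direct graph-transform proof in Avila--Bochi, and your backward-orbit graph transform is precisely that standard argument, with all the essential ingredients in place: cone invariance from domination, the product bound from $\cH^G_{\lambda}$ driving the contraction, uniform $C^2$ control giving a $\delta$ uniform over the Pesin block, and the bootstrap comparing derivatives on the disk with those along the central orbit. The one point to tidy up is the well-definedness of the one-step transform when an individual factor $\|Df^{-1}_{\sigma^{-i}\underline{\omega}}|_{E}\|$ exceeds $1$ (only the product over the orbit is controlled), which is handled as in the cited references by globalizing the chart maps outside a small ball or by running the transform at a larger radius $r$ and claiming $\sigma$-contraction only on the sub-disk of radius $\delta\ll r$ --- a routine repair.
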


We have deterministic version of Lemma \ref{lu} as follows:
\begin{Lemma}\label{cu}
Given $\lambda\in (0,1)$, there exist $\sigma \in (\lambda, 1)$ and $\delta=\delta(\lambda)>0$,  such that if $x\in \cH_{\lambda}$, then there exists a local unstable disk $W^{E,u}_{\delta}(x)$ of $x$ which is $\sigma$-backward contracting.
\end{Lemma}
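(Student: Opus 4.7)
\medskip

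\noindent\textbf{Proof plan for Lemma \ref{cu}.}

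The plan is to reduce the statement to the Plaque family theorem of Hirsch--Pugh--Shub applied to the dominated splitting $E\oplus_{\succ}F$ on $\Lambda$, and then upgrade the plaques at points of $\cH_{\lambda}$ to genuinely $\sigma$-backward contracting disks using the cone field $\cE_a$. Concretely, my first step is to invoke the Plaque family theorem to obtain, for some uniform radius $\delta_0>0$ and some small cone width $a>0$, a continuous family of $C^1$ embedded disks $\{D_x\}_{x\in\Lambda}$ such that $\dim D_x=\dim E$, $T_xD_x=E(x)$, $T_yD_x\subset \cE_a(y)$ for every $y\in D_x$, and such that the family is locally $f^{-1}$-invariant: $f^{-1}(D_x\cap B(x,\delta_0))\subset D_{f^{-1}x}$. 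This is the deterministic analogue of the plaques used to prove Lemma \ref{lu}; the fact that the relevant dominated splitting lives only on $\Lambda$ but extends continuously to the trapping region $U$ is exactly what lets us apply the theorem in this setting.

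Next I would fix the target contraction rate. Given $\lambda\in(0,1)$, choose $\sigma\in(\lambda,1)$ and then pick $\eta>0$ together with $a>0$ small enough that $(1+Ca)(1+\eta)\lambda\le\sigma$, where $C$ is the constant coming from the cone-width bound $\|Df^{-1}|_V\|\le(1+Ca)\|Df^{-1}|_{E(z)}\|$ for every $V\subset\cE_a(z)$. By uniform continuity of $z\mapsto\|Df^{-1}|_{E(z)}\|$ on the compact set $\overline{U}$, there exists $r>0$ such that whenever $d(y,z)\le r$ one has $\|Df^{-1}|_{E(y)}\|\le(1+\eta)\|Df^{-1}|_{E(z)}\|$. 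I would then set $\delta=\delta(\lambda)\le\min\{\delta_0,r\}$ and define $W^{E,u}_\delta(x):=D_x\cap B(x,\delta)$ for each $x\in\cH_\lambda$.

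The core of the argument is an induction on $n$ showing simultaneously that, for every $x\in\cH_\lambda$ and every $y,z\in W^{E,u}_\delta(x)$,
\begin{enumerate}
\item[(i)] $f^{-n}(W^{E,u}_\delta(x))\subset D_{f^{-n}x}\cap B(f^{-n}x,\delta)$;
\item[(ii)] $\|Df^{-n}|_{E(y)}\|\le\sigma^n$;
\item[(iii)] $d(f^{-n}y,f^{-n}z)\le\sigma^n d(y,z)$.
\end{enumerate}
For the inductive step one applies the cone bound to the tangent direction $T_{f^{-i}y}(f^{-i}D_x)\subset\cE_a(f^{-i}y)$, uses (i) to guarantee $d(f^{-i}y,f^{-i}x)\le r$ (so the continuity estimate on $E$ applies), and then multiplies: the factor $(1+Ca)(1+\eta)\|Df^{-1}|_{E(f^{-i}x)}\|$ accumulates to at most $\sigma^n$ by the definition of $\cH_\lambda$. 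Combined with $\delta\le r$, (iii) feeds back into (i) at step $n+1$, closing the induction; (ii) for $y$ on the disk then follows from the same cone-to-$E$ comparison at $y$ itself.

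The main obstacle is the circularity between (i) and (iii): one needs the backward orbit of the plaque to stay close to the backward orbit of $x$ in order to invoke the continuity bound on $\|Df^{-1}|_E\|$, but that closeness is exactly what contraction must supply. This is resolved by the careful choice of $\delta\le r$ and by the cone-width bound from Lemma \ref{hee}, which keeps the tangent directions along $f^{-n}(D_x)$ uniformly close to $E$ so that the cone constant $C$ does not deteriorate under iteration. Once this induction closes, (iii) with constant $C=1$ and rate $\sigma$ gives the $(1,\sigma)$-backward contraction in Definition \ref{bck}, and (ii) gives the pointwise bound $\|Df^{-n}|_{E(y)}\|\le\sigma^n$, completing the proof.
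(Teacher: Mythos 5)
Your argument is correct and is exactly the route the paper itself indicates: it gives no proof of Lemma \ref{cu}, calling it folklore deducible from the Plaque family theorem of \cite{HPS77} and pointing to \cite[Theorem 4.7]{AB12}, which is precisely the plaque-family-plus-Pliss-point induction you carry out. One tiny remark: the uniform proximity of $T(f^{-n}D_x)$ to $E$ should be attributed to the local $f^{-1}$-invariance of the plaques together with their tangency to $\cE_a$ (as you in fact use in item (i)), not to Lemma \ref{hee}, whose hypothesis already assumes backward contraction and would make that citation circular.
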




\subsection{A foliated chart of random unstable disks}\label{se42}
Denote by $\DD^k$ the $k$-dimensional compact unit ball of $\RR^k$.

\begin{Definition}\label{Def:foliated-chart}
Let $\cK$ be a compact metric space. A \emph{foliated chart} associated to $\cK$ is a homeomorphism $\Phi:~\cK \times \DD^{k}\rightarrow \cB$ such that
\begin{itemize}

\item $\Phi_p=\Phi|\{p\}\times \DD^k$ is a diffeomorphism for each $p\in \cK$.

\item $\Phi_p$ maps $\DD^k$ to disjoint (endowed random) local unstable disks with dimension $k$.
\end{itemize}
\end{Definition}

We will also say the above $\Phi$ is a foliated chart of (endowed random) local unstable disks.
It follows from Definition \ref{Def:foliated-chart} that
any foliated chart induces a measurable partition formed by disjoint (endowed random) local unstable disks. This suggests that one can also describe the absolute continuity via foliated charts.

More precisely, let $\mu$ be a Borel probability, one gets that $\mu$ has absolutely continuous conditional measures along the measurable partition induced by a foliated chart $\Phi$, if and only if the pullback $\nu:=\Phi^{\ast}\mu=\mu \circ \Phi$ has the absolutely continuous conditional measures along the ``vertical lines" $\{\{x\}\times \DD^k: x\in \cK\}$, which means that there exists a measurable function $\rho:\cK\times \DD^k\longrightarrow [0,\infty)$ such that
$$
\nu(A)=\int_A \rho(x,y){\rm d}{\rm Leb}_{\DD^k}(y){\rm d}\hat{\nu}(x)
$$
for every measurable subset $A$ of $\cK\times \DD^k$, where $\hat{\nu}$ is the quotient measure of $\nu$ w.r.t. ``vertical lines" defined by $\hat{\nu}(\xi)=\nu(\xi \times \DD^k)$ for measurable $\xi \subset \cK$.

We will use the following argument (see e.g. \cite[Proposition 7.3]{v99}).
\begin{Lemma}\label{vp}Let $\cK$ be a compact metric space and $\nu$ a measure on product space $\cK\times \DD^k$, if there exists $C>0$ such that for every open subset $\xi\subset \cK$ satisfying $\hat{\nu}(\partial{\xi})=0$ and open subset $\eta \subset \DD^k$ one has
$$
\nu(\xi\times \eta)\le C\cdot {\rm Leb}_{\DD^k}(\eta)\hat{\nu}(\xi),
$$
then $\nu$ has absolutely continuous conditional measures along the vertical lines with density bounded from above by $C$.
\end{Lemma}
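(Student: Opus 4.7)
The plan is to use Rokhlin's disintegration theorem to convert the target absolute continuity with density bound into a pointwise inequality on conditional measures, and then to bootstrap the hypothesized rectangle-wise inequality from its restricted class up to every Borel rectangle in $\cK\times\DD^k$.

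By Rokhlin disintegration, let $\{\nu_x\}_{x\in\cK}$ be the conditional measures on $\DD^k$ so that $\nu(A)=\int \nu_x(A_x)\,{\rm d}\hat\nu(x)$ with $A_x=\{y:(x,y)\in A\}$; the target is that $\nu_x\ll{\rm Leb}_{\DD^k}$ with density at most $C$ for $\hat\nu$-a.e. $x$. Fix the countable $\pi$-system $\cC$ of half-open dyadic subcubes of $\DD^k$ (intersected with $\DD^k$); the features I will use are that $\cC$ is closed under finite intersection and that every open $V\subset\DD^k$ is a countable \emph{disjoint} union of members of $\cC$. I claim that once one has
$$
\nu(B\times\eta)\le C\cdot{\rm Leb}_{\DD^k}(\eta)\,\hat\nu(B)
$$
for every Borel $B\subset\cK$ and every $\eta\in\cC$, the conclusion follows. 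Indeed, for each fixed $\eta\in\cC$ the set $B_\eta:=\{x:\nu_x(\eta)>C\cdot{\rm Leb}_{\DD^k}(\eta)\}$ is $\hat\nu$-null by testing the displayed inequality with $B=B_\eta$. The countable intersection $\cK_0=\cK\setminus\bigcup_{\eta\in\cC}B_\eta$ has full $\hat\nu$-measure, and for $x\in\cK_0$ the disjoint-tiling property of $\cC$ gives $\nu_x(V)\le C\cdot{\rm Leb}_{\DD^k}(V)$ for every open $V\subset\DD^k$; outer regularity of Lebesgue measure then extends the bound to every Borel $\eta\subset\DD^k$.

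The substantive step is the bootstrap of the hypothesis from open $\xi\subset\cK$ with $\hat\nu(\partial\xi)=0$ to every Borel $B\subset\cK$ (with $\eta\in\cC$ fixed). For this I would first construct a countable base $\{U_j\}$ of opens of $\cK$ with $\hat\nu(\partial U_j)=0$: pick a countable dense subset of $\cK$ and, for each of its points, retain balls whose radius avoids the at-most-countable set on which the corresponding sphere has positive $\hat\nu$-mass. Finite unions of the $U_j$ inherit the null-boundary property, and every open $\xi\subset\cK$ is an increasing countable union of such finite unions; the hypothesis thus propagates to all open $\xi\subset\cK$ by continuity from below of the two finite Borel measures $\hat\nu$ and $\nu(\,\cdot\,\times\eta)$. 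Outer regularity on the compact metric space $\cK$ then extends the inequality to every Borel $B$. Finally, for each fixed Borel $B\subset\cK$, outer regularity of ${\rm Leb}_{\DD^k}$ on the compact $\DD^k$ transfers the inequality from open $\eta$ to every $\eta\in\cC$.

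The main obstacle will be bookkeeping the several null sets appearing along these extensions: each step of the bootstrap and each member $\eta\in\cC$ introduces a $\hat\nu$-null exceptional set, and one must verify that their countable union remains null so that a single $\hat\nu$-typical $x\in\cK_0$ supports the pointwise bound simultaneously. The only mildly nonstandard ingredient is the production of the countable base of opens in $\cK$ with $\hat\nu$-null boundary; everything beyond that is routine regularity and monotone-class machinery.
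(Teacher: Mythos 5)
Your argument is correct and complete. Note that the paper does not actually prove Lemma \ref{vp}; it only cites \cite[Proposition 7.3]{v99}, so you are supplying a proof the paper omits. Your route is essentially the standard one, organized slightly differently from Viana's notes: there the conditional measures along vertical lines are built concretely as weak$^*$ limits of normalized slices over a refining sequence of partitions of $\cK$ with $\hat\nu$-null boundaries, and the density bound $C$ is read off from the rectangle inequality along the construction; you instead invoke Rokhlin's theorem abstractly and then verify the bound $\nu_x(\eta)\le C\,{\rm Leb}_{\DD^k}(\eta)$ for $\hat\nu$-a.e.\ $x$ by the null-set test $B=B_\eta$ over a countable generating family. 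The two nontrivial ingredients you identify are handled correctly: the extension of the hypothesis from null-boundary opens to all Borel sets of $\cK$ (via a countable base of balls with $\hat\nu$-null spheres, monotone limits, and outer regularity of finite Borel measures on a compact metric space), and the passage from the countable family $\cC$ to all Borel subsets of $\DD^k$ (disjoint dyadic tiling of opens plus outer regularity of Lebesgue measure); the bookkeeping of exceptional sets is unproblematic since only countably many null sets arise. Your approach buys a cleaner logical separation (measure-theoretic bootstrap first, then a pointwise a.e.\ statement), at the cost of invoking Rokhlin's theorem, which the construction in \cite{v99} effectively reproves in this special product setting.
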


%

\begin{Proposition}\label{forc}
Let $f$ be a $C^2$ diffeomorphism with an attractor $\Lambda$ admitting dominated splitting $T_{\Lambda} M=E\oplus_{\succ} F$. Given $0<\lambda_0<1$, assume that $\{\mu_n\}_{n\in \mathbb{N}}$ is a sequence of stationary measures for $\nu_n$ so that $\liminf_{n\to \infty}\mu_n^G(\cH^G_{\lambda_0})>0$, and $\lim_{n\to +\infty}\mu_n=\mu$ for some stationary measure $\mu$.

Then there exist $0<\delta<\delta_u$, and a foliated chart $\Phi: \cK\times \DD^{{\rm dim}E} \rightarrow \cB$ such that
\begin{itemize}
\item $\mu^G(\partial(\cB))=0$, $\mu^G(\cB)>0$ and $\mu_n^G(\cB)>0$ for all sufficiently large $n$.
\item each $\Phi_{(\underline{\omega},x)}(\DD^{{\rm dim}E} )=\{\underline{\omega}\}\times \gamma_{\delta}^u(\underline{\omega},x)$, where $\gamma_{\delta}^u(\underline{\omega},x)\subset W^{E,u}_{\delta_u}(\underline{\omega},y)$ is a random local unstable disk with radius $\delta$ around $x$, for some $(\underline{\omega},y)\in \cH^G_{\lambda_0}$.
\end{itemize}
\end{Proposition}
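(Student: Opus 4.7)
The plan is to build the foliated chart on a neighborhood of a point of $\mathrm{supp}(\mu^G)$ that lies in the Pesin block $\cH^G_{\lambda_0}$, using the uniform-size backward-contracting plaques supplied by Lemma \ref{lu}. First I would verify that $\cH^G_{\lambda_0}$ is closed in $\Lambda_{\varepsilon_0}$, and hence compact: by Lemma \ref{randomconti}, the random sub-bundle $E(\underline{\omega},x)$ depends continuously on $(\underline{\omega},x)$, so each inequality $\prod_{i=0}^{n-1}\|Df^{-1}_{\sigma^{-i}\underline{\omega}}|_{E(G^{-i}(\underline{\omega},x))}\| \le \lambda_0^n$ cuts out a closed set. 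Lemma \ref{lu} then provides constants $\sigma \in (\lambda_0,1)$ and $\delta_u > 0$ so that every $p \in \cH^G_{\lambda_0}$ carries a $\sigma$-backward contracting random local unstable disk $W^{E,u}_{\delta_u}(p)$, and the standard plaque-family construction (graph transform) combined with the uniform Lipschitz control of tangent planes from Lemma \ref{hee} shows that the assignment $p \mapsto W^{E,u}_{\delta_u}(p)$, regarded as a $C^1$ graph over $E(p)$, is continuous on $\cH^G_{\lambda_0}$.

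Next I would transfer positive measure from $\mu_n^G$ to $\mu^G$. After passing to a subsequence so that $\nu_n$ converges on $\Omega$ (implicit in the regular-perturbation setup), Lemma \ref{Lem:invariant-measure-G} yields $\mu_n^G \to \mu^G$; applying the portmanteau theorem to the closed set $\cH^G_{\lambda_0}$ gives
\[
\mu^G(\cH^G_{\lambda_0}) \;\ge\; \limsup_{n\to\infty} \mu_n^G(\cH^G_{\lambda_0}) \;\ge\; \liminf_{n\to\infty} \mu_n^G(\cH^G_{\lambda_0}) \;>\; 0.
\]
Thus $\mathrm{supp}(\mu^G) \cap \cH^G_{\lambda_0} \ne \emptyset$, and I fix a point $p_0 = (\underline{\omega}_0, x_0)$ in this intersection.

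For the chart itself I work in a small tubular neighborhood $V$ of $p_0$ in $\Omega^{\ZZ} \times M$. By the continuity of plaques, for every $(\underline{\omega}, y) \in \cH^G_{\lambda_0} \cap V$ the plaque $W^{E,u}_{\delta_u}(\underline{\omega}, y)$ is $C^1$-close to $W^{E,u}_{\delta_u}(p_0)$, and therefore transverse inside $\{\underline{\omega}\} \times M$ to a small $F$-disk $T_{\underline{\omega}}$ chosen to depend continuously on $\underline{\omega}$; each such plaque meets this transversal in a single point. Let $\cK$ be the (compact) collection of all these intersection points with its subspace topology. For each $(\underline{\omega}, x) \in \cK$ there is an associated Pesin center $(\underline{\omega}, y) \in \cH^G_{\lambda_0}$ with $x \in W^{E,u}_{\delta_u}(\underline{\omega}, y)$, and $\Phi_{(\underline{\omega},x)}$ is defined by parametrizing a radius-$\delta$ sub-disk $\gamma^u_\delta(\underline{\omega}, x) \subset W^{E,u}_{\delta_u}(\underline{\omega}, y)$ around $x$ by $\DD^{\dim E}$ via the exponential along $E(\underline{\omega},x)$, with $\delta < \delta_u$ chosen small enough that each sub-disk stays inside its ambient plaque. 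Continuity and injectivity of $\Phi$ follow from the $C^1$ continuity of the plaques and the transversality built into the construction, so $\Phi : \cK \times \DD^{\dim E} \to \cB$ is a homeomorphism onto its image $\cB$.

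Finally, to secure the measure-theoretic properties, I would regard $\delta$ (or the transversal radius) as a continuous parameter: the resulting boxes $\cB(\delta)$ form a monotone family, so at most countably many parameter values have $\mu^G(\partial \cB) > 0$, and a generic choice avoids them. Positivity $\mu^G(\cB) > 0$ is immediate since $\cB$ contains an open neighborhood of $p_0 \in \mathrm{supp}(\mu^G)$, and portmanteau applied to the $\mu^G$-continuity set $\cB$ then gives $\mu_n^G(\cB) \to \mu^G(\cB) > 0$, hence $\mu_n^G(\cB) > 0$ for all large $n$. The main obstacle, I expect, is the first step---pinning down $C^1$ continuity of the random plaques on $\cH^G_{\lambda_0}$ in the random setting---together with coordinating the transversals $T_{\underline{\omega}}$ continuously in $\underline{\omega}$ so that $\Phi$ is a genuine homeomorphism onto its image rather than just a fiberwise parametrization.
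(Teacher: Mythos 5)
Your construction follows the same route as the paper's: use compactness of the Pesin block $\cH^G_{\lambda_0}$ and the convergence $\mu_n^G\to\mu^G$ (portmanteau on a closed set) to get $\mu^G(\cH^G_{\lambda_0})>0$; pick a base point, erect a transversal $F$-disk, index the plaques of uniform size $\delta_u$ from Lemma \ref{lu} by their intersection points with the transversal to obtain a compact set $\cK$; build $\Phi$ from the continuous dependence of the plaques; dispose of $\mu^G(\partial\cB)>0$ by varying the radius; and apply portmanteau again to get $\mu_n^G(\cB)>0$ for large $n$. (Two cosmetic remarks: the paper uses a single fixed disk $\Delta^F\subset M$ and takes $\cN_0^{\ZZ}\times\Delta^F$ as the transversal in the product space, so the coordination of transversals over $\underline{\omega}$ that you worry about at the end is not actually needed; and the compactness of $\cK$ is proved there via Arzel\`a--Ascoli exactly as you suggest.)

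There is, however, one step that fails as written: your justification of $\mu^G(\cB)>0$. You claim this is immediate because ``$\cB$ contains an open neighborhood of $p_0\in\mathrm{supp}(\mu^G)$.'' It does not: $\cB$ is the union of the $\dim E$-dimensional plaques through the points of $\cH^G_{\lambda_0}\cap V$, i.e.\ the saturation of a compact set that may have empty interior, so $\cB$ is in general not a neighborhood of anything. Moreover, choosing $p_0$ merely in $\mathrm{supp}(\mu^G)\cap\cH^G_{\lambda_0}$ is not enough, since $\mu^G$ could give zero mass to $\cH^G_{\lambda_0}\cap V$ for every small $V$ while the mass near $p_0$ lives outside the Pesin block. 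The correct argument, and the one the paper uses, is to choose $p_0$ in the support of the \emph{restricted} measure $\mu^G|_{\cH^G_{\lambda_0}}$ (nonempty because $\mu^G(\cH^G_{\lambda_0})>0$), and then to observe that every point of $\cH^G_{\lambda_0}\cap C_r$ lies on its own plaque and hence in $\cB$ (after arranging, as the paper does, that the intersection of each sub-disk with $C_r$ lies in its interior), so that $\mu^G(\cB)\ge\mu^G(\cH^G_{\lambda_0}\cap C_r)>0$. With this replacement your proof goes through.
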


\begin{proof}
Suppose that $\cN_{0}:= {\rm supp \nu_{\varepsilon_0}}$, then we have ${\rm supp}(\nu_n)\subset \cN_{0}$ for any $n\in \NN$ by our setting on random perturbations.
By Lemma \ref{lu}, we take $\delta_u$ as the radius of random local unstable disks of points in $\cH^G_{\lambda_0}$.
Since $\mu_n\to \mu$, we have $\mu_n^G \to \mu^G$ by Lemma \ref{Lem:invariant-measure-G}, together with the compactness of $\cH^G_{\lambda_0}$ and assumption $\liminf_{n\to \infty}\mu_n^G(\cH^G_{\lambda_0}):=\alpha_0>0$ yield that $\mu^G(\cH^G_{\lambda_0})\ge \alpha_0$. Choose any $(\underline{\omega_0}, x_0)$ in the supported of $\mu^G|\cH^G_{\lambda_0}$, hence for any $r\ll \delta_u$, one has $\mu^G(\cN_{0}^{\ZZ}\times B(x_0,r))>0$, and thus $\mu^G(\cN_{0}^{\ZZ}\times \overline{B(x_0,r)})>0$. Rewrite $C_r=\cN_{0}^{\ZZ}\times \overline{B(x_0,r)}$, it is compact.
Up to reducing $r$, one can take a smooth compact disk $\Delta^F$ such that for every $x\in \overline{B(x_0, r_0)}$, if $N_x$ is a smooth disk tangent to $\mathcal{E}_a$ with radius $\delta_u$ around $x$, then $N_x$ transverse to $\Delta^F$ at a single point.
This implies that for every $(\underline{\omega},x)\in \cH^G_{\lambda_0}\cap C_r$, the random local unstable disk $W^{E,u}_{\delta_u}(\underline{\omega},x)$ transverse to $\Delta^F$ at a point, use $\cL$ to denote the family of these random local unstable manifolds. More precisely, we take
$$
\cL=\left\{ \gamma(\underline{\omega},x)  : \gamma(\underline{\omega},x)=W^{E,u}_{\delta_u}(\underline{\omega},x),~\textrm{for some}~(\underline{\omega}, x)\in  \cH^G_{\lambda_0}\cap C_r\right\}.
$$
Then, consider the following subset of $\cN_0^{\ZZ}\times \Delta^F$,
$$
\cK=\left\{(\underline{\omega},x)\in \cN_0^{\ZZ}\times \Delta^F: x=\gamma(\underline{\omega},y)\cap \Delta^F, ~ \textrm{for some}~\gamma(\underline{\omega}, y)\in \cL \right\}.
$$

\begin{Lemma}
$\cK$ is a compact subset of $\cN_{0}^{\ZZ}\times \Delta^F$.
\end{Lemma}
\begin{proof}
Let $(\underline{\omega_m},x_m)\in \cK$ be a sequence of points which converges to a point $(\underline{\omega},x)\in \cN_{0}^{\ZZ}\times M$ as $n\to +\infty$. One knows that $x$ is contained in $\Delta^F$, as $\Delta^F$ is compact, thus it suffices to show that there is $\gamma(\underline{\omega},y)\in \cL$ containing $x$. By construction of $\cK$, for each $(\underline{\omega_m},x_m)\in \cK$, there exists a point $y_m$ so that $x_m\in \gamma(\underline{\omega_m},y_m)\in \cL$. Considering subsequences if necessary, we assume that $(\underline{\omega_m},y_m)$ converges to a point $(\underline{\omega} ,y)$, using Ascoli-Arzela theorem,
there is a smooth disk $\gamma$ containing $y$
such that  $\gamma(\underline{\omega_m},y_m)$ converges to $\gamma$ in $C^1$-topology. This also implies that $\gamma$ is tangent to $E(\underline{\omega}, \cdot)$ everywhere.
Since  $\cH^G_{\lambda_0}\cap C_r$ is compact, we have $(\underline{\omega},y)\in \cH^G_{\lambda_0}\cap C_r$, and so $\gamma \in \cL$.
Now we deduce that $x\in \gamma$, indeed, for any $\varepsilon>0$, there exists $m$ large enough such that $d(x_m,x)<\varepsilon/2$, and some point $x_{\varepsilon}\in \gamma$ satisfying $d(x_m, x_{\varepsilon})<\varepsilon/2$, since $\gamma(\underline{\omega_m},y_m)$ converges to $\gamma$. Consequently, $d(x,x_{\varepsilon})<\varepsilon$, by compactness of $\gamma$ we obtain $x\in \gamma$.
\end{proof}

It follows from the construction of $\cK$ that there exists $\delta \le \delta_u$ and a family of random unstable disks $\cF_{\delta}=\{\gamma^u_{\delta}(\underline{\omega},x): (\underline{\omega},x )\in \cK\}$ such that for each $(\underline{\omega},x)\in \cK$,
\begin{itemize}
\item there exists $\gamma(\underline{\omega}, y)\in \cL$ such that $\gamma^u_{\delta}(\underline{\omega},x)\subset \gamma(\underline{\omega},x)$ and has radius $\delta$ with center $x$;
\item the intersection of $\gamma^u_{\delta}(\underline{\omega},x)$ with $C_r$ is contained in the interior of $\gamma^u_{\delta}(\underline{\omega},x)$.
\end{itemize}
Furthermore, as the random local unstable disks of $\mathcal{F}_{\delta}$ depend continuously on the points of $\cK$, there is a continuous map
$\phi: \cK \longrightarrow {\rm Emb}^{1}(\DD^{{\rm dim}E} , \Omega \times M)$ such that $\phi(\underline{\omega},x)(\DD^E)=\{\underline{\omega}\}\times \gamma^u_{\delta}(\underline{\omega},x)$.  As a result, one can define the map
\begin{align*}
\Phi: \cK \times \DD^{{\rm dim}E}  & ~~\longrightarrow ~~\Omega^{\mathbb{Z}}\times M \\
(\underline{\omega}~,~x~ ,~y) &~~\longmapsto ~~\phi(\underline{\omega},x)(y).
\end{align*}
Let $\cB=\Phi( \cK \times \DD^{{\rm dim}E}  )$, then $\Phi: \cK \times \DD^{{\rm dim}E} \longrightarrow \cB$ is a foliated chart.
Since $\cB$ consisted of all the endowed disks from $\mathscr{F}_{\delta}$, thus $\mu^G(\cB)\ge \mu^G(\cH^G_{\lambda_0}\cap C_r)>0$. Reducing $r$, $\delta$ if necessary, one can assume $\mu^G(\partial(\cB))=0$. Then $\mu_n^G\to \mu^G$ gives
$$
\lim_{n\to \infty}\mu_n^G(\cB)=\mu^G(\cB)>0,
$$
which ends the proof of Proposition \ref{forc}.
\end{proof}

\subsection{Existence of backward contracting points}\label{se43}
The goal of this subsection is to show the existence of enough backward contracting points in measure-theoretic sense, under the assumption that the randomly ergodic limit has positive integration Lyapunov exponents along $E$.  More precisely, we have

\begin{Theorem}\label{main2}
Let $f$ be a $C^2$ diffeomorphism with an attractor $\Lambda$ admitting a dominated splitting $T_{\Lambda} M=E\oplus_{\succ} F$. If $\mu$
is a randomly ergodic limit such that
$
\lambda_E(\mu)>0,
$
then there exists $\lambda_0, \alpha_0 \in (0,1)$ with following properties:
\begin{itemize}
\item $\mu^G(\cH^G_{\lambda_0})\ge \alpha_0$ and $\mu(\cH_{\lambda_0})\ge \alpha_0$.
\item If $\{\mu_n\}_{n\in \mathbb{N}}$ is the sequence of ergodic stationary measures such that $\lim\limits_{n\to \infty}\mu_n=\mu$,
then $\liminf\limits_{n\to \infty}\mu_n^G(\cH^G_{\lambda_0})\ge \alpha_0.$
\end{itemize}

\end{Theorem}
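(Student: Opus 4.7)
The strategy is to apply Pliss's lemma to Birkhoff averages of $\phi(\underline{\omega},x):=\log\|Df^{-1}|_{E(\underline{\omega},x)}\|$ along backward $G$-orbits for the ergodic stationary measures $\mu_n$, then push the resulting uniform lower bound through the zero-noise limit. Continuity of $E(\underline{\omega},x)$ (Lemma \ref{randomconti}) and of $\omega\mapsto f_{\omega}$ makes $\phi$ continuous, and hence bounded by some $A>0$, on the compact set $\Lambda_{\varepsilon_0}$. Since $\mu$ is a randomly ergodic limit with $\mu_n\to\mu$, Corollary \ref{toinv} gives $\mu_n^G\to\mu^G=\delta_{\omega_f}^{\mathbb{Z}}\times\mu$, whence $\int\phi\,d\mu_n^G\to\int\phi\,d\mu^G=-\lambda_E(\mu)<0$.

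Fix constants $0<c_2<c_1<\lambda_E(\mu)$ and set $\lambda_0:=e^{-c_2}\in(0,1)$, $\alpha_0:=(c_1-c_2)/(A-c_2)\in(0,1)$. For all sufficiently large $n$, $\int\phi\,d\mu_n^G<-c_1$. By Lemma \ref{Lem:invariant-measure-G}, $\mu_n^G$ is ergodic for $G$, hence also for $T:=G^{-1}$, and Birkhoff yields almost surely
$$\frac{1}{N}\sum_{i=0}^{N-1}\phi(G^{-i}(\underline{\omega},x))\longrightarrow\int\phi\,d\mu_n^G<-c_1.$$
Now apply Pliss's lemma to the bounded sequence $\bigl(-\phi(G^{-i}(\underline{\omega},x))\bigr)_{i=0}^{N-1}$ on the set where this Birkhoff sum is $\leq-c_1N$: it produces at least $\alpha_0 N$ ``Pliss times'' $n\in[1,N]$ with the property that, after reindexing $l=n-i$ and putting $y:=T^n(\underline{\omega},x)=G^{-n}(\underline{\omega},x)$, one obtains $\sum_{l=0}^{k-1}\phi(G^{-l}y)\leq -c_2 k$ for every $1\leq k\leq N-n$; i.e., $y\in H_{N-n}$ where
$$H_L:=\Big\{y:\prod_{l=0}^{k-1}\|Df^{-1}|_{E(G^{-l}y)}\|\leq\lambda_0^k,\ 1\leq k\leq L\Big\}.$$

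A density-transfer argument then converts the Pliss times $n\le N-L$ into positive $\mu_n^G$-measure: $\sum_{n=1}^{N-L}\mathbf{1}_{H_L}(T^n(\underline{\omega},x))\ge\alpha_0 N-L$ on the Birkhoff-good set, and integrating against the $T$-invariant $\mu_n^G$ gives $(N-L)\mu_n^G(H_L)\ge(\alpha_0 N-L)(1-o_N(1))$. Letting $N\to\infty$ yields $\mu_n^G(H_L)\ge\alpha_0$ for every $L$, and since $\cH^G_{\lambda_0}=\bigcap_L H_L$ with $H_L$ decreasing, $\mu_n^G(\cH^G_{\lambda_0})\ge\alpha_0$ for all large $n$, which is the second bullet. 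Closedness of $\cH^G_{\lambda_0}$ (intersection of closed sets, by continuity of $\phi$) and weak convergence $\mu_n^G\to\mu^G$ give, via the portmanteau theorem, $\mu^G(\cH^G_{\lambda_0})\ge\limsup_n\mu_n^G(\cH^G_{\lambda_0})\ge\alpha_0$; and because $\mu^G=\delta_{\omega_f}^{\mathbb{Z}}\times\mu$, evaluating the defining random condition at the constant sequence $\underline{\omega}_f$ reduces it to the deterministic one, so $\mu(\cH_{\lambda_0})=\mu^G(\cH^G_{\lambda_0})\ge\alpha_0$.

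The main technical obstacle I anticipate is making the Pliss density $\alpha_0$ uniform in $n$ and pushing the Chebyshev-type averaging step to the $N\to\infty$ limit: both rely on the convergence $\int\phi\,d\mu_n^G\to-\lambda_E(\mu)$ (so that a single choice of $c_1,c_2$ serves all large $n$) and on the uniform bound $A$ coming from compactness of $\mathscr{F}({\rm supp}\,\nu_{\varepsilon_0})$. Egorov's theorem supplies the required uniform rate of Birkhoff convergence within each $\mu_n^G$ so that the ``Birkhoff-good'' set has measure tending to $1$ fast enough for the counting to close.
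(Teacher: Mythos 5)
Your proposal is correct in substance and reaches the same conclusion by a genuinely different route at the key technical step. The paper first proves an \emph{infinite} Pliss-type statement (Proposition \ref{Lem:Pliss-infinite}, built from Lemma \ref{Plisslemma} and Lemma \ref{Lem:MCYPliss}) which, for $\mu_n^G$-a.e.\ point, produces a positive-density set $K\subset\NN$ of indices $k$ with $G^{-k}(\underline{\omega},x)\in\cH^G_{\lambda_0}$ outright, and then converts density into measure by applying Birkhoff's theorem to $\chi_{\cH^G_{\lambda_0}}$ along the backward orbit. You instead use only the finite Pliss lemma at each scale $N$, land in the finite-level sets $H_L$, convert the count into measure by integrating against the $T$-invariant measure (a Chebyshev/Kac-type argument that tolerates the $o_N(1)$ loss from the Birkhoff-good set), and finally pass to $\cH^G_{\lambda_0}=\bigcap_L H_L$ by monotone continuity of the measure. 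This bypasses the paper's infinite Pliss proposition entirely, which is the most technical ingredient of its proof, at the cost of tracking the truncation parameter $L$. The reduction of the first bullet to the second (closedness of $\cH^G_{\lambda_0}$ plus portmanteau, and the identity $\mu(\cH_{\lambda_0})=\mu^G(\cH^G_{\lambda_0})$ via $\mu^G=\delta_{\omega_f}^{\ZZ}\times\mu$) is identical in both arguments.

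One point needs care in your Pliss step: membership of $y=G^{-n}(\underline{\omega},x)$ in $H_{N-n}$ requires control of sums over windows \emph{starting} at index $n$ and extending to larger indices (i.e.\ further backward along the orbit), whereas Lemma \ref{Plisslemma} as stated produces times $n_j$ controlling windows \emph{ending} at $n_j$; your reindexing $l=n-i$ applied directly to that output yields bounds on $\sum_{l=0}^{k-1}\phi(G^{+l}y)$ for $k\le n$, not on $\sum_{l=0}^{k-1}\phi(G^{-l}y)$ for $k\le N-n$. The fix is standard --- apply Lemma \ref{Plisslemma} to the time-reversed finite sequence $b_i=-a_{N+1-i}$, whose Pliss times translate back into starting-window estimates with the range $1\le k\le N+1-n$ you want and the same density $\alpha_0$ --- but as written the orientation is backwards, so this step should be restated.
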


Recall the well known Pliss Lemma, one can see a proof in \cite[Lemma 3.1]{ABV00}.

\begin{Lemma}\label{Plisslemma}
Given constants $p_0\ge p_1> p_2>0$, there exists $\rho=\rho(p_0,p_1,p_2)>0$ such that for any integer $N\in \mathbb{N}$,  and  real numbers $a_1,\cdots,a_N$, if
$$
\frac{1}{N}\sum_{i=1}^{N}a_i\ge p_1, ~~a_i\le p_0 ~~~\text{for every} ~~1\le i \le N.
$$
Then there is an integer $\ell>\rho N$ and a sequence $1\le n_1 \le \cdots \le n_{\ell} \le N$ such that
$$
\frac{1}{n_j-n}\sum_{i=n+1}^{n_j}a_i\ge p_2 ~~~\text{for any}~~0\le n<n_j, ~~~\text{and}~~1\le j \le \ell.
$$
\end{Lemma}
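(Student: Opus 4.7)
The plan is to prove Pliss's Lemma directly, via a running-maximum argument on shifted partial sums; no previously established result in the paper is used. First I would pass to the centered sequence $b_i := a_i - p_2$ with partial sums $S_k := \sum_{i=1}^k b_i$ and $S_0 := 0$. Under this change of variables the hypotheses read $b_i \le p_0 - p_2$ for every $i$ and $S_N \ge N(p_1 - p_2) > 0$, while the desired inequality, after clearing the denominator $n_j - n$, becomes $S_{n_j} - S_n \ge 0$ for every $0 \le n < n_j$. Hence the Pliss times $n_j$ are exactly those indices in $\{1, \dots, N\}$ at which $S_k$ attains a (possibly tied) new record relative to the running maximum $M_k := \max_{0 \le j \le k} S_j$.

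The core of the proof is a telescoping estimate on $M$. At a non-Pliss index $k$ one has $S_k < M_{k-1}$, so $M_k = M_{k-1}$ and the envelope does not grow. At a Pliss index $k$ one has $S_k \ge M_{k-1}$, hence
$$
M_k - M_{k-1} \;=\; S_k - M_{k-1} \;\le\; S_k - S_{k-1} \;=\; b_k \;\le\; p_0 - p_2,
$$
using $M_{k-1} \ge S_{k-1}$. Summing this over $k = 1, \dots, N$ I obtain $M_N \le \ell(p_0 - p_2)$, where $\ell$ counts the Pliss indices. Combining with $M_N \ge S_N \ge N(p_1 - p_2)$ yields
$$
\ell \;\ge\; \frac{p_1 - p_2}{p_0 - p_2}\, N.
$$

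To convert the non-strict inequality into the strict bound $\ell > \rho N$ required by the statement, I would set $\rho := (p_1 - p_2)/(2(p_0 - p_2))$, or more generally any constant strictly smaller than the ratio $(p_1 - p_2)/(p_0 - p_2)$; the resulting $\rho$ depends only on $p_0, p_1, p_2$, as required. There is no genuine obstacle in this argument: it is purely a piece of bookkeeping, contrasting the per-step cap of $p_0 - p_2$ on the growth of the envelope of the walk $(S_k)$ against its total growth, which is forced to be at least $N(p_1 - p_2)$ by the average hypothesis.
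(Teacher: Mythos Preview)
Your argument is correct. The paper does not give its own proof of this lemma; it simply cites \cite[Lemma 3.1]{ABV00}. Your running-maximum proof is the standard one (and is essentially the argument in that reference): after centering by $p_2$, the Pliss times are precisely the record times of the partial-sum walk, and the total rise $M_N - M_0 \ge S_N \ge N(p_1 - p_2)$ is bounded above by $\ell(p_0 - p_2)$ because each record step increments the envelope by at most $b_k \le p_0 - p_2$. Taking any $\rho < (p_1 - p_2)/(p_0 - p_2)$ then gives the strict bound $\ell > \rho N$.
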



We need the following lemma, see a proof in \cite[Lemma 2.1]{MCYMZ}.

\begin{Lemma}\label{Lem:MCYPliss}
Let $\{a_n\}_{n=1}^{\infty}$ be a sequence of real numbers, for $N\in \NN$, if
$$
\sum_{i=1}^{n}a_i \le 0,~~~~ \forall~n\ge N.
$$
Then there exists $1\leq k \le N$ such that
$$
\sum_{i=1}^n a_{i+k-1} \le 0,~~~~\forall~n\in \NN.
$$
\end{Lemma}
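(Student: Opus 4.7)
The plan is to reformulate the statement in terms of the partial-sum sequence and then to choose the shift $k$ so that $S_{k-1}$ is a global maximum of these partial sums.

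First, I would set $S_0 = 0$ and $S_n = \sum_{i=1}^n a_i$ for $n \ge 1$. A direct telescoping gives
$$
\sum_{i=1}^n a_{i+k-1} = S_{n+k-1} - S_{k-1}
\qquad \text{for every } n \ge 1 \text{ and } k \ge 1.
$$
Hence, setting $j = k-1$, the conclusion of the lemma is equivalent to producing an index $j \in \{0,1,\dots,N-1\}$ such that $S_m \le S_j$ for all $m > j$, and in fact it is enough to have $S_j \ge S_m$ for all $m \ge 0$.

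Second, I would observe that the hypothesis $S_n \le 0$ for all $n \ge N$, combined with $S_0 = 0$, forces
$$
\sup_{m \ge N} S_m \;\le\; 0 \;=\; S_0 \;\le\; \max_{0 \le m \le N-1} S_m,
$$
so that $\sup_{m \ge 0} S_m = \max_{0 \le m \le N-1} S_m$. In particular the supremum is attained; let
$$
j := \min\bigl\{\, m \ge 0 : S_m = \sup_{\ell \ge 0} S_\ell \,\bigr\},
$$
which by the previous line satisfies $j \in \{0,1,\dots,N-1\}$.

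Finally, I would take $k = j + 1$, so that $1 \le k \le N$. For every $n \ge 1$ the identity above and the maximality of $S_j$ give
$$
\sum_{i=1}^n a_{i+k-1} \;=\; S_{n+j} - S_j \;\le\; 0,
$$
which is exactly the desired conclusion. No step here is delicate; the only point worth stating carefully is that the supremum of $\{S_m\}_{m \ge 0}$ is actually attained (and at an index at most $N-1$), which is immediate from the hypothesis together with $S_0 = 0$.
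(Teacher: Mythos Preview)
Your argument is correct. The reformulation via the partial sums $S_n$ and the choice of $k-1$ as an index at which $\{S_m\}_{m\ge 0}$ attains its global maximum is the natural approach, and every step is justified.

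Note, however, that the paper does not actually supply its own proof of this lemma: it merely cites \cite[Lemma 2.1]{MCYMZ}. So there is no in-paper argument to compare against. Your self-contained proof is a clean addition; if anything, it improves on the paper by removing the external dependence for such an elementary fact.
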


\begin{Definition}
Given subset $\LL\subset \NN$ we define the \emph{density} of $L$ as
$$
\limsup_{n\to +\infty}\frac{\#([1,n]\cap \LL)}{n}.
$$

\end{Definition}

By Pliss Lemma (Lemma \ref{Plisslemma}) and Lemma \ref{Lem:MCYPliss}, one can get the following technical result.

\begin{Proposition}\label{Lem:Pliss-infinite}

Given constants $p_0\ge p_1> p_2>0$, there exists $\rho=\rho(p_0,p_1,p_2)>0$  such that for real numbers $a_1,a_2,\cdots,a_n,\cdots$, if
$$\limsup_{n\to\infty}\frac{1}{n}\sum_{i=1}^n a_i\le p_2, ~~|a_i|\le p_0,$$
Then there is a subset $K\subset\NN$ whose density is larger than $\rho$ such that for any $k\in K $, we have
$$\sum_{i=0}^{n-1}a_{i+k}\le np_1,~~~\forall n\in\NN.$$

\end{Proposition}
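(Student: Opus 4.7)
The plan is to translate the hypothesis $\limsup_N S_N/N \le p_2$ into a negative-drift statement and then count the desired starting indices directly by a telescoping argument on the running supremum of the shifted partial sums. This bypasses the potentially delicate task of upgrading finite-scale Pliss times to forever Pliss times.

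\textbf{Step 1 (Translation).} Let $S_N = \sum_{i=1}^N a_i$ and $T_N = S_N - Np_1$. Then $|T_N - T_{N-1}| = |a_N - p_1| \le p_0 + p_1 =: B$, and $\limsup S_N/N \le p_2$ yields $\limsup T_N/N \le -c$ with $c := p_1 - p_2 > 0$, so there is $N_0$ with $T_N \le -Nc/2$ for every $N \ge N_0$. The target inequality $\sum_{i=0}^{n-1} a_{i+k} \le np_1$ for all $n \in \NN$ is equivalent to $T_{k+n-1} \le T_{k-1}$ for all $n \ge 1$, i.e.\ to $k-1$ being a \emph{future-maximum} of $T$ (meaning $T_{k-1} \ge T_m$ for all $m \ge k-1$).

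\textbf{Step 2 (Running supremum).} Since $T_N \to -\infty$, the running supremum $R_N := \sup_{m \ge N} T_m$ is finite for every $N$, non-increasing in $N$, and satisfies $R_N \le -Nc/2$ for $N \ge N_0$. From the identity $R_k = \max(T_k, R_{k+1})$, an index $k$ is a strict future-maximum of $T$ iff $R_k > R_{k+1}$; in that case $R_k = T_k$ and $R_{k+1} \ge T_{k+1}$, so the jump satisfies
$$0 < R_k - R_{k+1} \le T_k - T_{k+1} \le B.$$

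\textbf{Step 3 (Telescoping and counting).} Summing the non-negative jumps yields
$$R_1 - R_{N+1} \;=\; \sum_{k=1}^{N}(R_k - R_{k+1}) \;\le\; B\cdot\#\bigl\{k \in [1,N] : k\ \text{is a strict future-maximum of}\ T\bigr\}.$$
Combining the elementary bounds $R_1 \ge T_1 \ge -B$ and $R_{N+1} \le -(N+1)c/2$ (valid for $N+1 \ge N_0$) gives
$$\#\bigl\{k \in [1,N] : k\ \text{is a strict future-maximum of}\ T\bigr\} \;\ge\; \frac{(N+1)c/2 - B}{B}.$$
Consequently, shifting indices by $1$, the set $K = \{k \in \NN : k-1\ \text{is a future-maximum of}\ T\}$ satisfies $\#(K \cap [1,N]) > \rho N$ for all sufficiently large $N$, where one may take $\rho = (p_1-p_2)/(4(p_0+p_1))$, a function only of $(p_0,p_1,p_2)$.

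\textbf{Main obstacle.} The chief conceptual point is identifying the running supremum $R_N$ as the correct quantity whose telescoping directly counts future-maxima; once this identification is made, only elementary jump bounds combined with the drift estimate $T_N \le -Nc/2$ are required. A route based on Lemma \ref{Plisslemma} and Lemma \ref{Lem:MCYPliss} is plausible but would face a much more delicate density-bookkeeping problem, since the convergence rate in $\limsup T_N/N \le -c$ is not uniform and the ``loss'' incurred when extending scale-$N$ Pliss times to forever Pliss times is a priori unbounded.
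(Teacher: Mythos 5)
Your argument is correct, and it takes a genuinely different route from the paper's. The paper proves the proposition by bootstrapping the finite-horizon Pliss lemma: it uses Lemma \ref{Lem:MCYPliss} to manufacture a sequence of scales $\ell_1<\ell_2<\cdots$ at which all forward averages are controlled by $p=(p_1+p_2)/2$, applies Lemma \ref{Plisslemma} on each block $[\ell_{m-1},\ell_m]$ to harvest a proportion $\rho$ of finite-horizon Pliss times, and then splices the block estimate with the control available from the scale $\ell_m$ onward to upgrade each such time to an infinite-horizon one. You instead observe that the admissible starting indices are exactly the (shifted) future-maxima of the tilted sums $T_N=S_N-Np_1$, and you count them by telescoping the running supremum $R_N=\sup_{m\ge N}T_m$: its jumps occur precisely at strict future-maxima, are bounded by $B=p_0+p_1$, and must sum to the linear decay of $R_N$ forced by the drift hypothesis. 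This is the classical ``record-time'' proof of Pliss-type statements; it is shorter, self-contained (neither auxiliary lemma is needed), and it yields the slightly stronger conclusion $\#(K\cap[1,N])/N>\rho$ for all large $N$, i.e.\ a lower-density bound rather than only the upper density used in the paper's definition. Your closing remark that the multi-scale route suffers an ``a priori unbounded loss'' is overstated --- the paper does carry that bookkeeping through successfully --- but the gain in economy is real. Two cosmetic points you may as well make explicit: every strict future-maximum is a future-maximum, so counting the strict ones indeed lower-bounds $\#K$; and to obtain density \emph{strictly} larger than the advertised constant one simply replaces your $\rho=(p_1-p_2)/(4(p_0+p_1))$ by $\rho/2$.
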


\begin{proof}
In what follows, we shall show that there exists a sequence of integers $\{\ell_m\}_{m\in \mathbb{N}}$ such that $\ell_0=1$, $\ell_m\to \infty$, and $I_m\subset [\ell_{m-1}, \ell_m]$ such that every $k\in I_m, m\in \mathbb{N}$,
\begin{equation}\label{P0}
\sum_{i=0}^{n-1}a_{i+k}\le np_1, ~~\forall n\in \mathbb{N}.
\end{equation}
Moreover, they admit the following property:
\begin{equation}
\frac{\#(I_1\cup \cdots I_m)}{\ell_m}\ge \rho
\end{equation}
for some $\rho$ depending only on $p_0,p_1,p_2$. Therefore, by taking $K =\bigcup_{m\in \mathbb{N}}I_m$, we then have
$$
\limsup_{n\rightarrow \infty}\frac{1}{ n}\#\left([1,n]\cap K]\right){n}\ge \rho.
$$
to end the proof.

Let us fix $p=(p_1+p_2)/2$.  By assumption, for any $j\in \mathbb{N}$ we have
$$
\limsup_{n\to \infty}\frac{1}{n}\sum_{i=0}^{n-1}a_{i+j}\le p_2.
$$
Thus, there exists $N(j)\in\mathbb{N}$ such that
\begin{equation}\label{P1}
\frac{1}{n}\sum_{i=0}^{n-1}a_{i+j}\le p, ~~\forall n\ge N(j).
\end{equation}
By applying Lemma \ref{Lem:MCYPliss}, we know that there exists $j\le \ell(j) \le N(j)+j$ so that
\begin{equation}\label{P2}
\frac{1}{n}\sum_{i=0}^{n-1}a_{i+\ell(j)}\le p,~~\forall n\ge 1.
\end{equation}

Take $j_1$ large enough such that $\ell_1:=\ell(j_1)\ge j_1 \ge N(1)$, then by (\ref{P1}) we get
\begin{equation}\label{P3}
\frac{1}{\ell_1}\sum_{i=1}^{\ell_1}a_i\le p.
\end{equation}
By applying Lemma \ref{Plisslemma}, there exists $\rho=\rho(p_0,p_1,p_2)\in (0,1)$ so that, there is a sequence of integers $I_1=\{k_1, \cdots, k_{j(1)}\}\subset \{1,\cdots \ell_1\}$ satisfying $j(1)\ge \rho \ell_1$, and for each $k\in I_1$, one has
\begin{equation}\label{P4}
\sum_{i=0}^{m-1}a_{i+k}\le mp_1,~~\forall 1\le m \le  \ell_1-k+1.
\end{equation}
It follows from (\ref{P2}) and (\ref{P4}) that for $m>\ell_1-k+1$ we have
\begin{eqnarray*}
\sum_{i=0}^{m-1}a_{i+k}&\le & \sum_{i=0}^{\ell_1-k}a_{i+k} +\sum_{i=0}^{m+k-\ell_1-1}a_{\ell_1+i}\\
&\le &(\ell_1-k)p_1+(m+k-\ell_1-1)p\\
&\le &mp_1.
\end{eqnarray*}
This together with (\ref{P4}) ensure that each $k\in I_1$ has property (\ref{P0}).

Now we take $j_2=\ell_1$, and then choose $\ell_2=\ell(j_2)$ with properties (\ref{P2}) and (\ref{P3}) by letting $j=j_2$ and replacing $\ell_1$ with $\ell_2$, respectively. Again, by applying Lemma \ref{Plisslemma} for $\ell_2$, one can find a sequence of integers $I_2\subset [\ell_1, \ell_2]$ such that
\begin{itemize}
\item each $k\in I_2$ exhibits the property (\ref{P4}) for $\ell_2$, i.e.,
\begin{equation}\label{j2}
\sum_{i=0}^{m-1}a_{k+i}\le mp_1, ~~\forall 1\le m \le \ell_2-k+1.
\end{equation}
\item $\#\{I_1\cup I_2\}\ge \rho \ell_2$.
\end{itemize}
Here we use the fact that each $k\le \ell_1$ satisfies (\ref{P4}) for $\ell_2$ whenever it satisfies (\ref{P4}) for $\ell_1$.
Combining (\ref{j2}) with (\ref{P2}) for $j_2$, one can conclude that each $k$ in $I_2$ satisfies the property (\ref{P0}), as in $\ell_1$ case.

By taking $j_k=\ell_{k-1}$ and $\ell_k=\ell(j_k)$, $k\ge 2$ we can select a sequence of integers $\ell_m$, $m\in \mathbb{N}$ and $I_m \subset [\ell_{m-1}, \ell_m]$ such that each $k\in \cup_{m\in \mathbb{N}}I_m$ satisfies property (\ref{P0}), with density estimates (\ref{P1}) for $\rho(p_0, p_1, p_2)$.
\end{proof}

Now we can give the proof of Theorem \ref{main2} by using Proposition \ref{Lem:Pliss-infinite}.

\begin{proof}[Proof of Theorem \ref{main2}]
One observes that the first item can be deduced from the second item by taking into the account the compactness of  $\cH^G_{\lambda_0}$.  In fact, if we take ergodic stationary measures $\mu_n$ converges to $\mu$, then $\mu_n^G$ converges to $\mu^G=\delta_{\omega_f}^{\ZZ}\times \mu$ by applying Corollary \ref{toinv}, and thus
$$
\mu^G(\cH^G_{\lambda_0})\ge \limsup_{n\to \infty}\mu^G_n(\cH^G_{\lambda_0})\ge \liminf_{n\to \infty}\mu^G_n(\cH^G_{\lambda_0}) \ge \alpha_0.
$$
Moreover, we have the following relation
\begin{eqnarray*}
\mu^G(\cH^G_{\lambda_0})&=& \int \int \chi_{\cH^G_{\lambda_0}}(\underline{\omega},x){\rm d} \delta_{\omega_f}^{\ZZ}(\underline{\omega}){\rm d}\mu(x)\\
&=&  \int \chi_{\cH^G_{\lambda_0}}(\underline{\omega^f},x){\rm d}\mu(x)\\
&=&  \int \chi_{\cH_{\lambda_0}}(x){\rm d}\mu(x)\\
&=&\mu (\cH_{\lambda_0}).
\end{eqnarray*}
Therefore, $\mu (\cH_{\lambda_0})\ge \alpha_0$. Now it suffices to give the proof of the second item.

Put
$$
\alpha=\int \log \|Df^{-1}|_{E(x)}\|^{-1}{\rm d}\mu.
$$
By convergence $\mu_n^G\to \delta_{\omega_f}^{\ZZ}\times \mu$ and the continuity of $(\underline{\omega},x)\mapsto \|Df_{\underline{\omega}}^{-1}|_{E(\underline{\omega},x)}\|$, one obtains
\begin{eqnarray*}
\lim_{n\to \infty}\int \log \|Df_{\underline{\omega}}^{-1}|_{E(\underline{\omega},x)}\|{\rm d}\mu_n^G &=& \int  \log \|Df_{\underline{\omega}}^{-1}|_{E(\underline{\omega},x)}\|{\rm d}(\delta_{\omega_f}^{\ZZ}\times \mu) \\
& = & \int \log \|Df^{-1}|_{E(x)}\|{\rm d}\mu\\
&=& -\alpha.
\end{eqnarray*}
Therefore, there exists $n_0\in \NN$ such that for any $m\ge n_0$,
$$
\int \log \|Df_{\underline{\omega}}^{-1}|_{E(\underline{\omega},x)}\|{\rm d}\mu_m^G\le-\alpha/2.
$$
For every $m\ge n_0$,
as $\mu_m^G$ is ergodic, by Birkhoff ergodic theorem, for $\mu_m$-almost every $(\underline\omega,x)$, we have
$$
\lim_{n\to\infty}\frac{1}{n}\sum_{i=1}^{n}\log\|Df^{-1}_{\sigma^{-i}\underline\omega}|_{E(G^{-i}(\underline{\omega},x))}\|=\int \log \|Df_{\underline{\omega}}^{-1}|_{E(\underline{\omega},x)}\|{\rm d}\mu_m^G\le -\alpha/2.
$$
For any fixed $(\underline{\omega},x)$ satisfying above equality,
setting $$a_{i}=\log\|Df^{-1}_{\sigma^{-i}\underline\omega}|_{E(G^{-i}(\underline{\omega},x))}\|, ~~\forall i\in \mathbb{N}$$ and  $$p_0=\max_{(\underline{\omega},x)\in \Lambda_{\varepsilon_0}}\left|\log \|Df^{-1}_{\underline{\omega}}|_{E(\underline{\omega},x)}\|\right|,~~ p_1=-\alpha/4, ~~p_2=-\alpha/2.$$
By Lemma~\ref{Lem:Pliss-infinite},
there is $\rho=\rho(p_0,p_1,p_2)$ independent of $m$ and a subset $K\subset\NN$ such that
\begin{equation}\label{frompliss}
\limsup_{n\to \infty}\frac{1}{n}\#\left([1,n]\cap K\right)\ge \rho.
\end{equation}
and for each  $k\in K$, one has
\begin{equation}\label{contro}
\sum_{i=0}^{n-1}\log \|Df^{-1}_{\sigma^{-k-i}(\underline{\omega})}|_{E(G^{-k-i}(\underline{\omega},x))}\|\le -\frac{\alpha}{4}n,~~\forall n\in \mathbb{N}.
\end{equation}

%
Take $\lambda_0={\rm e}^{-\frac{\alpha_0}{4}}$, property (\ref{contro}) is equivalent to
\begin{equation}\label{tcontr}
\prod_{i=0}^{n-1}\|Df^{-1}_{\sigma^{-k-i}\underline{\omega}}|G^{-k-i}(\underline{\omega},x)\|\le \lambda_0^n,~~\forall n\in \NN, ~\forall k\in K.
\end{equation}
It follows from the definition of $\cH_{\lambda_0}^G$ that $G^{-k}(\underline{\omega},x)$ belongs to $\cH_{\lambda_0}^G$ as long as $k \in K$. Therefore, use the ergodicity of  $\mu_m^G$ again, one gets
\begin{eqnarray*}
\mu_m^G(\cH^G_{\lambda_0})&=& \lim_{n\rightarrow \infty}\frac{1}{n}\sum_{i=1}^n\chi_{\cH_{\lambda_0}^G}(G^{-i}(\underline{\omega},x))\\
&=&\lim_{n\to\infty}\frac{1}{n} \#\left\{i:~1\le i\le n,~G^{-i}(\underline\omega,x)\in \cH^G_{\lambda_0}\right\}\\
&=&\limsup_{n\to \infty}\frac{1}{n}\#{\left([1,n]\cap K \right)}
\end{eqnarray*}
By applying the density estimate (\ref{frompliss}) and taking $\alpha_0=\rho$, we get $\mu^G_m(\cH^G_{\lambda_0})\ge \alpha_0$ to complete the proof of the second item.
\end{proof}

\subsection{Proof of Theorem \ref{bac}}\label{se44}
In light of above preparations, we can complete the proof of Theorem \ref{bac}.

\begin{proof}[Proof of Theorem \ref{bac}]
Under the assumption of Theorem \ref{bac}, assume that $\{\mu_n\}$ is a sequence of ergodic stationary measures converges to $\mu$ in weak$^{\ast}$ topology. By Theorem \ref{main2}, there exist $\lambda_0, \alpha_0 \in (0,1)$ such that $\liminf_{n\to \infty}\mu_n^G(\cH^G_{\lambda_0})\ge \alpha_0$, recall that $\{\mu_n^G\}$ are $G$-invariant measures defined by Lemma \ref{Lem:invariant-measure-G}. By applying Proposition \ref{forc}, one can built a foliated chart $\Phi: \cK\times \DD^{{\rm dim}E}\longrightarrow \cB$ of random local unstable disks generated by points from $\cH^G_{\lambda_0}$ such that $\mu^G(\cB)>0$ and there exists $N\in \NN$ satisfying $\mu_n^G(\cB)>0$ for any $n\ge N$. More precisely, the construction of $\cB$ suggests that there exist $\delta>0, \sigma\in (0,1)$ and a family of random local unstable disks $\mathcal{F}_{\delta}=\{\gamma^u_{\delta}(\underline{\omega},x): (\underline{\omega},x)\in \cK\}$ with following properties:
\begin{itemize}
\item $\gamma_{\delta}^u(\underline{\omega},x)$ is a random local unstable disk of radius $\delta$ around $x$;


\item each element of $\mathcal{F}_{\delta}$ is $\sigma$-backward contracting;

\item $\cB$ is the union of the endowed disks from $\mathcal{F}_{\delta}$, i.e.,
$$
\cB=\bigcup_{\gamma_{\delta}^u(\underline{\omega},x)\in \mathcal{F}_{\delta}}\{\underline{\omega}\}\times \gamma_{\delta}^u(\underline{\omega},x);
$$
\item $\mu^G(\partial(\cB))=0$.
\end{itemize}
Define $\cF^G=\{\{\underline{\omega}\}\times \gamma_{\delta}^u(\underline{\omega},x):\gamma_{\delta}^u(\underline{\omega},x)\in\mathcal{F}_{\delta} \}$.
The assumption $\lambda_E(\mu)>0$ implies that the Lyapunov exponents of $\mu_n^G$ along $E$ are positive for all $n\ge N$, up to increasing $N$. By Lemma \ref{rGibbs}, $\{\mu_n^G\}$ are random Gibbs $cu$-state associated to $E$. Therefore, according to Lemma \ref{fde}, the densities of the conditional measures of $\mu_n^G$ along $\cF^G$ are bounded from above by a constant $C$ independent of $n$. This implies that when pulling back to $\cK \times \DD^{{\rm dim}E}$ by foliated chart $\Phi$, and denote by $\nu_n=\mu_n^G \circ \Phi $ for every $n\ge N$, there is a constant $C_{\Phi}$ such that for every measurable subset $\xi\subset \cK$ and $\zeta \subset \DD^{{\rm dim}E}$ we have
\begin{equation}\label{fdcd}
\nu_n(\xi \times \zeta)\le C_{\Phi} \cdot {\rm Leb}_{\DD^{ {\rm dim}E}}(\zeta) \hat{\nu}_n(\xi)\quad \textrm{for every}~n\ge N.
\end{equation}
Define $\nu=\mu^G \circ \Phi $, then the convergence $\mu_n^G\to \mu^G$ together with $\mu^G(\partial(\cB))=0$ gives $\nu_n \to \nu$ and $\hat{\nu}_n\to \hat{\nu}$ as $n\to \infty$. Take any open subset $\zeta \subset \DD^{{\rm dim}E}$ and open subset $\xi \subset \cK$ with $\hat{\nu}(\partial{\xi})=0$.  Applying (\ref{fdcd}), one gets
\begin{eqnarray*}
\nu(\xi \times \zeta) &\le& \liminf_{n\to \infty}\nu_n(\xi \times \zeta)\\
&\le & C_{\Phi} {\rm Leb}_{\DD^{{\rm dim}E}}(\zeta)\liminf_{n\to \infty}  \hat{\nu}_n(\xi)\\
&=& C_{\Phi} {\rm Leb}_{\DD^{{\rm dim}E}}(\zeta)\hat{\nu}(\xi)
\end{eqnarray*}
Hence, when applying Lemma \ref{vp} we get that $\mu^G$ has absolutely continuous conditional measures along $\cF^G$. Due to $\mu^G(\cB)>0$ and $\mu^G=\delta_{\omega_f}^{\ZZ}\times \mu$, when denoting $\cF=\{\gamma_{\delta}^u(\underline{\omega},x)\in \mathcal{F}_{\delta}: \underline{\omega}=\underline{\omega^f}\}$, then $\mu$ has positive measure on the union of disks of $\cF$, and moreover
 $\mu$ has absolutely continuous conditional measures along $\cF$. Now we complete the proof of Theorem \ref{bac}.
\end{proof}

\section{Gibbs $cu$-states as ergodic components}\label{se5}

By ergodic decomposition ergodic theorem \cite{Man87}, we assume that $\Sigma$ is the full $\mu$ subset of $M$ such that $\{\mu_x\}_{x\in \Sigma}$ is the family of ergodic decomposition of $\mu$, i.e., $\mu_x$ is ergodic for every $x\in \Sigma$, and moreover, we have
$$
\displaystyle{\int \left(\int\varphi d\mu_x\right)d\mu=\int \varphi d\mu} ~~\text{and}~~\displaystyle{\int \varphi d\mu_x=\lim_{n\rightarrow +\infty}\frac{1}{n}\sum_{i=0}^{n-1}\varphi(f^i(x))}
$$
for every bounded measurable function $\varphi$.

Let $\mathscr{L}$ be the set of \emph{Lyapunov regular} points, where we say $x$ is Lyapunov regular if its Lyapunov exponent of every non-zero vector is well-defined (both forward and backward).
A point $x\in M$ is  \emph{Birkhoff regular} if for any continuous function $\varphi:M\rightarrow \mathbb{R}$, both the forward and backward time averages
$$
\lim_{n\rightarrow +\infty}\frac{1}{n}\sum_{i=0}^{n-1}\varphi(f^i(x))~~~\textrm{and}~~~\lim_{n\rightarrow +\infty}\frac{1}{n}\sum_{i=0}^{n-1}\varphi(f^{-i}(x))
$$
exist and they coincide. Denote by $\mathscr{R}$ the set of Birkhoff regular points of $f$.

It is well known that both $\mathscr{R}$ and $\mathscr{L}$ are $f$-invariant set having full measure for any $f$-invariant probability measure.

By using the backward contraction property of local unstable disks, we have the next result directly.
\begin{Lemma}\label{unstable}
If $\gamma$ is a local unstable disk, then $\mu_x=\mu_y$
for any $x, y\in \gamma \cap \Sigma \cap \mathscr{R}$.
\end{Lemma}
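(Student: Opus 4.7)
The plan is to compare the ergodic measures $\mu_x$ and $\mu_y$ by testing them against continuous functions via Birkhoff averages, and exploit the defining backward contraction of the local unstable disk $\gamma$ to match these averages along negative iterates.

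First, since $x,y\in \Sigma\cap \mathscr{R}$, the ergodic decomposition formula together with Birkhoff regularity give, for every continuous $\varphi:M\to \RR$,
$$
\int \varphi\,{\rm d}\mu_x=\lim_{n\to +\infty}\frac{1}{n}\sum_{i=0}^{n-1}\varphi(f^{i}(x))=\lim_{n\to +\infty}\frac{1}{n}\sum_{i=0}^{n-1}\varphi(f^{-i}(x)),
$$
and similarly for $y$. Hence it suffices to prove that the \emph{backward} averages of $\varphi$ at $x$ and $y$ coincide.

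Next, since $\gamma$ is a local unstable disk, it is $(C,\lambda)$-backward contracting for some $C>0$ and $\lambda\in(0,1)$, so
$$
d(f^{-i}(x),f^{-i}(y))\le C\lambda^{i}\,d(x,y)\longrightarrow 0 \quad (i\to +\infty).
$$
By compactness of $M$, $\varphi$ is uniformly continuous, so $|\varphi(f^{-i}(x))-\varphi(f^{-i}(y))|\to 0$ as $i\to +\infty$. A Cesaro mean of a sequence tending to zero tends to zero, hence
$$
\lim_{n\to+\infty}\frac{1}{n}\sum_{i=0}^{n-1}\bigl(\varphi(f^{-i}(x))-\varphi(f^{-i}(y))\bigr)=0,
$$
which combined with the previous display yields $\int\varphi\,{\rm d}\mu_x=\int\varphi\,{\rm d}\mu_y$. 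Since this holds for every continuous $\varphi$, we conclude $\mu_x=\mu_y$.

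There is no real obstacle here: the statement is essentially a one-line consequence of the fact that points on the same unstable disk have asymptotically identical backward orbits, plus the equality of forward and backward Birkhoff averages on $\mathscr{R}$. The only thing to be careful about is invoking the ergodic-decomposition identity only at points of $\Sigma$ and using uniform continuity to pass the contraction through $\varphi$.
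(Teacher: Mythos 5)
Your proof is correct and is exactly the argument the paper has in mind: the paper offers no details, simply asserting the lemma follows "directly" from the backward contraction property, and your write-up (backward contraction plus uniform continuity gives equal backward Cesaro averages, Birkhoff regularity transfers this to the forward averages defining $\mu_x$ and $\mu_y$) is the standard way to make that one-liner precise.
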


The main goal of this section is to prove the following result.
\begin{Proposition}\label{pe}
Given $0<\sigma <1$ and an $f$-invariant probability measure $\mu$.  Assume that $\cF$ is a measurable partition formed by local unstable disks satisfying:
\begin{itemize}
\item each local unstable disk of $\cF$ is $\sigma$-backward contracting;
\item the union $L$ of the elements of $\cF$ has positive $\mu$ measure;
\item $\mu$ has absolutely continuous conditional measures along $\cF$.
\end{itemize}
Then for $\mu$-almost every $x\in L$, $\mu_x$ is a Gibbs cu-state.
\end{Proposition}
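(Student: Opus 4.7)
The plan is to verify, for $\mu$-a.e.\ $x\in L$, the two defining properties of a Gibbs $cu$-state in Definition \ref{DCU}: positivity of Lyapunov exponents along $E$, and absolute continuity of the conditional measures along some measurable partition subordinate to $W^{E,u}$.

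The first property is essentially immediate from the hypotheses. For any $x\in L$ the element $\gamma(x)\in \cF$ is $\sigma$-backward contracting, so in particular $\|Df^{-n}|_{E(x)}\|\le \sigma^n$ for every $n\in\NN$. Intersecting $L$ with the conull set $\Sigma\cap\mathscr{L}$ of Lyapunov-regular points in the ergodic decomposition, one reads off that every Lyapunov exponent of $x$ along $E$ is at least $-\log\sigma>0$; since $\mu_x$ is ergodic its exponents are $\mu_x$-a.e.\ constant, so $\mu_x$ has positive exponents along $E$ throughout.

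For the second property, the key input is Lemma \ref{unstable}: for $\hat\mu$-a.e.\ $\gamma\in\cF$ all points of $\gamma\cap\Sigma\cap\mathscr{R}$ share a single ergodic component $\eta(\gamma)$. Hence, modulo $\mu$-null sets, $\cF|_L$ refines the ergodic partition restricted to $L$, and the essential uniqueness of Rokhlin conditional measures identifies the conditionals of $\mu_x$ along $\cF|_L$ (on its support) with those $\mu_\gamma$ of $\mu$ itself, which are absolutely continuous by hypothesis. Applying ergodicity to the $f$-invariant saturation of $L$ gives $\mu_x(L)>0$ for $\mu$-a.e.\ $x\in L$, so this a.c.\ disintegration occupies a set of positive $\mu_x$-measure. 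To produce a single partition of $M$ subordinate to $W^{E,u}$, I would transport $\cF$ forward: for $y\in f^n(L)\setminus\bigcup_{k<n}f^k(L)$ put $\xi(y)=f^n(\gamma(f^{-n}y))$, and lump the $\mu_x$-null remainder $M\setminus\bigcup_{n\ge 0}f^n(L)$ into a single element. Each nontrivial $\xi(y)$ is contained in $W^{E,u}(y)$ (the backward-contraction estimate survives iteration) and is open in $W^{E,u}(y)$ by dimension, while absolute continuity of the conditionals is preserved because $f$ is a $C^2$ diffeomorphism. The remark following Definition \ref{DCU} then allows me to conclude that $\mu_x$ is a Gibbs $cu$-state from a.c.\ along this single partition.

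The main technical obstacle is the middle step, specifically the clean transfer of Rokhlin conditionals from $\mu$ to its ergodic components $\mu_x$. Once Lemma \ref{unstable} has been invoked this becomes a purely measure-theoretic bookkeeping exercise about joining $\cF$ with the ergodic partition and handling null sets, with the geometric content—existence of uniformly backward-contracting unstable disks carrying an a.c.\ disintegration on a positive-measure set—already built into the hypotheses.
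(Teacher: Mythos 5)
Your proposal is correct and follows essentially the same route as the paper: $\mu_x(L)>0$ via recurrence/ergodicity, positive exponents along $E$ from the uniform $\sigma$-backward contraction plus Lyapunov regularity, and the transfer of absolutely continuous conditionals from $\mu$ to $\mu_x$ via Lemma \ref{unstable} and the fact that $\cF$ refines the ergodic partition on $L$. The "bookkeeping" step you defer is exactly what the paper formalizes with Lemma \ref{lem:6.2} (from \cite{ABV00}), applied to the decomposition $\mu|_L=\int_L r(x)\,\mu_x|_L\,d\mu(x)$ with the first-return time $r(x)$ as the integrable density; your appeal to essential uniqueness of Rokhlin disintegrations needs precisely this weighted decomposition to go through, but the idea is the right one.
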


For proving Proposition \ref{pe}, we will use the following result which is essentially contained in \cite[Lemma 6.3]{Man87}. Here we give a proof for completeness.

\begin{Lemma}\label{fromMane} Assume that $\mu$ is an $f$-invariant measure. For a Borel set $B$, if $\mu(B)>0$, then
$
\mu_x(B)>0
$
for $\mu$-almost every $x\in B$.
\end{Lemma}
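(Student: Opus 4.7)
The plan is to define the ``bad'' set $A := \{x \in \Sigma : \mu_x(B) = 0\}$ and show that $\mu(A \cap B) = 0$, which is exactly the claimed statement. The ergodic decomposition integral formula stated at the beginning of $\S$\ref{se5}, applied to $\varphi = \chi_{A \cap B}$, yields
$$\mu(A \cap B) = \int \mu_x(A \cap B) \, d\mu(x),$$
so it suffices to prove that $\mu_x(A \cap B) = 0$ for $\mu$-almost every $x$.

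First I would observe that the function $g(x) := \mu_x(B)$ is measurable and $f$-invariant (mod $\mu$): measurability follows from approximating $\chi_B$ by continuous functions and using the ergodic decomposition, while invariance follows from $\mu_{f(x)} = \mu_x$ for $\mu$-a.e.\ $x$. In particular $A = g^{-1}(0)$ is $f$-invariant modulo a $\mu$-null set. Next, because $\mu_x$ is ergodic and the set of its generic points has full $\mu_x$-measure, one has $\mu_y = \mu_x$ for $\mu_x$-a.e.\ $y$, and in particular $\mu_y(B) = \mu_x(B)$ for $\mu_x$-a.e.\ $y$. This yields the dichotomy $\mu_x(A) \in \{0,1\}$ for $\mu$-a.e.\ $x$: if $x \in A$ then $\mu_x(B) = 0$, so $y \in A$ for $\mu_x$-a.e.\ $y$, hence $\mu_x(A) = 1$; if $x \notin A$ the symmetric argument gives $\mu_x(A) = 0$.

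Armed with this dichotomy, the conclusion is a one-line check. If $x \in A$ then $\mu_x(A \cap B) \le \mu_x(B) = 0$; if $x \notin A$ then $\mu_x(A \cap B) \le \mu_x(A) = 0$. Either way $\mu_x(A \cap B) = 0$ for $\mu$-a.e.\ $x$, so the integral formula forces $\mu(A \cap B) = 0$, which is precisely the claim that $\mu_x(B) > 0$ for $\mu$-a.e.\ $x \in B$. I do not anticipate a genuine obstacle here; the only step that requires a little care is the measurability and a.e.\ $f$-invariance of $x \mapsto \mu_x(B)$ for a Borel indicator $\chi_B$, and both follow from standard monotone class reasoning combined with the ergodic decomposition properties recalled at the start of the section.
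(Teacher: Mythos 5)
Your proof is correct, but it follows a genuinely different route from the one in the paper. The paper argues by contradiction: setting $A=B\cap\{x:\mu_x(B)=0\}$, it uses the Birkhoff characterization $\mu_x(A)=\lim_n\frac1n\sum_{i<n}\chi_A(f^ix)$ to write $\{x:\mu_x(A)>0\}$ as the countable union $\bigcup_{n\ge0}f^{-n}\bigl(A\cap\{x:\mu_x(A)>0\}\bigr)$ (a point with positive asymptotic frequency of visits to $A$ must actually visit $A$, and its ergodic component is constant along the orbit); since $A\cap\{x:\mu_x(A)>0\}$ is empty by construction (as $A\subset B$ forces $\mu_x(A)\le\mu_x(B)=0$ on $A$), invariance of $\mu$ gives $\mu(\{x:\mu_x(A)>0\})=0$ and hence $\mu(A)=\int\mu_x(A)\,d\mu=0$. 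You instead derive a zero--one law $\mu_x(A)\in\{0,1\}$ from the self-consistency of the ergodic decomposition, namely that $\mu_y=\mu_x$ for $\mu_x$-a.e.\ $y$; this is a standard property (and is in fact exactly hypothesis (3) of Lemma \ref{lem:6.2} quoted later in this section, so it is available in the paper's framework), and it makes the conclusion a direct two-case computation with no contradiction. The trade-off: the paper's argument needs only the Birkhoff formula and $f$-invariance of $\mu$, while yours invokes the slightly stronger ``ergodic components are generic for themselves'' property but is shorter and more transparent. Two minor points of care in your write-up: the a.e.\ $f$-invariance of $g(x)=\mu_x(B)$ is actually not needed once you have the zero--one law, and in the dichotomy you should note that $\mu_x(\Sigma)=1$ for $\mu$-a.e.\ $x$ (which follows from $\mu(\Sigma)=1$ and the decomposition formula) so that $\mu_x$-a.e.\ $y$ with $\mu_y(B)=0$ indeed lies in your set $A\subset\Sigma$. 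Neither is a gap.
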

\begin{proof}
Take $A=B\cap \{x:\mu_x(B)=0\}$, it suffices to show $\mu(A)=0$.
By contradiction, we assume $\mu(A)>0$.
If
$
\mu\big(A\cap\{x:\mu_x(A)>0\}\big)=0,
$
it follows from the definition that
$$
\big\{x:\mu_x(A)>0\big\}=\bigcup_{n\ge 0} f^{-n}\left(A\cap\{x:\mu_x(A)>0\}\right),
$$
and thus $\mu(\{x:\mu_x(A)>0\})=0$. However, ergodic decomposition theorem tell us that
$$
\mu(A)=\int \mu_x(A)d\mu=\int_{\{x:\mu_x(A)>0\}}\mu_x(A)d\mu=0,
$$
which is a contradiction.  Hence, we must have $\mu\left(A\cap\{x:\mu_x(A)>0\}\right)>0$. As a consequence,
$$
\mu(A\cap \{x:\mu_x(B)>0\})\ge \mu(A\cap \{x:\mu_x(A)>0\})>0,
$$
it is contradict to the fact that $A\cap \{x:\mu_x(B)>0\}$ is an empty set by construction.
Therefore, we get $\mu(A)=0$.

%
\end{proof}

Recall the following abstract Lemma, see \cite[Lemma 6.2]{ABV00}. 
\begin{Lemma}\label{lem:6.2} Let $\mathcal{M}$ be a measurable space with a finite measure $\eta$.  Let $\mathcal{P}$ be a measurable partition of $\mathcal{M}$ and $\{\mathring{\eta}_P\}_{P\in{\mathcal P}}$ be the conditional measure of $\eta$ w.r.t. $\mathcal P$.

Assume that $\{\eta_x\}_{x\in \mathcal{M}}$ is a family of finite measures on $\mathcal{M}$ with the following properties:
\begin{enumerate}
\item\label{i.measure-decomposation} $x\mapsto \eta_x(A)$ is measurable for every measurable subset $A\subset \mathcal{M}$ and it is constant on each element of $\mathcal{P}$.
\item\label{i.integrable} there exists an integrable function $k: \mathcal{M}\rightarrow [0,\infty)$ such that
$$\eta(A)=\displaystyle{\int k(x)\eta_x(A)d\eta(x)}$$ for any measurable subset $A\subset \mathcal{M}$ .
\item\label{i.coarses-partition} $\{z: \eta_z=\eta_x\}$ is a measurable set with full $\eta_x$-measure, for every $x\in \mathcal{M}$.
\end{enumerate}

Then  for $\eta$-almost every $x \in \mathcal{M}$  and $\widehat{\eta_{x}}$-almost every $P$, for the conditional measures $\{\eta_{x,P}, P\in \mathcal{P} \}$ of $\eta_x$ w.r.t. $\mathcal{P}$, we have
$$
\mathring{\eta}_{P}=\eta_{x,P},$$
where $\widehat{\eta_{x}}$ denotes the quotient measure induced by $\eta_{x}$ on $\mathcal{M}$.

\end{Lemma}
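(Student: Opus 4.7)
Plan: the idea is to identify each measure $\eta_x$, up to a normalizing scalar, with the restriction of $\eta$ to the set $C_x := \{z : \eta_z = \eta_x\}$, after which the conclusion follows from the compatibility of Rokhlin disintegration with restriction and scalar multiplication.

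First, I would exploit condition (1) to note that $x \mapsto \eta_x$ is constant on each atom of $\mathcal{P}$, so each $C_x$ is automatically a union of atoms of $\mathcal{P}$, and $\{C_x\}_{x\in\mathcal{M}}$ is a (coarser) partition of $\mathcal{M}$: if $C_x \cap C_y \neq \emptyset$ then picking $w$ in the intersection gives $\eta_x = \eta_w = \eta_y$, hence $C_x = C_y$. Condition (3) says $\eta_x$ is concentrated on $C_x$; in particular for $z \notin C_x$ one has $C_z \cap C_x = \emptyset$, and therefore $\eta_z(A) = 0$ for every measurable $A \subseteq C_x$.

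Substituting this into the integral identity in condition (2) restricted to $A \subseteq C_x$ kills the contributions from $z \notin C_x$, while on $C_x$ we have $\eta_z = \eta_x$, yielding
\begin{equation*}
\eta(A) \;=\; K_x\,\eta_x(A), \qquad K_x := \int_{C_x} k\,d\eta, \qquad A \subseteq C_x.
\end{equation*}
Taking $A = C_x$ gives $\eta(C_x) = K_x\,\eta_x(\mathcal{M})$, so the set $\{x : K_x = 0\}$ is a union of classes $C_x$ each of $\eta$-measure zero, hence itself $\eta$-null. For $x$ outside this null set we obtain the key identity $\eta_x = K_x^{-1}\,\eta|_{C_x}$, presenting $\eta_x$ as a rescaled restriction of $\eta$ to a $\mathcal{P}$-saturated set.

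Since $C_x$ is $\mathcal{P}$-saturated, the Rokhlin disintegration of $\eta|_{C_x}$ along $\mathcal{P}$ is directly inherited from that of $\eta$: its conditional measures are $\mathring{\eta}_P$ for those atoms $P \subseteq C_x$, with quotient measure $\hat{\eta}$ restricted to the corresponding subset of $\mathcal{P}$. Multiplying a measure by a positive constant does not affect its conditional measures under disintegration but only rescales its quotient measure, so the disintegration of $\eta_x = K_x^{-1}\,\eta|_{C_x}$ along $\mathcal{P}$ has conditional measures $\eta_{x,P} = \mathring{\eta}_P$ for $\widehat{\eta_x}$-almost every $P$ (with quotient measure $K_x^{-1}\hat{\eta}|_{\{P \subseteq C_x\}}$), which is exactly the claimed equality. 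The main delicate point is the measurability of $x \mapsto C_x$ and $x \mapsto K_x$, needed both to treat $\{K_x = 0\}$ as a measurable null set and to legitimately apply uniqueness of disintegration inside each saturated piece; this is guaranteed by the measurability built into condition (1) together with the standard Borel ambient setting in which Rokhlin's theorem is applied throughout the paper.
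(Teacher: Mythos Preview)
The paper does not supply its own proof of this lemma: it simply quotes the statement and refers the reader to \cite[Lemma 6.2]{ABV00}. So there is no ``paper's proof'' to compare against directly.

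That said, your argument is correct and is in fact the standard route used in the cited source. The key step---deducing from (2) and (3) that $\eta_x$ agrees with $K_x^{-1}\,\eta|_{C_x}$ on the $\mathcal P$-saturated class $C_x$, and then invoking the essential uniqueness of Rokhlin disintegration under restriction to saturated sets and scalar multiplication---is exactly how the result is obtained in \cite{ABV00}. Your treatment of the exceptional set $\{K_x=0\}$ and of the measurability issues is adequate for the standard Borel setting assumed throughout; in particular the measurability of $x\mapsto K_x$ follows from condition~(1) via a countable generating family for the $\sigma$-algebra, which makes the equivalence relation $\{(x,z):\eta_x=\eta_z\}$ measurable. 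Nothing is missing.
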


\begin{proof}[Proof of Proposition \ref{pe}]
Since $\mu(L)>0$, by Lemma~\ref{fromMane}, we have that $\mu_x(L)>0$ for $\mu$-almost every $x\in L$.

For every measurable subset $A$ of $L$, by the ergodic decomposition theorem,
$$
\mu(A)=\int \mu_x(A)d\mu(x),
$$
where
$$
\mu_x(A)=\lim_{n\rightarrow+\infty}\frac{1}{n}\sum_{i=0}^{n-1}\chi_{A}(f^{i}(x)).
$$
This suggests that $\mu_x(A)>0$ only if $x$ has some iterate in $A\subset L$. By applying Poincar\'e's recurrence theorem, one can choose $r(x)$ to be the smallest positive integer such that $f^{-r(x)}(x)\in L$ for almost every $x\in L$. The construction of $r(x)$ implies the following fact:
%
\begin{Claim}
$r(x)$ is integrable on $L$, and
\begin{equation}\label{decom1}
\mu(A)=\int_{L} r(x)\mu_x(A)d\mu(x).
\end{equation}
\end{Claim}
%
Now we show that $\mu_x$ has positive Lyapunov exponents along $E$ for $\mu$-almost every $x\in L_{\infty}$. Lemma \ref{fromMane} implies that $\mu_x(L)>0$ for $\mu$-almost every $x\in L$.
Since $\mu_x$-almost every point of $L$ is Lyapunov regular, each $\gamma\in \cF$ is $\sigma$-backward contracting, we obtain that for almost every $y\in L$
$$
\lim_{n\rightarrow+\infty}\frac{1}{n}\log {\rm m}\left(Df^n|_{E(y)}\right)\footnote{Here ``{\rm m}" is the mini-norm defined by ${\rm m}(A) = \inf_{\|\nu\|=1} \|A\nu\|=\|A^{-1}\|^{-1}$ for linear isomor-
phism $A$}=\lim_{n\rightarrow+\infty}-\frac{1}{n}\log \|Df^{-n}|_{E(y)}\|\ge -\log \sigma>0.
$$
Therefore, for $\mu$-almost every $x\in L$, $\mu_x$-almost every point has positive Lyapunov exponents along $E$.
According to the ergodic decomposition theorem, with
Lemma~\ref{unstable} and expression $(\ref{decom1})$, one can apply Lemma~\ref{lem:6.2} by letting
$$\mathcal{M}=L,~\eta=\mu|_L,~\mathcal{P}=\mathcal{L},~\eta_x=\mu_x|_L,~k(x)=r(x),~~~\forall x\in L.$$
Consequently, for $\mu|_L$-almost every point $x$, the conditional measures of $\mu_x|_L$ along the disks of $\mathcal{F}$ coincides almost everywhere with the conditional measures of $\mu|_L$ along the same family of disks. Since the conditional measures of $\mu|_L$ are absolutely continuous w.r.t. Lebesgue measures on these unstable disks by assumption, we get that $\mu_x|_{L}$ has absolutely continuous conditional measures. This together with the ergodicity imply that $\mu_x$ is a Gibbs $cu$-state for $\mu$-almost every $x\in L$.

%
%

\end{proof}

\section{Proof of the main results}\label{se6}
In this section, we give the proofs of Theorem \ref{m2}, Theorem \ref{main}, Corollary \ref{cp} and some applications of our results.

\subsection{Proof of Theorem \ref{main}}

Under the settings of Theorem \ref{main}, by Theorem \ref{bac} one knows that $\mu$ has absolutely continuous conditional measures along a measurable partition $\cF$ formed by local unstable disks, all of which are $\sigma$-backward contracting for some $\sigma\in (0,1)$. Consequently, by applying Proposition \ref{pe}, there exist ergodic components of $\mu$ to be Gibbs $cu$-states.

\subsection{Proof of Theorem \ref{m2}}
On the one hand, it follows from Theorem \ref{main} that there exist  ergodic components of $\mu$ to be Gibbs $cu$-states. On the other hand, since $\mu$ has non-positive Lyapunov exponents along $F$, the ergodic decomposition theorem implies that almost every ergodic component of $\mu$ has non-positive Lyapunov exponents along $F$. Altogether one gets the desired result that there are ergodic components of $\mu$ to be SRB measures.

\subsection{Proof of Corollary \ref{cp}}
It can be deduced by Theorem \ref{m2} and absolute continuity of Pesin stable foliation. One can see \cite{y02} for more details.

\subsection{Other results}
By applying Theorem \ref{m2}, we can prove the following result. Note that it has been proved by Cowieson-Young in \cite[Theorem C]{CY05} with a different approach.

\begin{corollaryalph}\label{cy05}
Let $f$ be a $C^2$ diffeomorphism with an attractor $\Lambda$ admitting partially hyperbolic splitting $T_{\Lambda}M=E^u\oplus_{\succ} E^c \oplus_{\succ} E^s$, if ${\rm dim}E^c=1$ then there exist some SRB measures as the ergodic components of any randomly ergodic limit.
\end{corollaryalph}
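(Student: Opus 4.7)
The plan is to reduce Corollary~\ref{cy05} to Theorem~\ref{m2} by choosing the sub-bundles $E$ and $F$ according to the sign of the integrated central exponent $\lambda_{E^c}(\mu)=\int \log\|Df^{-1}|_{E^c}\|^{-1}\,d\mu$. The partial hyperbolicity allows two natural regroupings into a dominated splitting of the form $E\oplus F$: either $(E^u)\oplus(E^c\oplus E^s)$ or $(E^u\oplus E^c)\oplus(E^s)$. Since $E^u$ is uniformly expanding and $E^s$ is uniformly contracting, the only ambiguity comes from the central direction, and one of these regroupings will verify the hypotheses of Theorem~\ref{m2} in each regime.

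If $\lambda_{E^c}(\mu)>0$, I would take $E=E^u\oplus E^c$ and $F=E^s$. Under the domination $E^u\oplus_{\succ} E^c$ and in an adapted metric one gets $\lambda_E(\mu)=\lambda_{E^c}(\mu)>0$, and the uniform contraction of $E^s$ yields $\lim_n\tfrac{1}{n}\log\|Df^n|_{E^s(x)}\|\le -c<0$ at every $x\in\Lambda$. Both hypotheses of Theorem~\ref{m2} are verified and SRB ergodic components follow at once. If instead $\lambda_{E^c}(\mu)\le 0$, I would take $E=E^u$ and $F=E^c\oplus E^s$. The first hypothesis is immediate from the uniform expansion of $E^u$. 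Since $\dim E^c=1$, Birkhoff's theorem provides the pointwise central exponent $\lambda^c(x):=\lim_n\tfrac{1}{n}\log\|Df^n|_{E^c(x)}\|$ for $\mu$-a.e.\ $x$, and combined with the dominated splitting $E^c\oplus_{\succ} E^s$ and uniform contraction of $E^s$ one gets $\lim_n\tfrac{1}{n}\log\|Df^n|_{F(x)}\|=\lambda^c(x)$ for $\mu$-a.e.\ $x$. Thus the second hypothesis reduces to verifying $\lambda^c(x)\le 0$ for $\mu$-a.e.\ $x$.

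The main obstacle lies in this second regime when $\mu$ is not ergodic, since the integrated inequality $\lambda_{E^c}(\mu)\le 0$ does not by itself yield the pointwise inequality if $\lambda^c$ takes both signs on different ergodic components. To resolve this I would use the $f$-invariance of $\lambda^c$ (from Birkhoff) to split $M$ along the invariant sets $A=\{\lambda^c>0\}$ and $B=\{\lambda^c\le 0\}$. Any ergodic component $\mu_0$ of $\mu$ supported in $B$ that is simultaneously a Gibbs $u$-state is automatically SRB, because its Pesin unstable manifold is then tangent to $E^u$ alone and the Gibbs $u$-state property is precisely the required absolute continuity of conditional measures along Pesin unstable manifolds. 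Applying Theorem~\ref{main} to $\mu$ with $E=E^u$ produces a positive $\mu$-measure family of Gibbs $u$-state ergodic components, coming from the measurable partition $\mathcal{F}$ of Theorem~\ref{bac}. The final technical step is to locate among them at least one with $\lambda^c\le 0$; this amounts to showing that the union $L$ of the disks in $\mathcal{F}$ meets $B$ in positive $\mu$-measure, which should follow from balancing $\int_A\lambda^c\,d\mu$ against $\int_B\lambda^c\,d\mu$ together with the absolute continuity of $\mu$ along $\mathcal{F}$ provided by Theorem~\ref{bac}.
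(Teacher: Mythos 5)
Your first case ($\lambda_{E^c}(\mu)>0$: take $E=E^u\oplus E^c$, $F=E^s$ and apply Theorem \ref{m2}) is exactly the paper's argument. The gap is in your second case. After correctly observing that Theorem \ref{m2} with $E=E^u$, $F=E^c\oplus E^s$ is unavailable because the integrated inequality $\lambda_{E^c}(\mu)\le 0$ does not give the pointwise inequality $\lambda^c(x)\le 0$ when $\mu$ is not ergodic, you fall back on Theorem \ref{main}/Theorem \ref{bac}. But these only produce Gibbs $u$-state ergodic components for $\mu$-a.e.\ point of the set $L=\bigcup_{\gamma\in\cF}\gamma$, so you still need $\mu(L\cap B)>0$ with $B=\{\lambda^c\le 0\}$, and the proposed ``balancing'' of $\int_A\lambda^c\,d\mu$ against $\int_B\lambda^c\,d\mu$ does not deliver this. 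The sets $A$ and $B$ are invariant, while $L$ is built from a foliated chart around a single density point of the Pesin block $\cH_{\lambda_0}$ for $E^u$; nothing in that construction sees the central exponent. It is consistent with all your hypotheses that $L\subset A$ up to $\mu$-measure zero (e.g.\ $\mu=\tfrac12\mu_1+\tfrac12\mu_2$ with $\lambda^c=1$ on the support of $\mu_1$, $\lambda^c=-3$ on the support of $\mu_2$, and the chart landing inside the support of $\mu_1$). In that event the components you obtain have positive central exponent, and for such components the Gibbs $u$-state property no longer implies SRB: their Pesin unstable manifolds are tangent to $E^u\oplus E^c$, and absolute continuity along the strong unstable leaves alone is strictly weaker than what Definition \ref{Def:SRB} requires.

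The paper closes this case with a global input you did not use: every zero noise limit of a regular random perturbation is a Gibbs $u$-state (\cite[Proposition 5]{CY05}), hence \emph{almost every} ergodic component of $\mu$ is a Gibbs $u$-state (\cite[Lemma 11.13]{bdv}). Combined with $\mu(B)>0$, which does follow from $\int\lambda^c\,d\mu=\lambda_{E^c}(\mu)\le 0$, this yields a positive-measure set of ergodic components that are Gibbs $u$-states with non-positive central exponent, hence SRB. To repair your argument you would need to upgrade the localized absolute continuity coming from Theorem \ref{bac} to the statement that almost every ergodic component is a Gibbs $u$-state; the localized version on $L$ alone is not enough.
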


\begin{proof}
Consider $\mu$ as a randomly ergodic limit, since ${\rm dim}E^c=1$, there eixst following two possibilities: either $\lambda_{E^c}(\mu)\le 0$ or $\lambda_{E^c}(\mu)>0$. In the first case, since every zero noise limit of regular random perturbation of $f$ is a Gibbs $u$-state (e.g. see \cite[Proposition 5]{CY05}), in particular, the randomly ergodic limit $\mu$ is a Gibbs $u$-state, which implies that almost every ergodic components of $\mu$ is a Gibbs $u$-state \cite[Lemma 11.13]{bdv}. Since $\lambda_{E^c}(\mu)\le 0$, by using ergodic decomposition theorem one gets that there are ergodic components of $\mu$ such that they admit non-positive Lyapunov exponents along $E^c$, and thus they are SRB measures. For the later case, one can apply Theorem \ref{m2} by taking $E=E^u\oplus E^c$ and $F=E^s$ to get SRB measures from ergodic components of $\mu$.
\end{proof}

Now we provide a new proof of the existence of SRB/physical measures for systems considered in \cite{MCY17}. Recall that we say $f$ is mostly expanding(resp. contracting) along an invariant sub-bundle $E$ if all the Lyapunov exponents of every Gibbs $u$-state along $E$ are positive(resp. negative).
\begin{corollaryalph}\label{mcy}
Let $f$ be a $C^2$ diffeomorphism with an attractor $\Lambda$ admitting partially hyperbolic splitting $T_{\Lambda}M=E^u\oplus_{\succ} E^{cu} \oplus_{\succ} E^{cs}$ such that $f$ is mostly expanding along $E^{cu}$ and mostly contracting along $E^{cs}$, then there exist physical measures as the ergodic components of any randomly ergodic limit.
\end{corollaryalph}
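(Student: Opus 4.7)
The plan is to apply Corollary~\ref{cp} with the coarsened dominated splitting $T_\Lambda M = E\oplus F$, $E = E^u\oplus E^{cu}$ and $F = E^{cs}$, after first identifying the randomly ergodic limit $\mu$ as a Gibbs $u$-state of $f$. Since $E^u$ is uniformly expanding, $\lambda_{E^u}(\mu)>0$ is automatic, so Theorem~\ref{main} applied with the coarse dominated splitting $E^u\oplus_{\succ}(E^{cu}\oplus E^{cs})$ shows that $\mu$-a.e.\ ergodic component is a Gibbs $cu$-state associated to $E^u$, equivalently a classical Gibbs $u$-state; hence $\mu$ itself is a Gibbs $u$-state. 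The mostly contracting hypothesis then forces
\[
\lim_{n\to\infty}\tfrac{1}{n}\log\|Df^n|_{E^{cs}(x)}\|<0 \quad \text{for $\mu$-a.e.\ }x,
\]
which is exactly the $F$-direction hypothesis of Corollary~\ref{cp}, while the mostly expanding hypothesis combined with the uniform expansion of $E^u$ yields that every Oseledets exponent of $\mu$ along $E$ is strictly positive.

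The main obstacle is to upgrade this pointwise positivity to the integrated condition $\lambda_E(\mu) = \int\log\|Df^{-1}|_E\|^{-1}\,d\mu > 0$ demanded by Corollary~\ref{cp}. Because $\log\|Df^{-1}|_E\|^{-1}$ is only super-additive under composition, $\lambda_E(\mu)$ merely lower-bounds, rather than equals, the smallest Lyapunov exponent along $E$, so once $\dim E>1$ the pointwise spectrum being positive does not automatically force $\lambda_E(\mu)>0$. My plan is to replace $f$ by a sufficiently high iterate $f^N$: by super-additive Kingman,
\[
\tfrac{1}{N}\int \log\|Df^{-N}|_E\|^{-1}\,d\mu \;\nearrow\; \int \lambda^{\min}_E(\nu)\,d\hat\mu(\nu) \;>\; 0 \qquad (N\to\infty),
\]
where $\hat\mu$ is the ergodic decomposition of $\mu$ and $\lambda^{\min}_E(\nu)$ denotes the smallest Lyapunov exponent of the ergodic component $\nu$ along $E$. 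Hence for all $N$ sufficiently large the integrated positivity condition holds with respect to $f^N$.

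Finally, composing $N$ independent copies of the original regular random perturbation of $f$ yields a regular random perturbation of $f^N$; each $f$-ergodic stationary measure $\mu_\varepsilon$ that accumulates on $\mu$ is still stationary under this induced noise, and after decomposing it into $f^N$-ergodic components and extracting weak-$*$ subsequential limits one obtains some $f^N$-ergodic component $\mu_\ast$ of $\mu$ that is a randomly ergodic limit of $f^N$. Since $\mu_\ast$ inherits (up to scaling by $N$) the Lyapunov spectrum of $\mu$ along $E^u\oplus E^{cu}\oplus E^{cs}$, both hypotheses of Corollary~\ref{cp} apply to $\mu_\ast$ with respect to $f^N$, producing physical measures of $f^N$ among the $f^N$-ergodic components of $\mu_\ast$. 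Averaging any such physical measure over its $f$-cyclic orbit (of period dividing $N$) then yields an $f$-invariant physical measure of $f$, appearing as an ergodic component of the original $\mu$.
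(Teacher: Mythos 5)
Your overall route is the same as the paper's: identify the randomly ergodic limit as a Gibbs $u$-state, pass to an iterate $f^N$ to convert pointwise positivity of exponents along $E=E^u\oplus E^{cu}$ into the integrated condition, apply the main theorem to the $f^N$-perturbation, and average back over the $f$-orbit. However, two steps do not go through as written. First, Theorem~\ref{main} applied to the splitting $E^u\oplus_{\succ}(E^{cu}\oplus E^{cs})$ only produces \emph{some} ergodic components of $\mu$ that are Gibbs $cu$-states associated to $E^u$; it does not give that $\mu$-a.e.\ ergodic component is one, so you cannot conclude from it that $\mu$ itself is a Gibbs $u$-state. Since the mostly expanding/contracting hypotheses are statements about Gibbs $u$-states, you need the full-strength fact that every zero-noise limit of a regular random perturbation is a Gibbs $u$-state (the paper cites \cite[Proposition 5]{CY05} together with \cite[Lemma 11.13]{bdv} for the ergodic components); this is a standard fact and the fix is just to invoke it, but your stated derivation is not sufficient.

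The second gap is more serious and is internal to your own argument. Your Kingman/super-additivity step yields $\int\log\|Df^{-N}|_E\|^{-1}\,{\rm d}\mu>0$ for the \emph{given} measure $\mu$ and large $N$. But the measure to which you ultimately apply Corollary~\ref{cp} (for $f^N$) is a randomly ergodic limit $\mu_\ast$ of the induced $f^N$-perturbation, and the hypothesis you must verify is $\int\log\|Df^{-N}|_E\|^{-1}\,{\rm d}\mu_\ast>0$. You justify this by saying $\mu_\ast$ ``inherits the Lyapunov spectrum'' of $\mu$ --- but positivity of the Lyapunov spectrum along $E$ does \emph{not} imply positivity of this integral; that is precisely the obstacle you yourself identified at the outset, and it reappears untouched for $\mu_\ast$. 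Moreover, a weak-$\ast$ limit of $f^N$-ergodic components of the stationary measures need not be an $f^N$-ergodic component of $\mu$, so even the positivity of the integral against $\mu$ cannot be transferred to $\mu_\ast$ by averaging considerations. The paper sidesteps this by choosing $N$ via compactness of the set of Gibbs $u$-states (\cite[Proposition 3.3]{MCY17}) so that the integrated positivity along $E^{cu}$ holds \emph{uniformly for every} Gibbs $u$-state of $f^N$; then whatever Gibbs $u$-state arises as the randomly ergodic limit of the $f^N$-perturbation automatically satisfies the hypothesis of Theorem~\ref{main}. You should replace your measure-specific Kingman argument by this uniform statement (or prove such uniformity yourself), otherwise the final application of Corollary~\ref{cp} is unjustified.
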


\begin{proof}
By using the compactness of the set of Gibbs $u$-states, we know that there exists $N\in \NN$ such that
\begin{equation}\label{N}
\int \log \|Df^{-N}|_{E^{cu}}\|{\rm d}\nu>0
\end{equation}
for any Gibbs $u$-state $\nu$ of $f^N$, see \cite[Proposition 3.3]{MCY17} for details. By adding the regular random perturbation of $f^N$, if we take $\mu$ as a randomly ergodic limit, then it is a Gibbs $u$-state, and thus $\mu$ satisfies (\ref{N}). Then applying Theorem \ref{main} we conclude that there is an ergodic component $\eta$ of $\mu$ to be a Gibbs $cu$-state for $f^N$. Since the assumption that $f$ is mostly contracting along $E^{cs}$ implies that $f^N$ is mostly contracting along $E^{cs}$ \cite[Lemma 2.4]{MCY17},  we get that $\eta$ has negative Lyapunov exponents along $E^{cs}$. Hence, the absolute continuity of Pesin stable foliation ensures that $\eta$ is a physical measure for $f^N$. Taking $\xi=\frac{1}{N}\sum_{i=0}^{N-1}f_{\ast}^i\eta$, one can check that $\xi$ is a physical measure for $f$ immediately.
\end{proof}

We emphasize that the partially hyperbolic splittings considered in \cite{MCY17}(also for \cite{AV15}) occurs only on $M$, can not generalized to attractors due to the technique they used there. Moreover,
after showing the existence of physical measures, by more accurate arguments(see the proof of \cite[Theorem 5.1]{MCY17}) one can obtain the finiteness of these measures, i.e., there exist at most finitely many ergodic Gibbs $cu$-states for $f$, which are physical measures.

\appendix
\section{Proof of Lemma \ref{hee}}\label{se7}
\begin{proof}[A sketched proof]
Since $T_{\Lambda}M=E \oplus_{\succ}F$ is dominated, there exists constant $\tau\in (0,1)$ such that
\begin{equation}\label{rd}
\|Df_{\underline{\omega}}^1|_{F(\underline{\omega},x)}\| \cdot \|(Df_{\underline{\omega}}^1|_{E(\underline{\omega},x)})^{-1}\| \le \tau
\end{equation}
for every $(\underline{\omega},x)\in \Lambda_{\varepsilon_0}$. As a consequence of \ref{rd}, for any two subspaces different to random $F$ sub-bundle, the distance of their forward images under $Df_{\underline{\omega}}^n$ are contracted exponentially with rate $\tau$.
Now we take smooth distribution $\widehat{E}$ close to the random sub-bundle $E$, as smoothness implies Lipschitz, one can choose constants $\widehat{C}>0$ such that $\rho(\widehat{E}_x, \widehat{E}_y)\le \widehat{C}d(x,y)$ for any $x,y\in \PP_M(\Lambda_{\varepsilon_0})$. Therefore, we have the following estimations:
\begin{align*}
&\quad \rho\left(Df_{\sigma^{-n}\underline{\omega}}^n\widehat{E}(G^{-n}(\underline{\omega},y)),Df_{\sigma^{-n}\underline{\omega}}^n\widehat{E}(G^{-n}(\underline{\omega},z))\right)\\
&\quad \quad \quad \quad \quad\quad \le \tau^n \rho\left(\widehat{E}(G^{-n}(\underline{\omega},y)), \widehat{E}(G^{-n}(\underline{\omega},z))\right)\\
& \quad \quad\quad\quad\quad\quad \le \tau^n \widehat{C}d\left(f_{\underline{\omega}}^{-n}(y),f_{\underline{\omega}}^{-n}(z)\right)\\
& \quad \quad\quad\quad\quad\quad \le \tau^n \widehat{C}C\lambda^n d(y,z)\\
& \quad \quad\quad\quad\quad\quad \le \widehat{C}C(\tau \lambda)^n d(y,z)\\
& \quad \quad\quad\quad\quad\quad \le \widehat{C}Cd(y,z).
\end{align*}
Moreover, by property (\ref{rd}) one knows that $Df_{\sigma^{-n}\underline{\omega}}^n\widehat{E}(G^{-n}(\underline{\omega},y))$ converges to $E(\underline{\omega},y)$, and $Df_{\sigma^{-n}\underline{\omega}}^n\widehat{E}(G^{-n}(\underline{\omega},z))$ converges to $E(\underline{\omega},z)$. Hence, by taking $n\to +\infty$ to the left hand of the above estimation we get
$$
\rho(T_y\gamma,T_z\gamma)=
\rho (E(\underline{\omega},y), E(\underline{\omega},z))\le C\widehat{C}d(x,y).
$$
Thus, we complete the proof of this lemma as long as we put $L_0=C\widehat{C}$.
\end{proof}

\vskip 5pt

\noindent Zeya Mi

\noindent School of Mathematics and Statistics

\noindent Nanjing University of Information Science and Technology, Nanjing 210044, China

\noindent mizeya@163.com

%
%
%
%
%

\end{document}